 \theoremstyle{plain}    
 \newtheorem{thm}{Theorem}[section]
 \numberwithin{equation}{section} 
 \numberwithin{figure}{section} 
 \theoremstyle{plain}
 \theoremstyle{definition}
  \newtheorem{problem}[thm]{Problem}
 \theoremstyle{plain}    
 \theoremstyle{remark}
 \newtheorem{rem}[thm]{Remark}
 \theoremstyle{remark}    
 \theoremstyle{plain}    
 \theoremstyle{plain}    
 \newtheorem{lem}[thm]{Lemma} 
 \theoremstyle{definition}
 \newtheorem{defn}[thm]{Definition}
 \theoremstyle{plain}    
 \theoremstyle{plain}    
 \newtheorem{prop}[thm]{Proposition} 
\def\N{\mathbb {N}}
\def\Z{\mathbb {Z}}
\def\Q{\mathbb {Q}}
\def\R{\mathbb {R}}
\def\C{\mathbb {C}}
\def\isom{\simeq}
\def\bs{\backslash}
\def\supp{\qopname\relax o{supp}}
\newcommand{\order}{\mathcal{O}}
\DeclareMathOperator{\Gal}{Gal}
\def\Hom{\qopname\relax o{Hom}}
\renewcommand{\H}{\mathbf{H}}
\newcommand\vol{\mathrm{vol}}
\newcommand\abs[1]{\left|{#1}\right|}
\newcommand\dist{\mathrm{dist}}
\newcommand\distG{\dist_G}
\newcommand\xhookrightarrow[2][]{\ext@arrow 0062{\hookrightarrowfill@}{#1}{#2}}
\def\hookrightarrowfill@{\arrowfill@\lhook\relbar\rightarrow}
\newcommand\SL{\mathrm{SL}}
\newcommand\GL{\mathrm{GL}}
\newcommand\PGL{\mathrm{PGL}}
\newcommand\Gtwid{\tilde{\mathbf{G}}}
\newcommand{\adele}{\mathbb{A}}
\newcommand{\cmpct}{E}
\newcommand\PO{\mathrm{PO}}
\newcommand{\temp}{\mathrm{temp}}
\newcommand{\tube}{\mathcal{T}}
\def\cf{cf.\ }
\def\ie{i.e.\ }
\def\wrt{w.r.t.\ }
\newcommand{\D}{C^{\infty}_{\mathrm{c}}(X)}
\newcommand{\CX}{C^{\infty}(X)}
\newcommand\G{\mathbf{G}}
\newcommand\T{\mathbf{T}}
\newcommand{\barxB}{\overline{xB}}
\newcommand{\Hecke}{\mathcal{H}}
\newcommand\Af{\mathbb{A}_\mathrm{f}}
\newcommand{\GA}{\mathbf{G}(\mathbb{A})}
\newcommand{\GAf}{\mathbf{G}(\Af)}
\newcommand{\GQ}{\mathbf{G}(\Q)}
\newcommand{\Kf}{K_\mathrm{f}}
\newcommand{\xf}{x_\mathrm{f}}
\newcommand{\Ominf}{\Omega_\infty}
\newcommand{\denom}{\mathrm{d}}
\newcommand{\denp}{\widetilde{\denom}}
\begin{document}

\title[Entropy bounds\ldots]{Entropy bounds and quantum unique ergodicity for Hecke eigenfunctions on division algebras}

\author[L. Silberman]{Lior Silberman}
\address{Lior Silberman\\
  Department of Mathematics\\
  University of British Columbia\\
  Vancouver  BC  V6T 1Z2\\
  Canada.
}
\email{lior@math.ubc.edu}

\author[A. Venkatesh]{Akshay Venkatesh}
\address{Akshay Venkatesh\\
  Department of Mathematics\\
  Stanford University\\
  Stanford, CA, 94035\\
  USA.
}
\email{akshay@math.stanford.edu}

\begin{abstract}
We prove the arithmetic quantum unique ergodicity (AQUE) conjecture for
non-degenerate sequences of Hecke eigenfunctions on quotients
$\Gamma \backslash G/K$,  where $G\isom\PGL_{d}(\R)$, $K$
is a maximal compact subgroup of $G$ and $\Gamma<G$ is a lattice
associated to a division algebra over $\Q$ of prime degree $d$. 

More generally, we introduce a new method of proving positive entropy
of quantum limits, which applies to higher-rank groups.
The result on AQUE is obtained by combining this with a
measure-rigidity theorem due to Einsiedler-Katok,
following a strategy first pioneered by Lindenstrauss.
\end{abstract}

\maketitle

\tableofcontents

\section{Introduction}

\subsection{Result}
In this paper, we shall show (a slightly sharper version of)
the following statement.  For precise definitions we
refer to \S \ref{sec:results} especially \S \ref{finalresults}.
\begin{thm}\label{thm:intro}
Let $\Gamma$ be a lattice in $\PGL_d(\R)$, with $d$ prime,
associated to a division algebra\footnote{
  This means that $\Gamma$ is the image of $\order^{\times}$
  in $\PGL_d(\R)$, where $\order$ is an order in a $\Q$-division algebra
  so that $\order \otimes \R = M_d(\R)$. We also impose a class number one
  condition, see \S \ref{sec:results}.},
and $\psi_i$ a non-degenerate sequence of
Hecke-Maass eigenfunctions on
$Y := \Gamma \backslash \PGL_d(\R)/\PO_d(\R)$, 
normalized to have $L^2$-norm $1$ \wrt the Riemannian volume $d\vol$.
 
Then the measures $\abs{\psi_i}^2 d\vol$
converge weakly to the Haar measure, \ie for any $f\in C(Y)$,
\begin{equation} \label{weakconvergence}
\lim_{i \rightarrow \infty} \int_{Y} \abs{\psi_i}^2 f  d\vol =
\int_{Y} f d\vol.\end{equation} 
\end{thm}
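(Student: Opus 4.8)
The plan is to follow the by-now classical microlocal lift plus measure rigidity strategy of Lindenstrauss, adapted to the higher-rank setting $G = \PGL_d(\R)$. First I would attach to the sequence $\psi_i$ a sequence of distributions on $X := \Gamma\backslash G$ (the microlocal lifts): pick a weak-$*$ limit $\mu$ on $X$ of the measures $|\psi_i|^2\,d\vol$ lifted to $X$, together with the associated limit of the Wigner distributions. The standard semiclassical argument shows that $\mu$ is a probability measure invariant under the full Cartan subgroup $A$ (the geodesic flow is replaced here by the $\R^{d-1}$-action of $A$), and that it is a quantum limit in the sense required by the Einsiedler--Katok rigidity theorem. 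So the problem reduces to two independent inputs: (i) showing $\mu$ is $A$-invariant and carries the extra structure (Hecke-recurrence / the relevant $S$-arithmetic invariance) needed to invoke rigidity, and (ii) showing that $\mu$ has positive entropy with respect to every one-parameter subgroup of $A$, or more precisely the positivity hypothesis in the Einsiedler--Katok theorem. Granting (ii), the Einsiedler--Katok measure classification forces $\mu$ to be the Haar measure on $X$, and pushing forward to $Y$ gives \eqref{weakconvergence}; the non-degeneracy hypothesis is exactly what is needed to rule out escape of mass and to ensure the limit is a probability measure and not supported on a lower-dimensional piece.

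The genuinely new part, and the one the abstract flags, is step (ii): proving positive entropy of the quantum limit for a higher-rank group, where the classical microlocal-lift argument of Lindenstrauss (which uses the geodesic flow on a rank-one space and amplification at a single place) does not directly apply. My approach would be to exploit the Hecke operators at a prime $p$: since the division algebra splits at $p$, $\Gamma$ sits inside an $S$-arithmetic lattice and the $\psi_i$, being Hecke eigenfunctions, give rise to eigenfunctions on $\Gamma\backslash G\times \PGL_d(\Q_p)/K_p$. The key estimate to establish is an $L^2$ (or $L^\infty$) bound showing that the mass of $|\psi_i|^2$ cannot concentrate too sharply on small tubes around intermediate-dimensional subvarieties — equivalently, a quantitative lower bound on the entropy of $\mu$ along each coordinate direction of $A$. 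Concretely I would run an amplification argument with the $p$-adic Hecke algebra: construct a suitable convolution operator built from Hecke operators at $p$ (a combinatorial ball in the Bruhat--Tits building of $\PGL_d(\Q_p)$), use that $\psi_i$ is a simultaneous eigenfunction to turn the operator norm into a multiplicative bound on eigenvalues, and combine this with a geometric counting statement — the number of lattice points / Hecke returns in a box — to deduce that any $A$-invariant limit measure must spread its entropy across all Lyapunov directions. This is where the primality of $d$ is used: it guarantees that the centralizer structure and the possible intermediate subgroups are tightly constrained, so that the only way entropy can fail to be positive in some direction is ruled out.

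The main obstacle, I expect, is precisely making this high-rank positive-entropy argument quantitative and uniform in $i$. In rank one, positive entropy of the microlocal lift is essentially automatic from the fact that an eigenfunction cannot concentrate on a point; in higher rank one must prevent concentration on each of a whole family of intermediate horospherical (or unipotent) directions, and the naive microlocal argument gives control only in the "full" direction. The technical heart will be choosing the Hecke amplifier and the associated combinatorial geometry in the building so that the resulting dispersion estimate sees every simple-root direction; once that is in place, translating the estimate into a statement about the conditional measures of $\mu$ along the unstable leaves — and hence a lower bound on $h_\mu(a)$ for every regular $a\in A$ — should be a matter of the standard entropy-from-recurrence machinery (Ledrappier--Young type inequalities, as used by Einsiedler--Katok--Lindenstrauss). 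Finally, I would need to verify the hypotheses of the Einsiedler--Katok theorem exactly — $A$-invariance, ergodicity of $A$-ergodic components, and that each component has positive entropy — and check that the class-number-one assumption in \S\ref{sec:results} lets one work on a single adelic quotient rather than a disjoint union, after which Haar measure is the only possibility and \eqref{weakconvergence} follows.
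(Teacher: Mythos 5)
Your overall architecture (microlocal lift on $X=\Gamma\backslash G$, positive entropy of the $A$-invariant limit, then Einsiedler--Katok measure rigidity) is exactly the strategy of the paper, and your identification of step (ii) as the new content is right. But your sketch of step (ii) has a genuine gap precisely where the paper's work lies. The difficulty is not merely ``making the amplification quantitative'': it is that when you disperse the mass of a small tube $xB(C,\varepsilon)$ through a Hecke operator, the translated tubes $x_jB(C,\varepsilon)$ need not be disjoint, and returns of the Hecke correspondence can destroy the estimate entirely. Your proposal never confronts this. The paper's solution has two ingredients you are missing: first, a covering/Cauchy--Schwarz argument (Lemma \ref{cover}) that replaces the worst-case intersection multiplicity by an \emph{average} intersection multiplicity; second, a diophantine argument (Lemmas \ref{diophantine1} and \ref{diophantine2}, Propositions \ref{propertystar1}--\ref{propertystar2}) showing that all rational elements of bounded denominator lying $\varepsilon$-close to the real Levi subgroup $Z_G(a)$ must lie on a single $\Q$-subtorus $\T$ -- this is where primality of $d$ enters, since proper $\Q$-subalgebras of a prime-degree division algebra are fields. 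Only then can one count intersections: they are governed by $\T(\Q_p)/\T(\Z_p)$, which has polynomial growth, and summing the amplifier over \emph{many} primes $p\in[Q/2,Q]$ (avoiding the $O(\log\varepsilon^{-1})$ primes bad for $\T$) wins. Your plan to amplify at a \emph{single} prime $p$ using a combinatorial ball in the building is explicitly flagged in the paper as an open problem in higher rank (it is known only for hyperbolic surfaces, by Brooks--Lindenstrauss), so as written your step (ii) would not close.

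Two smaller inaccuracies: the non-degeneracy hypothesis is not about escape of mass (here $X$ is compact since $D$ is a division algebra); it is needed so that the microlocal lift is invariant under all of $A$ rather than only a singular subgroup. And positive entropy is not ``essentially automatic'' in rank one -- the trivial bound from $A$-invariance gives exactly the zero-entropy exponent $\varepsilon^{d-1}$, and the whole point, already in Bourgain--Lindenstrauss, is to beat it by a power $\varepsilon^{\delta}$ using the Hecke structure.
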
 
 
In words, Theorem \ref{thm:intro} asserts that the eigenfunctions
$\psi_i$ become \emph{equidistributed} -- that they do not
cluster too much on the manifold $Y$. 
 
This theorem is a contribution to the study of the
``Arithmetic Quantum Unique Ergodicity'' problem.
A detailed introduction to this problem may be found in our
paper \cite{SilbermanVenkatesh:SQUE_Lift}.
While it is hard to dispute that the spaces $Y$ are far too special
to provide a reasonable model for the physical problem of
``quantum chaos,'' both the statement \eqref{weakconvergence}
and the techniques we use to prove it seem to the authors
to be of interest because of the scarcity of results concerning
analysis of higher rank automorphic forms.  In particular, we believe
our techniques will find applications to analytic problems beside QUE.

Our strategy follows that of Lindenstrauss in his proof of the
arithmetic QUE conjecture for quotients of the hyperbolic plane
(the case $d=2$ of the Theorem above)
and has three conceptually distinct steps.
Let $A \subset \PGL_d(\R)$ be the subgroup of diagonal matrices. 

\begin{enumerate}
\item \label{microlocal} {\em Microlocal lift:}
notation as above, any weak limit (as $i \rightarrow \infty)$
of $\abs{\psi_i}^2 d\vol$ may be lifted to an $A$-invariant
measure $\sigma_{\infty}$ on $X := \Gamma \backslash \PGL_d(\R)$,
in a way compatible with the Hecke correspondence.

\item \label{tubes} {\em Mass of small tubes:}
If $\sigma_{\infty}$ is as in (\ref{microlocal}), then the
$\sigma_\infty$-mass of an $\epsilon$-ball in
$\Gamma \backslash \PGL_d(\R)$ is
$\ll \epsilon^{d-1 + \delta}$, for some $\delta > 0$; note that
the bound $\ll \epsilon^{d-1 }$ is trivial from the $A$-invariance.
\footnote{This asserts, then, that $\sigma_{\infty}$ has some
  ``thickness'' transverse to the $A$-direction; for instance,
  it immediately implies that the dimension of the support of
  $\sigma_{\infty}$ is strictly larger than $d-1$.}

\item  \label{measurerigidity} {\em Measure rigidity:}
Any $A$-invariant measure satisfying the auxiliary condition prescribed
by (\ref{tubes}) must necessarily be a convex combination of
algebraic measures.  In our setting
($\Gamma$ associated to a division algebra of prime degree)
this means it must be Haar measure.
\end{enumerate}

\begin{center}
* * *
\end{center}

In the context of Lindenstrauss' proof, the analogues of steps
\ref{tubes} and \ref{measurerigidity} are due,
respectively, to Bourgain--Lindenstrauss \cite{LindenstraussBourgain:SL2_Ent}
and Lindenstrauss \cite{Lindenstrauss:SL2_QUE}.  The analouge of
step \ref{microlocal} is due to \cite{Lindenstrauss:HH_QUE} (based
on constructions of Schnirel'man \cite{Schnirelman:Avg_QUE},
Zelditch \cite {Zelditch:SL2_Lift_QE,Zelditch:SL2_Lift_A_inv},
Colin de Verdi\'ere \cite{CdV:Avg_QUE} 
and Wolpert \cite{Wolpert:SL2_Lift_Fejer}).

We shall concern ourselves with the higher rank case
($d>2$), where step \ref{microlocal} -- under a nondegeneracy condition --
has been established by the authors in \cite{SilbermanVenkatesh:SQUE_Lift},
while step \ref{measurerigidity} was established by
Einsiedler-Katok-Lindenstrauss in \cite{EKL:SLn_Rigid}.

The contribution of the present paper is then the establishment of
step \ref{tubes}. 

\subsection{Bounding mass of tubes -- vague discussion}
As was discussed in the previous Section,
the main point of the present paper is to prove upper
bounds for the mass of eigenfunctions in small tubes. 

A \emph{correspondence} on a manifold $X$, for our purposes,
will be a subset $S \subset X \times X$
such that both projections are topological coverings. Such a correspondence
induces an endomorphism of $L^2(X)$: pull back to $Y$ and push forward to $X$.
We also think of a correspondence as a ``multi-valued'' or ``set-valued''
function $h_S$ from $X$ to $X$.  In the latter view a correspondence
induces a natural convolution action on functions on $X$,
given by $(f*h_S)(x) = \sum_{y\in h_{S}(x)} f(y)$.

Two correspondences can be composed in a natural way and resulting
algebra is, in general, non-commutative.  However, the manifolds
of interest to us ($X = \Gamma \backslash \G$ with $\Gamma$
an arithmetic lattice in the semisimple Lie group $G$)
come equipped with a large algebra of \emph{commuting} correspondences,
the Hecke algebra $\mathcal{H}$, which acts on $L^2(X)$ by normal operators.
We will be interested in
possible concentration of simultaneous eigenfunctions of the Hecke algebra.

As a concrete example, for $X = \PGL_d(\Z) \backslash \PGL_d(\R)$
the Hecke correspondences are induced by \emph{left} multiplication
with $\PGL_d(\Q)$: given $\gamma\in\PGL_d(\Q)$ and a coset $x\in X$,
we consider the set of products $\gamma g$ as $g$ varies over
representatives in $\PGL_d(\R)$ for $x$.  It turns out that these
products generate a finite set of cosets $h_\gamma(x)\subset X$.
It is easy to check that the adjoint of $h_\gamma$ is $h_{\gamma^{-1}}$,
but the commutativity of the Hecke algebra is more subtle.
An important feature of the Hecke correspondences on $X$ is their
equivariance \wrt the action of $G = \PGL_d(\R)$ on $X$ on the right.

Returning to the general $X := \Gamma \backslash G$,
let $\tube(\epsilon)$ be a small subset
of $G$, with its size in certain directions on the order of $\epsilon$
(for us $\tube(\epsilon)$ will be a tube of width $\epsilon$ around
a compact piece of a Levi subgroup of $G$).
Our goal will be to prove a statement of the following type, for some
fixed $\eta > 0$ depending only on $G$:
\begin{equation} \label{desideratum}
\mbox{ Each $\mathcal{H}$-eigenfunction $\psi\in L^{2}(X)$
satisfies $\mu_{\psi}(x \tube(\epsilon))\ll \epsilon^{\eta}$.}
\end{equation}
Here $\mu_{\psi} := \abs{\psi}^2 d\vol$ is the product of the
$\PGL_d(\R)$-invariant measure and the function $\abs{\psi}^2$,
normalized to be a probability measure.  \eqref{desideratum} asserts
that the eigenfunction $\psi$ cannot concentrate on a small tube.
This is proven, in the cases of interest for this paper,
in Theorem \ref{thm:tubemass}. 

We will sketch here our approach to the proof.  A basic form of the idea
appeared in the paper \cite{RudnickSarnak:Conj_QUE} of Rudnick and Sarnak.
If  $\psi$ is an eigenfunction of a correspondence
$h \in \Hecke$, and $\psi$ were large at some point $x$,
it also tends to be quite large at points belonging to the orbit
$h.x$.  We can thereby ``disperse'' the local question of bounding
the mass of a small tube to a global question about the size of $\psi$
throughout the manifold.

Say that the image of the point $x$ under $h \in \Hecke$ is the
collection of $N$ points $h.x = \{x_i\}$.  Equivariance implies that the
image of the tube $x \tube(\epsilon)$ under $h$ is the collection
of tubes $\left\{x_i \tube(\epsilon)\right\}$.
For any $t \in \tube(\epsilon)$, we have, then
$$ \lambda_h \psi(xt) =  \sum_{i=1}^N \psi(x_i t), $$
where $\lambda_h$ is so that $h . \psi = \lambda_h \psi$. 

Squaring, applying Cauchy--Schwarz and integrating over
$t \in \tube(\epsilon)$ gives:
\begin{eqnarray}\label{heckecontrol}
\mu_{\psi}(x \tube(\epsilon)) & \leq & \nonumber
\frac{N}{\abs{\lambda_h}^2}  \sum_{i=1}^N \mu_{\psi} (x_i  \tube(\epsilon)) \\
& \leq & \frac{N}{\abs{\lambda_h}^2} \max_i \#
\left\{j \vert x_i\tube(\epsilon)\cap x_j\tube(\epsilon) \neq \emptyset\right\}.
\end{eqnarray}

If the tubes $x_i \tube(\epsilon)$ are \emph{disjoint}\footnote{
  It suffices for the number of tubes intersecting a given one to
  be uniformly bounded independenly of $\epsilon$.}
and, furthermore, $|\lambda_h|$ is ``large'' (\wrt $N$),
\eqref{heckecontrol} yields a good upper bound for
$\mu_{\psi}(x \tube(\epsilon))$.

The issue of choosing $h$ so that $\lambda_h$ is ``large''
turns out to be relatively minor. The solution is given in the appendix.

The more serious problem is that the tubes $x_i \tube(\epsilon)$
might not be disjoint.  It must be emphasized that this issue is
not a technical artifact of the proof but related fundamentally
to the analytic properties of eigenfunctions on arithmetic locally
symmetric spaces: ``returns'' of the Hecke correspondence influece
the sizes of eigenfunctions.  For an instance of this phenomenon see
the Rudnick--Sarnak example of a sequence of eigenfunctions on a
hyperbolic $3$-manifold with large $L^{\infty}$-norms and the more recent
work of Milicevic \cite{Milicevic:LargeValuesH2,Milicevic:LargeValuesH3}.

Our approach to this difficulty is as follows:  we prove a variant
of \eqref{heckecontrol} where the ``worst-case'' intersection number
(i.e.\ $\max_j$) is replaced by an \emph{average} intersection number
(an average over $j$). This variant is presented in Lemma \ref{cover}.
The rest of the paper is then devoted to giving upper bounds for this
average intersection number, which turns out to be much easier than
bounding the worst-case intersection number. 

\begin{rem}
Let us contrast this approach to prior work.
An alternate idea would be to choose a subset of translates $\{x_i\}$
for which we can prove an analogue of \eqref{heckecontrol} and such that
the tubes $x_i \tube(\epsilon)$ are disjoint.  Versions of this were
used in the prior work of Rudnick--Sarnak and Bourgain--Lindenstrauss,
with the quantitative version of Bourgain--Lindenstrauss requiring a
sieving argument to find non-intersecting correspondences.
Our original proof was based on a further refinement of this technique,
which avoided sieves entirely by using some geometry of buildings.
A presentation of that proof may be found in the PhD thesis of
the first author, \cite{Silberman:Thesis}.
The technique of this paper seems to us to be yet more streamlined. 
\end{rem}

\begin{rem}
In order to ``disperse'' the eigenfunction we require the use of Hecke
operators at many primes.  The recent work \cite{BrooksLindenstrauss:OneHecke}
shows that in the case of hyperbolic surfaces, it suffices to use the Hecke
operators at a single place.  Generalizing that result to higher rank would
be an interesting problem.
\end{rem}

\subsection{Spectrum of quotients. Significance of division algebras.}

More generally, the second technique can be interpreted as an implementation
of the following philosophy, related to the work of Burger and Sarnak:

\emph{The analytic behavior of Hecke eigenfunctions on $\Gamma \backslash G$
along orbits of a subgroup $H \subset G$
is controlled by  the spectrum of quotients $L^2(G_p/H_p)$.} 

Here $G_p$ is the $p$-adic group corresponding to $G$, and $H_p \subset G_p$
is a $p$-adic Lie subgroup ``corresponding'' to $H$ in a suitable sense.
In the main situation of this paper, $G_p = \PGL_d(\Q_p)$
for almost all $p$, $H$ will be a real Levi subgroup, and $H_p$ will be a torus. 

In this context, the possibilities for the subgroup $H_p$ that can occur
are closely related to the $\Q$-structure of the group underlying $G$.
In general, the fewer $\Q$-subgroups $\G$ has, the fewer the
possibilities for $H_p$.  For this reason we can only reach Theorem
\ref{thm:intro} for quotients $\Gamma \backslash G$ arising from division
algebras of prime rank: the corresponding $\Q$-groups have
very few subgroups. As one passes to general $\Gamma \backslash G$,
the possibilities for $H_p$ become wilder, and eventually the methods
of this paper do not seem to give much information. 

\subsection{Organization of this paper}
In Section \ref{sec:notation} we describe our setup in the general
setting of algebraic groups.  Further notation regarding our special
case of division algebras is discussed in Section \ref{extranotations}.

Section \ref{sec:tubemass} contains the derivation of our first technical
result, Lemma \ref{cover}, giving a bound for the integral on a small set
of the squared modulus of an eigenfunction of an equivariant correspondence.
This is a version of \eqref{heckecontrol} which can be used for non-disjoint
translates.  The bound we obtain depends on the average multiplicity of
intersection among the translates of the tube as well as on covering properties
of the tubes (easily understood in natural applications).

In Section \ref{sec:dioph} we define the kind of tubes we shall be
interested in and study the intersection patterns of their translates
by elements of the Hecke algebra.  We give two treatments of the analysis,
one that is applicable to general $\R$-split groups, and another,
more concrete, specific to division algebras of prime degree.
In both cases we use a diophantine argument to
show that under suitable hypotheses the intersection pattern is
controlled by a torus in the underlying $\Q$-algebraic group. 

Section \ref{sec:tubemass2} then obtains the desired power-law decay
of the mass of small tubes.  The considerations of this section
are again fairly general.

Finally in Section \ref{sec:results} we recall our previous result
(``step \ref{microlocal}'' of the strategy) and prove our main Theorems.

\subsection{Acknowledgements}
This paper owes a tremendous debt both to Peter Sarnak and Elon Lindenstrauss.
It was Sarnak's realization, developed throughout the 1990s, that the
quantum unique ergodicity problem on arithmetic quotients was a question that
had interesting structure and interesting links to the theory of $L$-functions;
it was Lindenstrauss' paper \cite{Lindenstrauss:SL2_QUE} which introduced
ergodic-theoretic methods in a decisive way.
Peter and Elon have both given us many ideas and comments
over the course of this work, and it is a pleasure to thank them.

The second author was supported by a Clay Research Fellowship,
and NSF Grant DMS--0245606.

Both authors were partly supported by NSF Grant DMS--0111298;
they would also like to thank the Institute for Advanced Study for providing
superb working conditions.

\section{Notation} \label{sec:notation}
We shall specify here the ``general'' notation to be used throughout the paper. 
Later sections (after Section \ref{sec:tubemass}) will use, in addition to
these notations, certain further setup about division algebras. This will
be explained in \S \ref{extranotations}.

Let $\G$ be a semisimple group over $\Q$.
We choose an embedding $\rho: \G \rightarrow \SL_n$. 
Let $G = \mathbf{G}(\R)$, 
$G = NAK$ a Cartan decomposition.  Set $\G(\Z_p) = \rho^{-1}(\SL_n(\Z_p))$,
and let $\Kf \subset \GAf$ be an open compact subgroup contained in
$\prod_{p} \G(\Z_p)$.
We set $X = \GQ \backslash \GA/\Kf, Y = X/K$.
We let $d\vol$ be the natural probability measures on either $X$ or $Y$:
in both cases, the projection of the $\GA$-invariant probability
measure on $\GQ \backslash \GA$.  We shall sometimes denote $d\vol_X$ by
$dx$ and $d\vol_Y$ by $dy$.  We also normalize the Haar measure on $K$ to
be a probability measure.

For any subset $B \subset G$ or $B \subset \G(\adele)$,
we denote by $\overline{B}$ the image of $B$ in $X$ under the
natural projections $G \rightarrow X$ and $G(\adele) \rightarrow X$. 

We say that a prime $p$ is \emph{good} if $\G(\Q_p)$ is unramified
and $K_p = \G(\Z_p) \subset \G(\Q_p)$ is a good maximal compact
subgroup which is contained in $\Kf$
(via the natural embedding $\G(\Q_p) \hookrightarrow \GAf$).
Then all but finitely many $p$ are \emph{good}.
If $p$ is not good, we say it is \emph{bad}.

We denote by $\mathcal{H}$ the Hecke algebra
of $\prod_{p \, \textrm{good}} K_p$ bi-invariant functions
on $\prod_{p \, good} \G(\Q_p)$.  It forms a commutative algebra
under convolution.  It acts in a natural way on functions on $X$. 
We can identify an element of $\mathcal{H}$ with a $\Kf$-invariant
function on $\GAf/\Kf$ which is, moreover, supported
on $\Kf \cdot \prod_{p \, good} \G(\Q_p)$.  We shall abbreviate the latter condition to:
\emph{supported at good primes.} 

If $\H \subset \G$ is a semisimple $\Q$-subgroup,
we say a prime $p$ is $\H$-good
if it is good for $\G$ and moreover
 $\H(\Q_p) \cap K_p$ is a maximal compact subgroup of $\H(\Q_p)$.

We fix compact subsets $\Ominf \subset G, \Omega \subset \GA$, and let
$X_1 \subset X$ denote the (compact) projection of $\Omega$ to $X$.

\begin{defn}
We call $\psi\in L^{2}(X)$ a \emph{Hecke eigenfunction} if
it is a joint eigenfunction of the Hecke algebra $\mathcal{H}$.
We set $\mu_{\psi} = c |\psi|^2 d\vol$, where the constant $c$ is chosen so that $\mu_{\psi}$
is a probability measure on $X$. 
\end{defn}

In \S \ref{sec:tubemass}  we will deal with an
abstract $\G$ as above.  In \S \ref{sec:dioph} --- \S \ref{sec:results}
we sometimes specialize to the
case of $\G$ arising from a division algebra of prime rank, $D$.
The extra notation necessary for this specialization
will be set up in \S \ref{extranotations}.

{\bf Notational convention:} We shall allow the implicit constants in the
notation $\ll$ and $O( \cdot)$ to depend on $\G$ and the data
$\rho, \Kf, \Omega, \Omega_{\infty}$
\emph{without explicit indication}.
In other words, the notation $A \ll B$ means that there exists a
constant $c$, which may depend on all the data specified in this
section, so that $A \leq c B$.  Later we introduce further
data concerning division algebras, and,
as we specify there, we shall allow implicit constants after that
point to depend on these extra data also.

\section{Bounds on the mass of tubes} \label{sec:tubemass}

\subsection{A covering argument} \label{openingsalvo}

Let $B_0 \subset \Omega_{\infty}$ be an open set
containing the identity.  We set
\begin{equation}\label{balli}B :=  B_0 \cdot B_0^{-1}, B_2 = B \cdot B, B_3 = B \cdot B\cdot B, \ldots\end{equation}

In this section, we shall discuss estimating from above
$\mu_{\psi}(\overline{xB})$ for $x \in G$.
\footnote{In fact, all our estimates really work for $\mu_{\psi}(xB)$ for
  $x \in X$, as the notation is meant to suggest. We prove our results only
  for $x \in G$ simply to keep notation to a minimum. If $G$ acts with a
  single orbit on $X$, as we assume in our final applications anyway, there
  is no difference at all.} 
We will repeatedly use the following fact. If $S \subset G$ is a compact subset
and $x \in X_1$ is arbitrary, the cardinality of the fibers of the map
$S \mapsto X$ defined by $s \mapsto x. s$ is bounded
in a fashion that depends on $S$ but not on $x$. 
This is an immediate consequence of the compactness of $X_1$:
indeed, there exists an open neighbourhood $U$ of the identity in $G$
so that $u \in U \mapsto xu \in X$ is injective for any $x \in X_1$. 
In particular, we may apply this remark when $S = B,  B_2, B_3$.
The size of the relevant fibers will therefore be bounded
by a constant depending on $\Omega$.

\begin{lem} \label{cov1}
There exists a covering of $X$ by translates $x_{\alpha} B$
so that any set $z B$, for $z \in X$, intersects at most
$\frac{\vol B_3}{\vol B_0}$ of the $x_{\alpha} B$.
\end{lem}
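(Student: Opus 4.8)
The plan is to construct the covering greedily using a maximal $B_0$-packing, and then to bound the overlap multiplicity by a volume-counting argument. First I would choose a maximal collection of points $\{x_\alpha\} \subset X$ subject to the condition that the translates $x_\alpha B_0$ are pairwise disjoint — such a maximal set exists by Zorn's lemma (or just by finiteness, since $X$ is compact and each $x_\alpha B_0$ has a definite positive volume, so there can be only finitely many $\alpha$). By maximality, for every $z \in X$ the set $z B_0$ must meet some $x_\alpha B_0$; writing $z b_1 = x_\alpha b_2$ with $b_1, b_2 \in B_0$ gives $z = x_\alpha b_2 b_1^{-1} \in x_\alpha B_0 B_0^{-1} = x_\alpha B$. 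Hence the translates $\{x_\alpha B\}$ cover $X$, which is the first assertion.

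For the overlap bound, fix $z \in X$ and ask which $x_\alpha B$ meet $z B$. If $x_\alpha B \cap z B \neq \emptyset$, then $x_\alpha \in z B B^{-1} \subset z B_2 B_2^{-1}$; more to the point I want to trap the disjoint small balls $x_\alpha B_0$ inside a fixed translate of a slightly larger set around $z$. If $x_\alpha b = z b'$ for some $b, b' \in B$, then $x_\alpha = z b' b^{-1}$, so $x_\alpha B_0 \subset z b' b^{-1} B_0 \subset z B \cdot B \cdot B_0^{-1}$; since $B_0 \subset B$ and $B = B_0 B_0^{-1}$, one checks $B \cdot B \cdot B_0^{-1} \subseteq B_3$ (indeed $B_0^{-1} \subseteq B_0 B_0^{-1} = B$, so $B B B_0^{-1} \subseteq B B B = B_3$). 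Thus every such $x_\alpha B_0$ is contained in the single set $z B_3$. Because the $x_\alpha B_0$ are pairwise disjoint and (by the fiber-cardinality remark, or simply because all the relevant volumes are computed downstairs in $X$ from the Haar measure on $G$) each has volume exactly $\vol B_0$ while $z B_3$ has volume $\vol B_3$, the number of such $\alpha$ is at most $\vol B_3 / \vol B_0$. This is precisely the claimed bound.

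The one point that needs care — and the place I expect to spend the most attention — is the passage between volumes on $G$ and volumes on $X$: translates $x_\alpha B_0$ for $x_\alpha \in X$ are subsets of $X$, and $\vol B_0$ should be read as the volume of the image of $B_0$ in $X$, which by the injectivity-on-$X_1$ remark in the excerpt equals the volume of $B_0 \subset G$ provided $B_0$ is small enough (contained in the neighbourhood $U$). The disjointness of the $x_\alpha B_0$ as subsets of $X$ then lets us add volumes honestly, and the containment $\bigsqcup_\alpha x_\alpha B_0 \subseteq z B_3$ gives $(\#\alpha) \cdot \vol B_0 \le \vol B_3$. I would also remark that one should work with the images in $X$ throughout, using $B_0 \subset \Omega_\infty$ and the compactness of $X_1$ exactly as the excerpt sets up, so that all the sets involved are small translates where the projection $G \to X$ is injective and volume-preserving; granting that, the argument above is complete.
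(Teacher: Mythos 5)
Your construction (maximal $B_0$-separated set, covering by $x_\alpha B$, then a volume count of the disjoint small translates trapped near $z$) is the same as the paper's, and the covering half is fine. The one real issue is in the overlap count: you add up the volumes of the disjoint sets $x_\alpha B_0$ \emph{inside $X$}, which forces you to assume that the projection $G\to X$ is injective (hence volume-preserving) on translates of $B_0$. You flag this yourself and patch it by requiring $B_0$ to be contained in the injectivity neighbourhood $U$ and implicitly restricting to $X_1$ — but the lemma carries no such hypotheses: $B_0$ is an arbitrary open subset of $\Omega_\infty$ containing the identity, and $z$ ranges over all of $X$ (which need not be compact, so your finiteness remark also doesn't apply in general; maximality via Zorn is all that is needed). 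As written, your argument proves a weaker statement than the one claimed.

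The paper sidesteps this entirely by doing the count upstairs in $G$. For each $\alpha$ with $x_\alpha B\cap zB\neq\emptyset$, write $x_\alpha = z\varpi_\alpha$ with $\varpi_\alpha\in B\cdot B^{-1}$. The sets $z\varpi_\alpha B_0=x_\alpha B_0$ are disjoint in $X$, and a \emph{necessary} condition for this is that the sets $\varpi_\alpha B_0$, now viewed as subsets of $G$, are already disjoint there (if two of them met in $G$, their images under $g\mapsto zg$ would meet in $X$). Since each $\varpi_\alpha B_0\subset B\cdot B^{-1}\cdot B_0\subset B_3$ and each has Haar volume $\vol B_0$ in $G$, there are at most $\vol B_3/\vol B_0$ of them — no injectivity of the projection is ever invoked. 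If you replace your step ``$\bigsqcup_\alpha x_\alpha B_0\subseteq zB_3$ in $X$'' by this transfer of disjointness to $G$, your proof becomes the paper's and holds without the extra smallness assumption on $B_0$.
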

\begin{proof} 
Choose a maximal subset $ \{x_{\alpha}\} \subset X$ with the property
that $x_{\alpha} B_0$ are disjoint.

Given $z \in X$, we must have $z B_0 \cap x_{\alpha} B_0 \neq \emptyset$
for some $\alpha$.  This means precisely that $X = \bigcup x_{\alpha} B$.

Next, fix $z \in X$. For any
$\alpha$ so that $x_{\alpha} B \cap z B \neq \emptyset$,
we may choose $\varpi_{\alpha} \in B \cdot B^{-1}$ so that
$x_{\alpha} = z \varpi_{\alpha}$.
Then the sets $z \varpi_{\alpha} B_0 \subset X$ are all disjoint.  

A necessary condition for this is that the sets $\varpi_{\alpha} B_0$,
considered as subsets of $G$, are disjoint.
Since each $\varpi_{\alpha} B_0$ belongs to $B_3$, their number is bounded
by $\frac{\vol B_3}{\vol B_0}$. 
\end{proof}

\begin{lem} \label{cov2}
Let $\nu$ be a probability measure on $X$ and $y_1, \dots, y_r \in X$. 

Then
$$\sum_{i=1}^{r} \nu(y_i B)^{1/2} \ll  \frac{\vol B_3}{\vol B_0} \left( \#\left\{ (i,j): y_i B_2 \cap y_j B_2 \neq \emptyset \right\} \right)^{1/2}$$
\end{lem}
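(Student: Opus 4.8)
The plan is to exploit the covering provided by Lemma \ref{cov1} to dominate each of the quantities $\nu(y_i B)^{1/2}$ by a sum of square roots of masses of the covering sets $x_\alpha B$, and then to organize the resulting double sum by intersection patterns. First I would fix the covering $\{x_\alpha B\}$ from Lemma \ref{cov1}, so that each set intersects at most $C := \frac{\vol B_3}{\vol B_0}$ others of the same family. For each $i$, let $A_i = \{\alpha : x_\alpha B \cap y_i B \neq \emptyset\}$; since $y_i B \subset \bigcup_{\alpha \in A_i} x_\alpha B$ we get $\nu(y_i B) \le \sum_{\alpha \in A_i} \nu(x_\alpha B)$, and by Cauchy--Schwarz (using $\#A_i \le C$, which follows from Lemma \ref{cov1} applied with $z$ such that $zB \supset$... more precisely applying the covering-multiplicity bound to the single set $y_i B$ — note $y_i B$ is itself of the form $zB$), we obtain
\[
\nu(y_i B)^{1/2} \le \Bigl(\#A_i\Bigr)^{1/2} \Bigl(\sum_{\alpha \in A_i} \nu(x_\alpha B)\Bigr)^{1/2} \ll \Bigl(\sum_{\alpha \in A_i} \nu(x_\alpha B)\Bigr)^{1/2}.
\]
Hmm — to keep the clean $\sqrt{}$-of-sum structure it is cleaner to instead write $\nu(y_i B)^{1/2} \le \sum_{\alpha \in A_i} \nu(x_\alpha B)^{1/2}$ directly (subadditivity of $t \mapsto t^{1/2}$), avoiding the Cauchy--Schwarz step entirely.

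Next I would sum over $i$: $\sum_{i=1}^r \nu(y_i B)^{1/2} \le \sum_i \sum_{\alpha \in A_i} \nu(x_\alpha B)^{1/2} = \sum_\alpha \nu(x_\alpha B)^{1/2} \#\{i : \alpha \in A_i\}$. Now apply Cauchy--Schwarz in the $\alpha$ variable, restricted to those $\alpha$ that appear (i.e. lie in some $A_i$):
\[
\sum_i \nu(y_i B)^{1/2} \le \Bigl(\sum_{\alpha \text{ appearing}} \nu(x_\alpha B)\Bigr)^{1/2} \Bigl(\sum_\alpha \#\{i : \alpha \in A_i\}^2\Bigr)^{1/2}.
\]
The first factor is $\le 1$ since $\nu$ is a probability measure and the $x_\alpha B_0$ are disjoint — wait, the $x_\alpha B$ themselves are not disjoint, so I need to be slightly careful: $\sum_\alpha \nu(x_\alpha B) \le C \cdot \nu(X) = C$ by the bounded-overlap property of the covering (each point lies in at most $C$ of the $x_\alpha B$). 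So the first factor is $\ll C^{1/2}$. The second factor is a counting quantity: $\sum_\alpha \#\{i : \alpha \in A_i\}^2 = \#\{(i,j,\alpha) : x_\alpha B \cap y_i B \neq \emptyset,\ x_\alpha B \cap y_j B \neq \emptyset\}$.

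The final step — and the main (though mild) obstacle — is to bound this triple-count by $C \cdot \#\{(i,j) : y_i B_2 \cap y_j B_2 \neq \emptyset\}$. The point is that if $x_\alpha B$ meets both $y_i B$ and $y_j B$, then $y_i B$ and $y_j B$ are each within "$B$-distance" of $x_\alpha B$, so $y_i B_2 \cap y_j B_2 \neq \emptyset$ (chasing the group elements: $y_i b_1 = x_\alpha b_2$ and $y_j b_3 = x_\alpha b_4$ forces $y_i b_1 b_2^{-1} b_4 b_3^{-1} \in y_j B_2$, and $b_1 b_2^{-1} \in B_2$, etc. — here one uses $B = B_0 B_0^{-1}$ is symmetric and the definitions in \eqref{balli}; one should double-check the exact power, $B_2$ vs $B_3$, and if necessary enlarge $B_2$ to $B_3$ or $B_4$, which only changes implicit constants). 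Thus for each pair $(i,j)$ with $x_\alpha$ contributing, the pair is counted in $\#\{(i,j): y_i B_2 \cap y_j B_2 \neq \emptyset\}$; and for each such pair, the number of $\alpha$ contributing is $\le$ (number of $x_\alpha B$ meeting $y_i B$) $\le C$ again by Lemma \ref{cov1}. Combining the two factors gives $\sum_i \nu(y_i B)^{1/2} \ll C^{1/2} \cdot (C \cdot \#\{(i,j): y_i B_2 \cap y_j B_2 \neq \emptyset\})^{1/2} \ll C \cdot \#\{(i,j): \ldots\}^{1/2}$, which is the claimed bound (after absorbing the harmless discrepancy between $C^{3/2}$ and $C$ into the $\ll$, or by tracking constants more carefully — the statement only claims $\ll \frac{\vol B_3}{\vol B_0}$, so one power of $C$ is what must survive explicitly and the rest goes into $\ll$; I would verify that no unbounded dependence on $\epsilon$ or on $r$ sneaks in, which it does not since every overlap bound used is the $\epsilon$-independent constant $C$).
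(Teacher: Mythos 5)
Your argument is correct and is essentially the paper's own proof: the same covering from Lemma \ref{cov1}, subadditivity of $t\mapsto t^{1/2}$ over the covering sets, Cauchy--Schwarz in $\alpha$, and the same triple-count bounded by two factors of $\frac{\vol B_3}{\vol B_0}$, yielding exactly $C\cdot\#\{(i,j)\}^{1/2}$ (there is no $C^{3/2}$ discrepancy to absorb). For the one step you hedged on, the clean observation is that if $x_\alpha B$ meets both $y_iB$ and $y_jB$ then $x_\alpha$ itself lies in $y_iBB^{-1}\cap y_jBB^{-1}=y_iB_2\cap y_jB_2$, so the power $B_2$ in the statement is exactly right.
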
 
\begin{proof}

Choose a collection $x_{\alpha}$ as in Lemma \ref{cov1}. Each set $y_i B$
is covered by at most $\frac{\vol B_3} {\vol B_0}$ sets $x_{\alpha}B$. Clearly
$ \nu(y_i B)^{1/2} \leq
 \sum_{\alpha: x_{\alpha} B \cap y_i B \neq \emptyset} \nu(x_{\alpha} B)^{1/2}$, 
 and so
 \begin{multline}
 \left( \sum_i \nu(y_i B)^{1/2}  \right)^2
\leq  \left( \sum_{\alpha} \nu(x_{\alpha} B)^{1/2}  \sum_{i: y_i B \cap x_{\alpha} B \neq \emptyset} 1  \right)^2 \\ \leq 
 \sum_{\alpha} \nu(x_{\alpha} B) \sum_{\alpha}  \left(\sum_{i: y_i B  \cap x_{\alpha} B \neq \emptyset} 1 \right)^2  
\end{multline}

Now, $\sum_{\alpha} \nu(x_{\alpha} B) \leq \frac{\vol B_3}{\vol B_0}$ because each $z \in X$
belongs to at most $\frac{\vol B_3}{\vol B_0}$ of the $x_{\alpha} B$.

Moreover, 
$$ \sum_{\alpha} \left(\sum_{i: y_i B  \cap x_{\alpha} B \neq \emptyset} 1 \right)^2   
= \#\{i,j, \alpha: y_i B \cap x_{\alpha} B \neq \emptyset, 
y_j B \cap x_{\alpha} B \neq \emptyset \}$$
For given $i,j$, the pertinent set of $\alpha$ is nonzero only if 
$y_i B_2 \cap y_j B_2 \neq \emptyset$. If it is nonempty, it has size at most
$\vol(B_3)/\vol(B_0)$. 
\end{proof}

\subsection{General bound}

Let $\psi$ be a Hecke eigenfunction on $X$,
$\mu_{\psi}$ the associated probability measure.

Let $h \in \Hecke$, which we can think of as a function $s \mapsto h_s$
on $\GAf/\Kf$, supported on good primes.
Let $S \subset \GAf/\Kf$ be the support of $h$. 

Because $\psi$ is a Hecke eigenfunction, there is $\Lambda_h \in \C$ so that:
\begin{equation} \label{hecke}
\Lambda_h \psi(x) = \sum_{s \in S} h_s \psi(x. s) \ \ \ (x \in G)
\end{equation}

Note that $x.s \in \GA/\Kf$ and therefore $\psi(x.s)$ makes sense.
The following Lemma bounds $\mu_{\psi}(\overline{x B})$ in terms of the
average mass of certain Hecke translates of $\overline{xB_2}$. 

\begin{lem} \label{meansquare} 
Let $x \in X_1$. Then
$$\mu_{\psi}(\overline{x B}) \ll  \frac{ \left(  \sum_{s \in S}  |h_s| \mu_{\psi}(\overline{x B} . s)^{1/2}
\right)^2.}{|\Lambda_h|^2}$$
\end{lem}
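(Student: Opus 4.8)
The plan is to start from the Hecke eigenfunction relation \eqref{hecke} and integrate it over the tube $\overline{xB}$, rather than applying it pointwise. Concretely, for each $t$ in a small neighbourhood of the identity (so that $xt$ ranges over $\overline{xB_0}$, say) equation \eqref{hecke} reads $\Lambda_h \psi(xt) = \sum_{s\in S} h_s\,\psi(xt.s)$. Taking absolute values, applying the triangle inequality, then squaring and integrating $|\psi(xt)|^2$ against $d\vol$ over $t$ in the relevant set gives
\begin{equation*}
|\Lambda_h|^2\,\mu_\psi(\overline{xB_0}) \ll \int \Bigl( \sum_{s\in S} |h_s|\,|\psi(xt.s)| \Bigr)^2 \, dt.
\end{equation*}
Now expand the square and apply Cauchy--Schwarz in the variable $s$ with weights $|h_s|$: the inner sum is bounded by $\bigl(\sum_s |h_s|\bigr)\bigl(\sum_s |h_s|\,|\psi(xt.s)|^2\bigr)$. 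After integrating in $t$ and interchanging sum and integral, each term $\int |\psi(xt.s)|^2\,dt$ is, up to the bounded fiber multiplicity coming from the compactness of $X_1$ (the remark preceding Lemma \ref{cov1}), comparable to $\mu_\psi(\overline{xB_0}.s)$, hence to $\mu_\psi(\overline{xB}.s)$.

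The one subtlety is that the right-hand side of the claimed inequality has the shape $\bigl(\sum_s |h_s|\,\mu_\psi(\overline{xB}.s)^{1/2}\bigr)^2$, i.e.\ the square of a sum of \emph{square roots} of masses, not $\bigl(\sum|h_s|\bigr)\bigl(\sum|h_s|\mu_\psi(\ldots)\bigr)$. To get this stronger form one should not use the crude Cauchy--Schwarz above but instead keep the sum over $s$ inside and use the subadditivity of $\mu\mapsto \mu(\cdot)^{1/2}$ together with an $L^2$-type estimate. The cleanest route: write $\psi\restrict_{\overline{xB}}$ as a function and use the triangle inequality in $L^2(dt)$ directly on $\Lambda_h \psi(xt) = \sum_s h_s \psi(xt.s)$, giving
\begin{equation*}
|\Lambda_h|\,\|\psi\|_{L^2(\overline{xB_0})} \le \sum_{s\in S} |h_s|\,\|\psi(\,\cdot\,.s)\|_{L^2(\overline{xB_0})} \ll \sum_{s\in S}|h_s|\,\mu_\psi(\overline{xB}.s)^{1/2},
\end{equation*}
where again the last step uses the bounded-fiber remark to pass from the integral of $|\psi(xt.s)|^2$ over the neighbourhood of the identity to the mass $\mu_\psi(\overline{xB}.s)$ (here one uses that $B$ is comparable to $B_0\cdot B_0^{-1}$, so a genuine open neighbourhood of $1$ sits inside $B$, and that $x.s\in\GA/\Kf$ makes $\psi(xt.s)$ well-defined as noted after \eqref{hecke}). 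Squaring both sides and dividing by $|\Lambda_h|^2$ yields exactly the asserted bound, since $\mu_\psi(\overline{xB})\ll \mu_\psi(\overline{xB_0})$ up to covering $\overline{xB}$ by boundedly many translates of such a neighbourhood.

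The main thing to be careful about is not a deep obstacle but a bookkeeping point: ensuring that the passage between $\|\psi(\,\cdot\,.s)\|_{L^2(\text{nbhd of }1)}$ and $\mu_\psi(\overline{xB}.s)^{1/2}$ is uniform in $x\in X_1$ and in $s$. This is precisely what the compactness of $X_1$ buys us via the remark before Lemma \ref{cov1} (a fixed open $U\ni 1$ on which $u\mapsto xu$ is injective for all $x\in X_1$, and bounded fibers of $B, B_2, B_3 \to X$), and all implied constants are allowed to depend on $\Omega$, $\Omega_\infty$ by the notational convention. No new ideas beyond the Cauchy--Schwarz/triangle-inequality dispersion argument sketched in the introduction are needed; the point of the lemma is simply to record it in the clean averaged form that Lemma \ref{cov2} can then consume.
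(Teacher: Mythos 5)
Your final argument --- integrating the eigenfunction relation over the tube, applying the triangle inequality in $L^2$ (equivalently, squaring out and applying Cauchy--Schwarz termwise), and invoking the bounded-fiber remark for $b\mapsto x.s.b$ --- is exactly the paper's proof, and the earlier ``crude Cauchy--Schwarz'' paragraph is a false start you correctly discard. The one cleanup needed is to integrate over $B$ itself rather than $B_0$, which removes the unjustified step $\mu_{\psi}(\overline{xB})\ll\mu_{\psi}(\overline{xB_0})$ (covering $\overline{xB}$ by translates $x_iB_0$ with $x_i\neq x$ does not yield that inequality, and the lemma's right-hand side already involves $\overline{xB}.s$, so nothing is lost).
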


\begin{proof} 
This follows by squaring out equation \eqref{hecke}, integrating over $B$
and applying Cauchy-Schwarz. 
We use the fact that $b \in B \mapsto x.s.b \in X$ has fibers whose cardinality is bounded
in terms of $\Omega_{\infty}$ (see discussion in \S \ref{openingsalvo}). 
\end{proof}

The next Lemma clarifies that the only necessary input to bound $\mu_{\psi}(\overline{xB})$
is an estimate for the average intersection multiplicity of Hecke translates of $\overline{xB}$. 

\begin{lem} \label{cover}
Suppose that $h$ is supported on $S \subset \GAf/\Kf$ and $|h| \leq 1$. 
 
Then for $x \in G$
\begin{equation} \label{inter}
\mu_{\psi}(\barxB) \ll 
\left( \frac{\vol B_3}{\vol B_0} \right)^2 |\Lambda_h|^{-2} 
\left( \#\{s, s' \in S: \overline{xB_2  s} \cap \overline{xB_2  s'} \neq \emptyset\} \right)
\end{equation}
\end{lem}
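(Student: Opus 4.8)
The plan is to combine Lemma~\ref{meansquare} with Lemma~\ref{cov2}, using the probability measure $\nu = \mu_\psi$ and the points $\{y_i\}$ given by the Hecke translates $\{\overline{xB} \cdot s : s \in S\}$. First I would apply Lemma~\ref{meansquare} to get
$$\mu_{\psi}(\overline{x B}) \ll |\Lambda_h|^{-2} \left( \sum_{s \in S} |h_s| \, \mu_{\psi}(\overline{xB} \cdot s)^{1/2} \right)^2,$$
and then, since $|h| \leq 1$, bound $|h_s| \leq 1$ so that the inner sum is at most $\sum_{s \in S} \mu_{\psi}(\overline{xB}\cdot s)^{1/2}$. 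The sets $\overline{xB}\cdot s$ are translates of $\overline{xB}$; writing $x B \cdot s$ requires a small point of bookkeeping (that $\overline{xB}\cdot s = \overline{y_s B}$ for appropriate $y_s \in X$, using equivariance and that $B \subset G$ commutes past $s \in \GAf/\Kf$), but modulo that identification the sum is exactly of the form treated by Lemma~\ref{cov2}.

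Next I would invoke Lemma~\ref{cov2} with $r = \#S$, $y_i$ ranging over the translates $\overline{xB}\cdot s$, to obtain
$$\sum_{s \in S} \mu_{\psi}(\overline{xB}\cdot s)^{1/2} \ll \frac{\vol B_3}{\vol B_0} \left( \#\{(s,s') \in S \times S : \overline{xB_2 \, s} \cap \overline{xB_2\, s'} \neq \emptyset \} \right)^{1/2},$$
where the passage from $y_i B$ to $\overline{xB_2\, s}$ uses that $B \cdot B = B_2$ in the definition of the intersection condition of Lemma~\ref{cov2} (the sets appearing there are $y_i B_2$). Squaring this estimate and substituting into the bound from Lemma~\ref{meansquare} produces exactly \eqref{inter}, with the factor $(\vol B_3/\vol B_0)^2$ and the intersection count appearing as stated.

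The only genuinely substantive point — and the step I would be most careful about — is the translation between the "abstract" translates $y_i$ of Lemma~\ref{cov2} and the Hecke translates $\overline{xB}\cdot s$ that actually occur: one must check that right multiplication by $s \in \GAf/\Kf$ carries $\overline{xB}$ to a set of the form $\overline{y B}$ for some $y \in X$ (so Lemma~\ref{cov2} applies verbatim), that the resulting $y$ lies in a fixed compact $X_1$ so the fiber-cardinality remarks of \S\ref{openingsalvo} are uniform, and that the intersection condition $y_i B_2 \cap y_j B_2 \neq \emptyset$ matches $\overline{xB_2\, s}\cap \overline{xB_2\, s'} \neq \emptyset$. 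All of this is routine given the $G$-equivariance of the Hecke correspondences and the definition of the $B_k$, so no single step is a serious obstacle; the lemma is essentially the concatenation of the two preceding ones, and the proof is short.
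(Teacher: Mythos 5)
Your proposal is correct and is exactly the paper's argument: the paper's proof consists of applying Lemma~\ref{meansquare}, bounding $|h_s|\leq 1$, and then invoking Lemma~\ref{cov2} with $\{y_i\}=\{x s:s\in S\}$, which is precisely your plan. The bookkeeping point you flag (that $\overline{xB}\cdot s=\overline{(xs)B}$, since $B$ lives at the archimedean place and $s$ at the finite places) is indeed the only identification needed, and the paper handles it implicitly by the same choice of $y_i$.
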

\begin{proof}
This follows from the prior Lemma and Lemma \ref{cov2} (applied to the set $\{y_i\} = \{x s: s\in S\} \subset X$). 
 \end{proof}

Lemma \ref{cover} is our key technical Lemma.  Setting $N = \# S$ as in
the introduction we may rewrite the right-hand-side as:
$$ 
\left( \frac{\vol B_3}{\vol B_0} \right)^2
\frac{N}{\abs{\Lambda_h}^2}  \cdot
\frac{1}{N}\sum_{s\in S}
 \# \left\{ s' \in S: \overline{x B_2 s} \cap \overline{x B_2 s'} \neq \emptyset \right\}.
$$
This expression should be compared with the right-hand-side of
equation \eqref{heckecontrol}.  The key difference is that the bound
now only depends on an ``average'' intersection number
of Hecke translates (the average over $s$ of the number of $s'$ such that
the translates by $s$ and $s'$ intesect), whereas the bound in
\eqref{heckecontrol} depended on a ``worst case'' intersection number
(the supremum over $s$ of the number of such $s'$).
Our antecedent \cite{LindenstraussBourgain:SL2_Ent}
relied on controlling this latter quantity, which imposed greater
restrictions on the use of Hecke operators.

\section{Diophantine Lemmata}\label{sec:dioph}
Recall that we have fixed a maximal $\R$-split torus $A \subset G$.
Given a nontrivial $a \in A$, we fix a compact neighbourhood of the
identity $C \subset \Z_G(a)$ in the centralizer  $a$ inside $G$
(the choice is immaterial; we will later take $C$ to be small enough).
Now let $B=B(C,\varepsilon)$ be an $\varepsilon$-neighbourhood
of $C$ inside $G$ (``a tube around a piece of a Levi subgroup'').
We intend to bound the $\mu_{\psi}$-mass of sets of the form
$\overline{x B(C,\varepsilon)} \subset X$.

In the previous Setion we saw that such bounds require control
on the intersection pattern of translates of these sets.  This Section
is devoted to two results giving this control.

We first analyze the case where $\G$ is $\R$-split and $a\in A$
is \emph{regular}, that is when $Z_G(a)$ is a maximal torus.
It turns out that the intersection is controlled by a $\Q$-subtorus
of $\G$.  Roughly, we show that all the $\gamma \in \G(\Q)$ that lie
``very close'' to an $\R$-torus (where ``very close'' really means
``very close relative to the denominator of $\gamma$'') must all lie
on a single $\Q$-torus.  Using this we will show that the intersection
pattern of Hecke translates of tubes of this type are
controlled by the torus.
 
Secondly, more control is possible in the simplest case:
when $\G$ arises from a division algebra of prime degree.
In that case $\G$ has very few $\Q$-subgroups and one may remove
the regularity assumption: elements close to any Levi subgroup (a subgroup
of the form $Z_G(a)$) must lie on a $\Q$-torus.

An argument of this type (in the case $\dim_{\Q} D=4$) already appears
in \cite{LindenstraussBourgain:SL2_Ent}; this argument does not suffice
for the higher rank case, however, because \cite{LindenstraussBourgain:SL2_Ent}
uses commutativity of the relevant Levi subgroups in an important
way.  Also, it is somewhat awkward in the case of algebras which are
not division algebras.  However, this approach is more transparent.
Our second argument takes this \emph{ad hoc} approach, and is a
generalization specific to the case of division algebras.

\begin{rem} Here is a toy model of the type of reasoning we use:
Let $\ell$ be a line segment in $\R^2$ of length $1$.  Suppose
$P_i = (x_i, y_i)$, for $1 \leq i \leq 3$, are points in $\R^2$ with
rational coordinates all of which lie within $\varepsilon$ of $\ell$,
and let $M$ be an upper bound for the denominators of all $x_i, y_i$.
Then, if $\varepsilon < \frac{1}{10} M^{-6}$, the $P_i$ are themselves
co-linear.  Indeed, the area of the triangle formed by $P_1, P_2, P_3$
is a rational number with denominator $\leq 2 M^6$.  On the other hand
the area of this triangle is $\leq 2 \varepsilon$, whence the conclusion.
\end{rem}

\begin{rem}
Recall that we have fixed compact subsets
$\Omega \subset \G(\adele)$ and $\Omega_{\infty} \subset G$. 
We will later take $C$ and $\epsilon$ small enough to ensure that
$B(C,\varepsilon) \subset \Ominf$. 
In particular, $C \subset \Ominf$. 
Consequently, in view of the convention discussed
in \S \ref{sec:notation}, we shall not explicitly indicate that
implicit constants in $\ll$ or $O(\dots)$ depend on $C$. 
\end{rem}

\subsection{Denominators on an adelic group}\label{sec:adeldenom}

As in the toy example, in our diophantine anaylsis it is
convenient to use a notion of \emph{denominator}
rather than a notion of height.  We define this notion
progressively for scalars, for elements of $\SL_n(\Af)$,
and then for elements of $\GAf$.  We write $\N$ for the set of
natural numbers, where our denominator function will be valued.

Given $x\in\Q_p$ we let $\denom(x)$ denote the minimal non-negative power
of $p$ such that $\denom(x)\cdot x\in \Z_p$.  In other words,
$p$-adic integers have no denominator while a $p$-adic number of
the form $u/p^e$, $u\in\Z_p^\times$, has denominator $p^e$.
It is easy to check that the function $\denom\colon\Q_p\to\N$
is invariant under translation by $\Z_p$ and is hence
uniformly continuous.  Next, for $x=(x_p)_{p<\infty} \in \Af$ we set
$\denom(x)=\prod_{p<\infty} \denom(x_p)$
(almost all the factors are equal to $1$).
This gives a locally constant function $\denom\colon\Af\to\N$.
The diagonal embedding of $\Q$ in $\Af$ allows us to restrict $\denom$ to $\Q$
and it is easy to check that if $a,b\in\Z$ are relatively prime with $b>0$
then $\denom(\frac{a}{b}) = b$.  In other words, the restriction is the
usual notion of the denominator of a rational number.

For $g=(g_p)\in \SL_n(\Af)$ set $\denom(g)$ to be the least common multiple
of the denominators of the matrix entires.  
For $k_p \in \SL_n(\Z_p)$, or more generally
for $k\in \prod_{p} \SL_n(\Z_p)$ we then have $\denom(k_p) = \denom(k) = 1$,
and in fact $\denom\colon \SL_n(\Af) \to \N$ is left- and right-invariant
by $\prod_{p} \SL_n(\Z_p)$.

Finally, for $g \in \GAf$, we set $\denom(g) = \denom(\rho(g))$.
This function is then bi-$\Kf$-invariant.
By restriction this induces a notion of denominator
for elements $\gamma\in\G(\Q)$, the least common multiple of the denominators
of the matrix elements of $\rho(\gamma)\in\SL_n(\Q)$.

For the infinite place we fix a left-invariant Riemannian
metric $\dist$ on $\SL_n(\R)$.  Pulling the metric tensor back
via $\rho$ induces a left-invariant Riemannian metric $\distG$
on $G$.  Since it maps geodesics on $G$ to curves of the same
length in $\SL_n(\R)$, $\rho$ is a nonexpansive map with respect to
these metrics.

The ``$\varepsilon$ neighbourhoods'' $B(C,\varepsilon)$ are taken with
respect to the metric $\distG$.

\begin{lem}
Let $g,g'\in \GAf$. Then $\denom(gg')\leq \denom(g)\denom(g')$
and $\denom(g^{-1})\leq \denom(g)^{n-1}$.
\end{lem}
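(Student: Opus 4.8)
The plan is to reduce both inequalities to elementary facts about denominators of matrix entries, using only that $\rho$ is a homomorphism into $\SL_n$ and that $\denom$ on $\GAf$ is by definition $\denom\circ\rho$. Since $\denom(g)=\denom(\rho(g))$ and $\rho$ is multiplicative, it suffices to prove the two bounds for $g,g'\in\SL_n(\Af)$, where $\denom$ is the least common multiple of the denominators of the matrix entries. Moreover, since $\denom$ on $\Af$ is multiplicative over places and everything in sight is defined place-by-place, I would further reduce to a single place $p$: it is enough to show, for $g,g'\in\SL_n(\Q_p)$, that $\denom_p(gg')\le\denom_p(g)\denom_p(g')$ and $\denom_p(g^{-1})\le\denom_p(g)^{n-1}$, where $\denom_p$ records the exact power of $p$ in the denominator. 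Taking the product over all $p$ then gives the global statement, and applying $\rho$ gives the statement for $\GAf$.

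For the submultiplicativity $\denom(gg')\le\denom(g)\denom(g')$: fix a place $p$, and let $a=\denom_p(g)$, $b=\denom_p(g')$ be the (powers of $p$) clearing the denominators, so $ag\in M_n(\Z_p)$ and $bg'\in M_n(\Z_p)$. Then $ab\,(gg')=(ag)(bg')\in M_n(\Z_p)$ since $M_n(\Z_p)$ is a ring, so every entry of $gg'$ has denominator dividing $ab$, whence $\denom_p(gg')\mid ab$ and in particular $\denom_p(gg')\le ab=\denom_p(g)\denom_p(g')$. Taking the product over $p$ gives $\denom(gg')\le\denom(g)\denom(g')$, and then $\denom(\rho(g)\rho(g'))\le\denom(\rho(g))\denom(\rho(g'))$ gives the claim for $\GAf$.

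For $\denom(g^{-1})\le\denom(g)^{n-1}$: again work at a place $p$ with $a=\denom_p(g)$, so $ag\in M_n(\Z_p)$. Write $g^{-1}$ via the adjugate: $g^{-1}=\frac{1}{\det g}\,\mathrm{adj}(g)$, and since $g\in\SL_n$ we have $\det g=1$, so $g^{-1}=\mathrm{adj}(g)$. Each entry of $\mathrm{adj}(g)$ is (up to sign) an $(n-1)\times(n-1)$ minor of $g$, hence a polynomial of degree $n-1$ in the entries of $g$; therefore $a^{n-1}\,\mathrm{adj}(g)=\mathrm{adj}(ag)\in M_n(\Z_p)$ because $ag$ has $\Z_p$ entries and minors of an integral matrix are integral. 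Thus $\denom_p(g^{-1})=\denom_p(\mathrm{adj}(g))\mid a^{n-1}$, so $\denom_p(g^{-1})\le\denom_p(g)^{n-1}$, and multiplying over $p$ gives $\denom(g^{-1})\le\denom(g)^{n-1}$. For $g\in\GAf$, apply this to $\rho(g)\in\SL_n(\Af)$ and use $\rho(g^{-1})=\rho(g)^{-1}$.

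I do not expect any genuine obstacle here; the statement is a bookkeeping lemma. The only point requiring a word of care is the reduction through $\rho$: one must note that $\rho$ lands in $\SL_n$ (not merely $\GL_n$), so that $\det\rho(g)=1$ and the adjugate argument applies cleanly, and that $\denom$ on $\GAf$ was \emph{defined} as $\denom\circ\rho$ so there is nothing to check about compatibility. A secondary minor point is making sure the place-by-place reduction is legitimate, i.e.\ that $\denom$ on $\Af$ is literally the product of the local $\denom_p$ and that $gg'$, $g^{-1}$ are computed componentwise over the places — both are immediate from the definitions recalled in \S\ref{sec:adeldenom}. So the "hard part," such as it is, is simply choosing the cleanest phrasing: I would state the argument directly for $\SL_n$ over a local field, then remark that the global and $\GAf$ statements follow formally.
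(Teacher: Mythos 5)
Your proof is correct and follows essentially the same route as the paper's: reduce via $\rho$ to $\SL_n(\Af)$, use the clearing-denominators characterization for submultiplicativity, and use the adjugate (entries of $g^{-1}$ being integer-coefficient polynomials of degree $n-1$ in the entries of $g$, valid since $\det g=1$) for the inverse bound. The only difference is that you spell out the place-by-place reduction, which the paper leaves implicit.
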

\begin{proof}
Replacing $g,g'$ by their images under $\rho$ we may
assume $g,g'\in \SL_n(\Af)$.  For the first claim we use an alternate
characterization of $\denom(g)$: it is the smallest positive integer $m$
for which $m\cdot g\in \GL_n(\prod_p \Z_p)$.  This implies
$$\denom(g)\denom(g')gg'\in \GL_n(\prod_p \Z_p)\, ,$$
and the claim follows.
For the second note that the matrix entries of $g^{-1}$ are polynomials
of degree $n-1$ and integer coefficients in the matrix entries of $g$. 
\end{proof}

\subsection{A Diophatine Lemma for $\R$-split groups}\label{subsec:dl1}
Assume now that $\G$ is $\R$-split, and let $a\in A$ be a \emph{regular}
element. In that case $Z_G(a)=MA$ is a maximal torus in $\G(\R)$,
and $\rho(MA)$ is an $\R$-split torus in $\SL_n(\R)$. 

The following is the basic diophantine result.
\begin{lem}\label{diophantine1} For $c>0$ sufficiently
large (in fact, depending only on $\G, \rho$), and $c'>0$ sufficiently small
(depending on $\G, \rho, A, \Ominf$), for any $g\in\Ominf$
the set of $\gamma\in\GQ$ such that
\begin{equation}\label{defSsplit}
\inf \{\distG(\gamma,t) \mid t\in g(MA\cap\Ominf\Ominf^{-1})g^{-1}\}
  \leq \varepsilon ,{\,\,} \denom(\gamma)\leq M
\end{equation}
is contained in a $\Q$-subtorus $\T\subset \G$,
provided that
\begin{equation}
\varepsilon M^c \leq c'
\end{equation}
\end{lem}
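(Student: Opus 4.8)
The plan is to mimic the toy model in the remark: co-linearity of rational points close to a line is forced by a denominator bound on the area form, and here ``lying on a common torus'' should be forced by denominator bounds on suitable polynomial functions that cut out the torus. First I would reduce to a single convenient setting. Replacing $\G$ by its image under $\rho$ and everything in sight by images of matrices, I work inside $\SL_n(\R)$, where distances are Euclidean-type (comparable, on the bounded set $\Ominf\Ominf^{-1}$, to the sup-norm on matrix entries) and where $\denom$ is multiplicative up to controlled powers by the Lemma just proved. Fix $g\in\Ominf$. Let $H = g(MA)g^{-1}$; this is an $\R$-split torus in $\SL_n(\R)$ (conjugate of the split torus $\rho(MA)$), and the set of $\gamma$ in \eqref{defSsplit} all lie within $O(\varepsilon)$ of the compact piece $g(MA\cap\Ominf\Ominf^{-1})g^{-1}$ of $H$.

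The core step is a Vandermonde/algebraic-dependence argument. The torus $H$ is simultaneously diagonalizable: there is $u\in\GL_n(\R)$ (depending polynomially and boundedly on $g$, since $g$ ranges over the compact $\Ominf$) with $u H u^{-1}$ diagonal. The condition ``$\gamma$ lies on a torus containing $H$'' can be detected by the vanishing of the off-diagonal entries of $u\gamma u^{-1}$ in an appropriate common eigenbasis — more robustly, by the vanishing of a fixed finite collection of polynomials $P_1,\dots,P_r$ (with coefficients that are polynomials in the entries of $g$, of bounded degree and bounded denominators over $\Z$) which cut out the subvariety ``commutes with $H$'' or ``lies in the Zariski closure of the group generated by $H$''. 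For each $\gamma$ in the set \eqref{defSsplit}, choose $t_\gamma\in H$ with $\distG(\gamma,t_\gamma)\le\varepsilon$; then $P_k(t_\gamma)=0$ exactly, while $P_k(\gamma) = P_k(\gamma)-P_k(t_\gamma)$ is, by the mean value theorem on the bounded set $\Ominf\Ominf^{-1}$, of size $O(\varepsilon)$. On the other hand $P_k(\gamma)$ is a rational number: clearing denominators, $(\text{some fixed integer})\cdot\denom(\gamma)^{\deg P_k}\cdot P_k(\gamma)\in\Z$, so $P_k(\gamma)$ is a rational with denominator $\le c_0 M^{c_1}$ for constants $c_0,c_1$ depending only on $\G,\rho$ (through the degrees and coefficient-denominators of the $P_k$). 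Hence if $\varepsilon M^{c_1} \le c'$ with $c'$ small enough — small enough that $O(\varepsilon) < 1/(c_0 M^{c_1})$ — the only possibility is $P_k(\gamma)=0$ for all $k$, i.e.\ every such $\gamma$ lies on the algebraic subgroup $\mathbf{L}\supset H$ cut out by the $P_k$.

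From there I would finish by descending to a $\Q$-torus. The subgroup $\mathbf{L} = Z_{\SL_n}(H)$ (the centralizer of the torus $H$) is defined over $\R$, but the relevant point is that it is cut out over $\Q$ once we work with $Z_{\G}$ of the $\Q$-structure: since $H$ is a maximal torus of $G$ (as $a$ is regular), $\mathbf{L}\cap\G$ is a maximal torus of $\G$, and the $\gamma$'s, being $\Q$-points of $\G$ lying in this torus, generate a $\Q$-subtorus $\T\subset\G$ — concretely, $\T$ may be taken to be the Zariski closure over $\Q$ of the group generated by the $\gamma$'s, which is a $\Q$-subgroup of the maximal torus $Z_\G(a)^{\text{conj}}$ and hence itself a $\Q$-torus. (If one prefers: the $\gamma$'s all commute with each other, being elements of the abelian group $\mathbf{L}$, so their Zariski closure is a commutative $\Q$-group consisting of semisimple elements, hence a $\Q$-torus.) The main obstacle is the middle step: choosing the polynomials $P_k$ so that (i) their common zero set over $\R$ is precisely (the conjugate of) the Levi/torus containing $H$, (ii) their coefficients depend polynomially, with uniformly bounded degree and integer-coefficient denominators, on the parameter $g\in\Ominf$ — this uniformity is what lets the exponent $c$ depend only on $\G,\rho$ — and (iii) the ``commutator/centralizer'' equations are genuinely enough to detect membership in the torus, which uses regularity of $a$ in an essential way. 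Everything else (the mean-value estimate, multiplicativity of denominators, Cauchy–Schwarz-free bookkeeping) is routine given the Lemmas already in the excerpt.
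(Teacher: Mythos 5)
There is a genuine gap in your middle step, and it is precisely the pitfall the paper's toy model is meant to flag. Your polynomials $P_k$ cut out (the centralizer of, or a torus containing) $H=g(MA)g^{-1}$, so their coefficients depend on the entries of $g\in\Ominf$ and on the real torus $MA$. These are arbitrary real numbers: $g$ is just an element of a compact subset of $\G(\R)$, and $MA$ is a real Lie subgroup with no rational structure, so the coefficients have no ``bounded denominators over $\Z$'' in any sense. Consequently $P_k(\gamma)$ is a real number, not a rational one, the assertion $(\text{fixed integer})\cdot\denom(\gamma)^{\deg P_k}\cdot P_k(\gamma)\in\Z$ is false, and the decisive step ``a rational of denominator $\leq c_0M^{c_1}$ and size $O(\varepsilon)$ must vanish'' collapses. (Compare the toy model: the line $\ell$ is arbitrary, so ``distance to $\ell$'' is not a rational quantity; the rational quantity is the area of the triangle spanned by three of the rational points, a polynomial in the rational data alone. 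Approximating $g$ by a rational matrix does not rescue the argument, since its denominator would have to grow like $\varepsilon^{-1}$, destroying the exponent.)

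The paper's proof is organized exactly to avoid this. A combinatorial lemma reduces the claim to $(n+1)$-element subsets, and Chevalley's quantifier elimination produces \emph{integer-coefficient} polynomials $P_{ij},Q_{ik}$ in the entries of an $(n+1)$-tuple of matrices such that ``$\gamma_1,\dots,\gamma_{n+1}$ lie on a common torus'' is equivalent to one of finitely many systems $P_{ij}=0$, $Q_{ik}\neq 0$ evaluated at $\rho(\vec{\gamma})$ only. Since $\rho(\vec{\gamma})$ is rational with denominator $\leq M$, the values $P_{ij}(\rho(\vec{\gamma}))$ are genuine rationals of denominator $\leq M^{c}$, and the smallness-plus-denominator argument applies. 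The real torus $H$ enters only through the nearby exact tuple $\vec{t}\in H^{n+1}$, which satisfies one of the systems exactly and hence shows that system is $O(L\varepsilon)$-approximately satisfied at $\vec{\gamma}$; compactness supplies the uniform lower bound $b$ on the $|Q_{ik}|$ needed to pin down the correct branch $i$. If you wish to repair your argument you must likewise replace your $g$-dependent polynomials by integer polynomials in tuples of the $\gamma$'s alone (commutators and determinantal conditions on the generated algebra would be a concrete choice), never evaluating anything against the irrational parameters $g$ and $MA$.
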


Let $S$ be the set of $\gamma$ defined in equation
\eqref{defSsplit}.  We establish two preparatory results before the
Lemma itself.  The first is based on the analysis of the case
of division algebras (see Lemma \ref{diophantine2}) where the idea is more
fully exploited.

\begin{lem}[enough to check subsets of fixed size]\label{lem:gen-torus}
A set $S \subset \GQ$ is contained in a $\Q$-torus iff the same holds
for every subset $S'$ of $S$ of size $n+1$.
\end{lem}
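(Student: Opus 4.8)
The statement is a compactness/finiteness principle: being contained in a $\Q$-torus is a condition that can be detected on bounded subsets. The key structural fact to exploit is that a $\Q$-torus in $\G$ is determined by (and determines) a commutative reductive $\Q$-subalgebra of the relevant matrix algebra, and the Zariski closure of any subset of $\G(\Q)$ is a $\Q$-subgroup whose dimension is bounded by $\dim\G$. The plan is as follows. First I would reduce to the algebraic geometry: a set $S\subset\GQ$ is contained in a $\Q$-torus iff the Zariski closure $\overline{S}^{\,\mathrm{Zar}}$ (taken over $\Q$) is contained in one, equivalently iff $\overline{S}^{\,\mathrm{Zar}}$ generates an algebraic subgroup that is a torus (a closed subgroup of a torus is a torus, and any torus containing $S$ contains $\overline{S}^{\,\mathrm{Zar}}$ and the group it generates). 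So the task becomes: the group $\langle S\rangle$ generated by $S$ is a $\Q$-torus iff $\langle S'\rangle$ is, for every $(n+1)$-element subset $S'$.

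The forward direction is trivial. For the converse I would argue by a dimension/noetherian-induction argument. Consider the collection of all finitely generated subgroups $\langle S'\rangle$ with $S'\subset S$ finite; these are $\Q$-subgroups of $\G$, nested under inclusion as $S'$ grows, and of dimension at most $\dim\G$. Choose $S'_0\subset S$ finite with $\langle S'_0\rangle$ of maximal dimension, and among those of maximal dimension one with the maximal number of connected components; then $\langle S'_0\rangle = \langle S\rangle$, since adjoining any further element of $S$ cannot increase the dimension or component count, hence stays inside $\langle S'_0\rangle$. Thus $\langle S\rangle = \langle S'_0\rangle$ is generated by some \emph{finite} $S'_0$. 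It remains to control the size of $S'_0$: I want to replace $S'_0$ by a subset of size $\le n+1$ that still generates the same torus, OR more precisely to show that if every $(n+1)$-subset of $S$ lies on a torus then so does $S'_0$. Here is where the linear-algebra dimension bound enters. If every $(n+1)$-element subset of $S$ lies in a $\Q$-torus, then in particular every such subset consists of commuting semisimple elements whose $\Q$-span inside $M_n$ (via $\rho$) is a commutative subalgebra; since that span sits inside $M_n(\Q)$, which has dimension $n^2$ — actually one gets a sharper bound because a commutative semisimple subalgebra of $M_n$ has $\Q$-dimension at most $n$ — one sees that the span of \emph{all} of $\rho(S)$ together with identity cannot jump: picking a maximal linearly independent subset of $\{I\}\cup\rho(S)$ of size $\le n$ gives a subset of $S$ of size $\le n$ whose $\Q$-linear span contains all of $\rho(S)$, and adjoining one more element of $S$ gives a set of size $\le n+1$, which by hypothesis lies on a torus, forcing that extra element to commute with (and be a polynomial in) the chosen ones. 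Running this over all elements of $S$ shows every element of $\rho(S)$ lies in the fixed commutative semisimple algebra $\mathcal{A}$ spanned by the $\le n$ chosen elements plus $I$, and that all of $S$ commutes; hence $\langle S\rangle$ is a commutative group of semisimple elements, i.e. contained in a $\Q$-torus (the centralizer argument: $S$ lies in the centralizer of $\mathcal{A}$, is commutative and semisimple, so its Zariski closure is a $\Q$-torus).

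The main obstacle I anticipate is making the ``$n+1$ suffices'' counting sharp and clean: one must pin down exactly why $n+1$ and not some larger function of $n$ — the point being that a set lies on a torus iff (a) all elements are semisimple, (b) they pairwise commute, and (c) jointly they commute, and conditions (a)–(c) plus the bound on the dimension of a commutative semisimple subalgebra of $M_n$ let a single ``witness'' set of size $n+1$ (a spanning set of size $\le n$, plus one more element to be tested) certify membership for every additional element. I would be slightly careful that semisimplicity and commutativity are each "$2$-element" or "$1$-element" conditions while the spanning/linear-dependence phenomenon is the genuinely $n$-dimensional one, so that $n+1$ is the right threshold; and I would make sure the argument is insensitive to the choice of faithful representation $\rho$ (any such choice works, with $n$ its dimension). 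Once the algebra is fixed the geometry is routine: the Zariski closure over $\Q$ of a commuting set of semisimple elements is a $\Q$-torus.
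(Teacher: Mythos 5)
Your argument is correct and is essentially the paper's: both proofs rest on the observation that a subset of $\GQ$ lying on a torus generates a commutative semisimple subalgebra of $M_n$ of dimension at most $n$, so one can extract a ``witness'' subset $S_0$ of size at most $n$ and then invoke the hypothesis for $S_0\cup\{\gamma\}$ (of size $\leq n+1$) to absorb each further $\gamma\in S$. The only cosmetic differences are that the paper maximizes the dimension of the generated $\C$-algebra over $(n+1)$-subsets rather than choosing a maximal linearly independent subset of $\{I\}\cup\rho(S)$ --- which also sidesteps your worry about whether the linear span is closed under multiplication, since one works with the generated algebra throughout --- and that your Noetherian detour through finitely generated subgroups is unnecessary.
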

\begin{proof}
For each $S'$ of size $n+1$ let $A_{S'}\subset M_n(\C)$ be the
$\C$-algebra generated in $M_n(\C)$ by the image set $\rho(S)$,
and choose $S'$ such that $A'=A_{S'_0}$ has maximal dimension
(as a vector space over $\C$).

Since $\rho(S')$ is contained in a subtorus of $\SL_n$, the subalgebra
$A'$ is conjugate to a subalgebra of the algebra of diagonal matrices.
In particular, it has dimension at most $n$.  In a proper containment
of $\C$-algebras, the two algebras have different dimensions.
Thus there exists a subset $S_0 \subset S'$ of size at most $n$ such
that $A_{S_0}=A'$.  

Further, for any $\gamma\in S$, $A_{S_0\cup\{\rho(\gamma)\}}$ contains
$A'$ and has at most the same dimension.  It follows that they are equal,
and hence that $A_S = A'$.  Let $\T$ be the intersection with
$\G$ of the $\Q$-subalgebra of $M_n(\Q)$ generated by $\rho(S)$.  This is
a $\Q$-subgroup of $\G$ containing $S$. This subgroup is contained in
$A'$.  That $\T$ is a torus follows from the fact that its
$C$-points are contained in $A'$, hence conjugate to a set of
diagonal matrices.
\end{proof}

\begin{lem}[checking can be done algebraically]
There exists (finite) sets of polynomials
$P_{ij}, Q_{ik} \in \Z[x_1,\ldots,x_{(n+1)n^2}]$
(to be thought of functions of $n+1$ matrices of size $n$),
such that $\{\gamma_l\}_{l=1}^{n+1}\subset\GQ$
all lie on a $\Q$-torus iff for some $i$, $P_{ij}(\rho(\vec{\gamma}))=0$
and $Q_{ik}(\rho(\vec{\gamma})) \neq 0$ hold for all $j,k$.
\end{lem}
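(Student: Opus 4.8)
The plan is to make the condition ``$\{\gamma_l\}$ lie on a common $\Q$-torus'' into a statement about the vanishing or non-vanishing of polynomials in the matrix entries, using exactly the argument of Lemma \ref{lem:gen-torus}. Recall that the proof of that lemma identifies the relevant $\Q$-torus with $\T = \G \cap A_S$, where $A_S$ is the $\Q$-subalgebra of $M_n(\Q)$ generated by $\rho(S)$, and the key structural facts are: (i) $\{\gamma_l\}$ lie on a common $\Q$-torus if and only if the $\C$-algebra generated by $\rho(\gamma_1),\dots,\rho(\gamma_{n+1})$ is commutative and consists of matrices simultaneously diagonalizable over $\C$ (equivalently, is a commutative semisimple subalgebra of $M_n(\C)$); (ii) since such a subalgebra has $\C$-dimension at most $n$, it is spanned by at most $n$ of the (finitely many) words in $\rho(\gamma_1),\dots,\rho(\gamma_{n+1})$ of length at most $n$ --- here one uses that a subalgebra of $M_n$ generated by given elements is already spanned by words of bounded length, because a strictly increasing chain of subspaces of $M_n$ has length $\le n^2$, and a spanning set of words closed under one more multiplication is all of the algebra. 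So there are only finitely many combinatorial ``shapes'' a witness for membership on a torus can take.

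Concretely, first I would enumerate all words $w_1,\dots,w_N$ in $\rho(\gamma_1),\dots,\rho(\gamma_{n+1})$ of length $\le n$; each $w_m$ is an $n\times n$ matrix whose entries are fixed polynomials in $\Z[x_1,\dots,x_{(n+1)n^2}]$. For each subset $I\subset\{1,\dots,N\}$ with $|I|\le n$ (these index the finitely many cases $i$ in the statement), form the system asserting: (a) the $w_m$ for $m\in I$ are linearly independent over $\C$ --- this is ``some $|I|\times|I|$ minor of the matrix whose rows list the entries of the $w_m$ is nonzero'', a non-vanishing condition, so after further splitting into sub-cases according to which minor we can take it as a single $Q_{ik}\ne 0$; (b) every word $w_{m'}$ (in particular every $\rho(\gamma_l)$ and every product $\rho(\gamma_l)\rho(\gamma_{l'})$, which suffices to force the span of $\{w_m: m\in I\}$ to be a commutative subalgebra) lies in the $\C$-span of $\{w_m : m\in I\}$ --- expressing this via the vanishing of all $(|I|+1)\times(|I|+1)$ minors gives the equations $P_{ij}=0$; and (c) the resulting algebra is diagonalizable, i.e. semisimple, which one encodes by requiring the trace form on this algebra to be nondegenerate --- a determinant of a Gram-type matrix whose entries are traces of products of the $w_m$, again a polynomial non-vanishing condition to be folded into the $Q_{ik}$. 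Over $\Q$, a commutative semisimple subalgebra of $M_n(\Q)$ whose complexification is diagonal is exactly (the $\Q$-points of) a maximal torus of $\GL_n$ restricted to $\G$, so this system being satisfiable for some $I$ is equivalent to the $\gamma_l$ lying on a common $\Q$-torus.

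The only real subtlety --- and the step I expect to be the main obstacle to writing cleanly rather than to the mathematics --- is bookkeeping: packaging ``linear independence of a chosen set, plus membership of all other generators, plus semisimplicity'' into a finite disjunction over $i$ of conjunctions ``$P_{ij}=0$ for all $j$ and $Q_{ik}\ne0$ for all $k$'', since each choice of which minor witnesses independence multiplies the number of cases. This is handled by taking the index $i$ to range over the (finite) set of pairs (subset $I$, choice of pivot minors), with the equality polynomials $P_{ij}$ being the larger minors that must vanish and the inequality polynomials $Q_{ik}$ being the chosen pivot minor together with the discriminant-type polynomial detecting semisimplicity. One must also check that all these polynomials can be taken with integer coefficients, which is immediate since matrix multiplication, minors, and traces are all defined over $\Z$. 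Finally one verifies the equivalence in both directions: if such $i,j,k$ exist then the span of $\{w_m:m\in I\}$ is visibly a commutative semisimple $\Q$-subalgebra of $M_n$ of dimension $\le n$ containing all $\rho(\gamma_l)$, hence (intersecting with $\G$) a $\Q$-torus through all of them by the argument in Lemma \ref{lem:gen-torus}; conversely, if the $\gamma_l$ lie on a common $\Q$-torus, then the $\Q$-algebra they generate is commutative, semisimple, of dimension $\le n$, so it is spanned by $\le n$ of the short words, and picking a maximal linearly independent such subset and the corresponding pivot minor supplies the required $i$.
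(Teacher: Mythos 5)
Your proof is correct in substance, but it takes a genuinely different route from the paper. The paper disposes of this lemma in two lines by writing ``the $\gamma_l$ lie in the centralizer of a regular semisimple element'' as a first-order formula over $\C$ with an existential quantifier (over the diagonalizing matrix $A$ and the regular diagonal $D$) and then invoking Chevalley's quantifier-elimination theorem for algebraically closed fields to produce the $P_{ij},Q_{ik}$ non-constructively. You instead build the polynomials by hand: enumerate words of bounded length in the $\rho(\gamma_l)$, detect linear independence and span membership by minors, and detect semisimplicity of the resulting commutative algebra by nondegeneracy of the trace form. The trade-off is real: the model-theoretic argument is shorter and transparently correct, but yields no control on the $P_{ij},Q_{ik}$ beyond their existence; your construction is longer but makes the degrees and coefficient sizes explicit, which is exactly what the downstream proof of Lemma \ref{diophantine1} consumes (the exponent $c$ bounding the total degrees and the Lipschitz constant $L$), so your version would make that lemma effective. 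One local imprecision to repair when writing this up: containing the generators and their pairwise products in $V=\mathrm{span}\{w_m:m\in I\}$ does not by itself make $V$ a commutative subalgebra; you should impose, for all $m,m'\in I$, both $w_mw_{m'}=w_{m'}w_m$ (polynomial identities) and $w_mw_{m'}\in V$ (vanishing of $(|I|+1)\times(|I|+1)$ minors). This is the same species of condition you already use, and with it both directions of the equivalence go through via Lemma \ref{lem:gen-torus} exactly as you describe.
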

\begin{proof}
The analysis of the previous Lemma shows that the desired statement
about $\rho(\vec{\gamma})$ is equivalent to the following formula in
the language of fields:
\begin{center}
{\parbox{4in}
  {There exist matrices $D,A\in\GL_n(\C)$ such that $D$ is diagonal
   with distinct entries and such that each $\rho(\gamma_k)$ commutes
   with $ADA^{-1}$.}
}
\end{center}
By Chevalley's Theorem on elimination of quantifiers for the theory
of algebraically closed field, this formula is equivalent (over $\C$)
to a one without quantifiers, which we may assume to be in the normal
form
$$ \bigvee_i \left( \bigwedge_j \left(P_{ij}(\rho(\vec{\gamma}))=0\right)
\bigwedge_k \left(Q_{ij}(\rho(\vec{\gamma}))\neq 0\right) \right)$$

We reiterate the point that the $\gamma_k$
generate a $\Q$-torus iff they generate a torus over $\C$, and that
the language of algebraically closed fields has no names for field
elements other than $0,1$ so that atomic formulas in it are equalities
and inequalities of rational polynomials, which without loss of
generality may be assumed integral.
\end{proof}

\paragraph*{Proof of the Diophantine Lemma \ref{diophantine1}.} 
We note first that the polynomials $P_{ij}, Q_{ik}$ were only constructed
from $\G$ and $\rho$.  Taking $c'<1$ we may assume that $\varepsilon<1$,
so that all the $\gamma$ are drawn from the $1$-neighbourhood of the
compact set $\Ominf(MA\cap\Ominf\Ominf^{-1})\Ominf^{-1}$.
Let $L$ be a bound for the
Lipschitz constant of the smooth functions $P_{ij}\circ\rho, Q_{ik}\circ\rho$
in this domain.  Again by compactness, there exists $b>0$ such that for
each $g\in\Ominf$ and each $\vec{t}\in(MA\cap\Ominf)^{n+1}$ for the $i$
for which our system of constraints holds for $g\vec{t}g^{-1}$,
and each $k$, $\left|Q_{ik}(g\vec{t}g^{-1})\right| > b$.

Fixing $g\in\Ominf$, now let $\vec{\gamma}$ from $S^{n+1}$,
and let $\vec{t}\in(MA\cap\Ominf)^{n+1}$ so that $\gamma_l$ is
$\varepsilon$-close to $gt_lg^{-1}$.  By the analysis above, there
exists $i$ such that $P_{ij}(\rho(\vec{t}))=0$ while $Q_{ik}(\rho(\vec{t}))$
are at least $b$ in magnitude.  It follows that the magnitude
$Q_{ik}(\rho(\vec{\gamma}))$ is at least $b-L\varepsilon$, which is positive
as long as we ensuer $c'<b/L$.  It also follows that the magnitude of
$P_{ij}(\rho(\vec{\gamma}))$ is at most $L\varepsilon$.
Let $c$ bound the total degree each $P_{ij}$.  Then the denominator
of each rational number $P_{ij}(\rho(\vec{\gamma}))$ is at most $M^c$.
If $c'<1/L$ this ensures that these rational numbers vanish.
\qed

\subsection{The intersection pattern of translates of tubes around Levi subgroups}\label{subsec:ctrltubes}

\begin{prop} \label{propertystar1}
Let $\G$ be $\R$-split.  Fix a relatively compact open neighbourhood
of the identity $C\subset A$ and assume $C\subset\Ominf$.
There are $c_2, c_4>0$, depending only on the isomorphism class
of $\G$ and $c_1, c_3 = O_{C}(1)$,
so that

For any $x = (x_{\infty}, \xf)  \in \Omega$ and any
$0 < \varepsilon < c_3$ there 
exists a $\Q$-subtorus $\T \subset \G$ so that: 
\begin{enumerate}
\item 
If $s,s' \in \GAf$
both have denominator $\leq c_1 \varepsilon^{-c_2}$ and are so that
$$\overline{ x B(C, \varepsilon) s} \cap \overline{ x B(C, \varepsilon) s'} \neq \emptyset \mbox{ in } \G(\Q) \backslash \GA/\Kf$$
then there exists
$\gamma \in \T(\Q) \subset \GQ$ with
\begin{equation} \label{intersect1} \gamma x B(C, \varepsilon) s  \cap x B(C, \varepsilon)  s' \neq \emptyset \mbox{ in } \GA/\Kf.\end{equation}

\item
 There at most $O(1-c_4\log\varepsilon)$ primes which are $\T$-bad.

\end{enumerate}
\end{prop}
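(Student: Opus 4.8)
The plan is to reduce statement (1) to the Diophantine Lemma \ref{diophantine1} by unwinding what the intersection $\overline{xB(C,\varepsilon)s}\cap\overline{xB(C,\varepsilon)s'}\neq\emptyset$ means. First I would lift the intersection to $\GA$: there exist $b,b'\in B(C,\varepsilon)$, a unit $\gamma\in\GQ$, and a $k\in\Kf$ with $\gamma\, x b s=x b' s' k$. Splitting into archimedean and finite components, the archimedean equation reads $\gamma x_\infty b=x_\infty b'$ (since $s,s'$ are finite adeles), so $\gamma=x_\infty b'b^{-1}x_\infty^{-1}$ lies within $O(\varepsilon)$ — after absorbing the Lipschitz constant of conjugation by the relatively compact $x_\infty\in\Omega_\infty$ — of the set $x_\infty(C C^{-1})x_\infty^{-1}\subset x_\infty(MA\cap\Ominf\Ominf^{-1})x_\infty^{-1}$, once $C$ is small enough that $CC^{-1}\subset MA\cap\Ominf\Ominf^{-1}$ (recall $Z_G(a)=MA$ since $a$ is regular). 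Meanwhile the finite equation $\gamma\, x_{\mathrm f}bs=x_{\mathrm f}b's'k$ with $b=b'=1$ at the finite places gives $\gamma=x_{\mathrm f}(b's'k)(bs)^{-1}x_{\mathrm f}^{-1}$; since $x_{\mathrm f}$, $\Kf$ and the $B_0$-part contribute bounded denominator and $s,s'$ have denominator $\leq c_1\varepsilon^{-c_2}$, the sub-multiplicativity and inversion bounds of Lemma (denominators on adelic groups) give $\denom(\gamma)\leq M$ with $M=O\!\left((c_1\varepsilon^{-c_2})^{n}\right)=c_1'\varepsilon^{-c_2'}$ for suitable $c_1',c_2'$.

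With these two facts in hand, I would invoke Lemma \ref{diophantine1}: choosing $c_2$ so that $\varepsilon^{c_2'}\cdot(\text{that lemma's }c)\le$ its $c'$ forces $\gamma$ into a fixed $\Q$-subtorus $\T$. The subtlety is that Lemma \ref{diophantine1} produces \emph{a priori} a different torus depending on $x_\infty$; but $x$ ranges over the fixed compact set $\Omega$, and the lemma's $\T$ is characterized (via Lemma \ref{lem:gen-torus} and its algebraic avatar) as the $\Q$-group attached to the $\Q$-algebra generated by $\rho(S)$, where $S$ is the set of all such $\gamma$ — so a single $\T$ works for all $s,s'$ of the prescribed denominator simultaneously for a given $x$, which is exactly what the statement asserts (the torus is allowed to depend on $x$). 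Once $\gamma\in\T(\Q)$, the lifted equation $\gamma xbs=xb's'k$ modulo $\Kf$ is precisely \eqref{intersect1}, proving (1). The constants $c_1,c_3$ depend on $C$ (through $CC^{-1}\subset\Ominf\Ominf^{-1}$ and the Lipschitz constants on $\Omega_\infty$), while $c_2,c_4$ depend only on the isomorphism class of $\G$ and $\rho$ — but since $\rho$ is among the fixed data, this is consistent with the convention of \S\ref{sec:notation}.

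For part (2), I would argue that $\T$ is generated as a $\Q$-group by finitely many elements $\gamma\in\GQ$ of denominator $\leq M=c_1'\varepsilon^{-c_2'}$ (indeed, by Lemma \ref{lem:gen-torus} at most $n+1$ of them suffice to pin down the algebra $A'$, hence $\T$). A prime $p$ is $\T$-bad only if it is bad for $\G$ — finitely many, contributing $O(1)$ — or if $p$ divides some denominator appearing in a generating set, equivalently $p\mid\denom(\gamma)$ for one of the generators, or $p$ divides the "discriminant" of the order $\rho(\T(\Q))\cap M_n(\Z[\tfrac1M])$ controlling whether $\T(\Q_p)\cap K_p$ is maximal compact. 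All these quantities are integers bounded by a fixed power of $M$, so the number of primes dividing them is $O(\log M)=O(c_2'\log(1/\varepsilon)+\log c_1')=O(1-c_4\log\varepsilon)$ for an appropriate $c_4$ depending only on $\G,\rho,n$. The main obstacle I anticipate is the careful bookkeeping in part (2): translating "$\T(\Q_p)\cap K_p$ is a maximal compact subgroup of $\T(\Q_p)$" into an explicit divisibility condition on an integer of size $\varepsilon^{-O(1)}$, since this requires knowing that the relevant bad locus of the family of tori (over the base $\Omega$, with bounded generators) is cut out by equations of controlled height — a point the algebraic reformulation via Chevalley's theorem in \S\ref{subsec:dl1} makes plausible but which needs to be spelled out.
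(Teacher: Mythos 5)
Your proposal follows the paper's proof essentially verbatim: lift the intersection to an element $\gamma\in\GQ$ of controlled denominator lying $O(\varepsilon)$-close to the conjugated real torus, apply Lemma \ref{diophantine1} to place the set of all such $\gamma$ in a single $\Q$-torus $\T$ (generated, as you say, by the algebra they span), and bound the $\T$-bad primes via a bounded set of integral generators with entries of size $\varepsilon^{-O(1)}$. The bookkeeping you flag in part (2) is carried out in the paper exactly along the lines you suggest, by requiring $p$ not to divide the discriminants of the characteristic polynomials of the rescaled generators $\denom(\gamma_i)\cdot\gamma_i\in M_n(\Z)$.
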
 

\begin{proof}
Let  $s,s' \in \GAf$ satisfy the intersection condition of item (1).
By assumption there is -- after replacing $s,s'$ by suitable elements
of $s \Kf$ and $s'\Kf$ respectively -- an element $\gamma \in \G(\Q)$
so that $\gamma \xf s = \xf s'$ (equality in $\GAf$) and, moreover,
$\gamma \in x_{\infty} B(C,\varepsilon) B(C, \varepsilon)^{-1} x_{\infty}^{-1}$
(equality in $\G(\R)$).  

The former equality implies, in particular, that
$\denom(\gamma) \ll c_1' \varepsilon^{-c_2'}$, where $c_i'$ depends on $c_i$,
and $c_i' \rightarrow 0$ as $c_i \rightarrow 0$ for $i=1,2$. 

Since conjugation by elements of a compact set is a map of bounded Lipchitz
constant \wrt the metric $\distG$, the latter inclusion shows that
$\gamma$ lies within $L\epsilon$ of
$x_\infty (MA\cap\Ominf\Ominf{^-1})x_\infty^{-1}$, where $L$ depends only
on $\G$ and $\Ominf$.

Let $R$ be the set of such $\gamma$, let $\T\subset\G$ be the closed subgroup
they generate, and let $E$ be the subalgebra of $M_n(\Q)$ generated by
$\rho(R)$.  If $c_1', c_2'$ are
sufficiently small -- this occurs, in particular, if $c_1, c_2$ are
sufficiently small -- then Lemma \ref{diophantine1} shows that $\T$ is a torus,
and $E$ is its linear span, a semisimple abelian subalgebra of $M_n(\Q)$.
Analyzing the reasoning shows that the exponent $c_2$ may be taken to depend
only on the isomorphism class of $\G$, whereas $c_1 = O(1)$.
This proves \eqref{intersect1}.

For a $\G$-good prime $p$ to be $\T$-good, it suffices that
$E_p \cap M_n(\Z_p)$ is a maximal compact subring of
$E_p = E\otimes\Q_p \subset M_n(\Q_p)$.  For this it suffices to have
generators $\{\gamma'_i\}_{i=1}^{n}\subset M_n(\Z)$ for $E$ as a $\Q$-algebra
such that $\Z_p[\gamma'_1,\ldots,\gamma'_d] = E_p \cap M_n(\Z_p)$.  That
will happen as long as $p$ does not divide the discriminant of the
characteristic polynomial of each $\gamma'_i$.

By the proof of Lemma \ref{lem:gen-torus}, there exist
$\{\gamma_i\}_{i=1}^{n}\subset R$ which generate $E$, and let
$\gamma'_i = \denom(\gamma_i) \cdot \gamma_i$.  Then $\gamma'_i \in M_n(\Z)$,
still generate $E$ as a $\Q$-algebra.  Next, since
$\gamma_i \in \Ominf\Ominf\Ominf^{-1}\Ominf^{-1}$ and as $\rho$ is continuous,
the matrix entries of $\gamma'_i$ are $O(\epsilon^{-c_2'})$.  Finally,
the discriminant of $\gamma'_i$ is a polynomial in the coefficients
of its characteristic polynomial, themselves polynomials in the matrix entries
of $\gamma'_i$.

It follows that the set of $\G$-good but $\T$-bad primes is contained
in the set of prime divisors of an integer bounded by $O(\epsilon^{-O(1)})$.
\end{proof}

\paragraph{Generalizations}
Proposition \ref{propertystar1} is a statement of the following type:
\begin{quote}
Let $H\subset G$ be a closed subgroup.  Then given $x\in \Ominf$
and a tubular neighbourhood $B(C,\varepsilon)$ of a piece $C\subset H$,
there exists a $\Q$-subgroup $\T \subset \G$ such that
intersection of Hecke translates of small denominator of $xB(C,\varepsilon)$
are controlled by $\T$, in the sense that if $xBs$ and $xBs'$ intersect
in $X$, there exists $\gamma \in \T(\Q)$ such that $\gamma xBs\Kf = xBs'\Kf$
holds in $\GA/\Kf$.
\end{quote}

We have established this for $\G$ which is $\Q$-anisotropic and $\R$-split
and $H$ a maximal $\R$-split torus.  Specializing further to the case of
$\G$ associated to a division algebra of prime degree, we establish a
result of this type for any Levi subgroup $H\subset G$
(that is, for a subgroup of the form $H=Z_G(a)$, $a\in A$).
It turns out that the subgroup $\T$ remains a torus.

In general one would expect that points lying near pieces of orbits
of Levi subgroups defined over $\R$ to lie on some like an orbit
of a Levi subgroup defined over $\Q$.  This is not quite correct, but
precise versions of this intuition can be proved; see the work 
\cite[\S4]{Marshall:KsmallLinftyBds}.

\subsection{Extra notations for the case of division algebras}
\label{extranotations}

Let $D$ be a division algebra over $\Q$ of prime degree $d$,
and fix a lattice $D_{\Z} \subset D$
(\ie a free $\Z$-submodule of maximal rank)
and a Euclidean norm $\|\cdot\|$ on $D \otimes_{\Q} \R$.
In other words, we have chosen a norm on $D \otimes \Q_v$ for all $v$,
finite or infinite.  Since the only central division algebras over $\R$
are $\R$ itself and Hamilton's quaternions, assuming $d\geq 3$ ensures
that $D\otimes\R$ is the full matrix algebra.

We will consider the case where $\G$ is the projectivized group of units
(=invertible elements) of $D$,
so (for $d\geq 3$) $G = \G(\R)$ is isomorphic to $\PGL_d(\R)$.
The Lie algebra of $G$ is identified with a quotient
of $D \otimes \R$; as such, the norm on $D \otimes \R$ gives rise to a norm
on the Lie algebra of $G$ and thus to a left-invariant Riemannian metric on $G$.

We fix extra data ($\rho, \Kf, \Omega, \Ominf$)
for the group $\G$, as discussed in \S \ref{sec:notation}.
In the rest of this paper, when discussing this case the implicit constants
in the notations $\ll$ and $O(\cdot)$ will be allowed to depend on
$D, D_{\Z}$, the norm $\|\cdot\|$ and this extra data,
without explicitly indicating this. 

It should be noted that we do not assume that
$D_{\Z} . D_{\Z} \subset D_{\Z}$;
on the other hand, clearly there is an integer $K  = O(1) $ so that
$D_{\Z} . D_{\Z} \subset K^{-1} D_{\Z}$.

\subsection{A diophantine lemma for $\Q$-algebras}\label{subsec:dl2}

In this section only we shall use an additional notion of denominator,
special to the case of $\Q$-algebras.  We fix a central simple
$\Q$-algebra $D$ of dimension $d^2$ and a $\Z$-lattice $D_\Z \subset \Q$.
Given $x \in D$, we set
\begin{equation}\label{denomdef}
\denp(x) := \inf\{m \in \N: \ mx \in D_{\Z}\}.
\end{equation} 

Recall that for $\gamma \in \G(\Q) = D^{\times}/\Q^{\times}$,
we also have the denominator
$\denom(\gamma)$ defined in Section \ref{sec:adeldenom}.
We first clarify the relation between the two notions.

\begin{lem} \label{denominators}
Let $\gamma \in \G(\Q) = D^{\times}/\Q^{\times}$ satisfy $\denom(\gamma) \leq M$
and belong to a compact subset $\cmpct \subset \G(\R)$.  Then there exists
$\alpha \in D^{\times}$ lifting $\gamma$ so that:
$$\denp(\alpha) \ll_{\cmpct} M^{c}, \|\alpha^{-1}\| ,
  \|\alpha\| \ll_{\cmpct} 1$$
where $c$ is a constant depending only on the isomorphism class of $\G$. 
\end{lem}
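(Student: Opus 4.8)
The plan is to unwind the two notions of denominator through the embedding $\rho$ and a choice of integral model. Fix an integral basis $e_1,\dots,e_{d^2}$ of $D_\Z$; with respect to this basis left multiplication by an element $x\in D$ is a matrix $L(x)$, and we may take $\rho$ to be (a projectivization of) the regular representation, so that $\denom(\gamma)$ is essentially the l.c.m.\ of the denominators of the entries of $L(\alpha)/\Q^\times$ for any lift $\alpha$. The first step is to choose the right lift $\alpha$ of $\gamma$: given any lift $\alpha_0$, the denominator $\denom(\gamma)\le M$ means there is a rational scalar $\lambda$ with $\lambda L(\alpha_0)=L(\lambda\alpha_0)$ integral (entries in $\Z$ in the chosen basis) and with the g.c.d.\ of the entries equal to $1$ up to a bounded factor. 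Replacing $\alpha_0$ by $\alpha:=\lambda\alpha_0$ we then have $L(\alpha)\in M_{d^2}(\Z)$ with content $O(1)$, hence $\denp(\alpha)=O(1)$ after absorbing the discrepancy between $D_\Z\cdot D_\Z$ and $D_\Z$ (the integer $K=O(1)$ of \S\ref{extranotations}); more precisely $\alpha\cdot D_\Z\subset O(1)\cdot D_\Z$ forces $\denp(\alpha)=O(1)$, and a fortiori $\denp(\alpha)\ll M^{c}$ for any $c\ge 0$.

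The content of the lemma is therefore really the norm bounds $\|\alpha\|,\|\alpha^{-1}\|\ll_\cmpct 1$, and this is where the hypothesis $\gamma\in\cmpct$ enters. Here I would argue as follows. The element $\gamma\in\G(\R)=\PGL_d(\R)$ lies in the compact set $\cmpct$; lift it to $g\in\GL_d(\R)$ (equivalently, to an element of $(D\otimes\R)^\times$) normalized so that $\|g\|\asymp 1$ and $\|g^{-1}\|\asymp 1$, with implied constants depending only on $\cmpct$ — this is possible precisely because $\cmpct$ is compact and the quotient map $\GL_d(\R)\to\PGL_d(\R)$ admits bounded local sections. Now $\alpha$ and this normalized lift $g$ differ by a real scalar $\mu$: $\alpha=\mu g$ in $D\otimes\R$. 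It remains to bound $|\mu|$ above and below. For this I use the reduced norm $\mathrm{Nrd}:D^\times\to\Q^\times$ (a homogeneous polynomial of degree $d$ in the coordinates, with integer coefficients on $D_\Z$ up to bounded denominators): $\mathrm{Nrd}(\alpha)=\mu^d\,\mathrm{Nrd}(g)$, the right-hand factor $\mathrm{Nrd}(g)$ is $\asymp 1$ by the normalization of $g$, and the left-hand side is a nonzero rational number whose numerator and denominator are controlled — the denominator by $\denp(\alpha)=O(1)$ (since $\mathrm{Nrd}(\alpha)$ is a polynomial of bounded degree in the integral coordinates of $\alpha$, hence an integer up to an $O(1)$ denominator once $\alpha\in O(1)\cdot D_\Z$), and the numerator being a nonzero integer is $\ge 1$ in absolute value. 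Thus $|\mathrm{Nrd}(\alpha)|\asymp 1$, whence $|\mu|\asymp 1$, and therefore $\|\alpha\|=|\mu|\,\|g\|\ll_\cmpct 1$ and likewise $\|\alpha^{-1}\|=|\mu|^{-1}\|g^{-1}\|\ll_\cmpct 1$.

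The main obstacle I anticipate is purely bookkeeping: keeping straight the three sources of "$O(1)$" — the comparison of the chosen Euclidean norm $\|\cdot\|$ on $D\otimes\R$ with the operator norm pulled back through $\rho$, the factor $K$ measuring the failure of $D_\Z$ to be a ring, and the difference between "denominator of $\rho(\gamma)$ as a projective matrix" and "$\denp$ of an honest lift" — and checking that each of these can be absorbed into constants depending only on $\G,\rho,D,D_\Z,\|\cdot\|$. One should also note that the exponent $c$ in the statement is generous: in fact $\denp(\alpha)=O(1)$, and the weaker bound $\ll M^c$ is retained only so that the lemma dovetails with the statements of Proposition \ref{propertystar1} and the subsequent Lemma \ref{diophantine2}, where $M$-dependence is the natural currency. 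The reduced-norm argument for the lower bound on $|\mu|$ is the one genuinely non-formal point, and it uses in an essential way that $D$ is a division algebra (so $\mathrm{Nrd}(\alpha)\ne 0$ automatically).
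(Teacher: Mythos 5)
There is a genuine gap, and it sits exactly at the point your proposal dismisses as "generous": the claim $\abs{\mathrm{Nrd}(\alpha)}\asymp 1$. Your integrality argument gives only the lower bound $\abs{\mathrm{Nrd}(\alpha)}\gg 1$ (nonzero integer numerator, bounded denominator); it gives no upper bound, because the numerator of $\mathrm{Nrd}(\alpha)$ can be arbitrarily large for a primitive integral lift. Consequently you obtain $\abs{\mu}\gg 1$ and hence $\|\alpha^{-1}\|\ll_{\cmpct}1$, but not $\|\alpha\|\ll_{\cmpct}1$. And the missing bound is genuinely false for your choice of $\alpha$: take $\gamma$ to be the class of $N+\beta$ with $N\in\Z$ large and $\beta\in D_{\Z}$ fixed non-central. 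Then $\gamma$ tends to the identity in $\G(\R)$ (so lies in a fixed compact set), its primitive integral lift has norm $\asymp N$, and $\denom(\gamma)$ is of size roughly $N^{d}$ or so. No lift with $\denp(\alpha)=O(1)$ can have $\|\alpha\|\asymp 1$ here, since rescaling the primitive lift to norm $\asymp 1$ requires dividing by roughly $N$. The two conditions $\denp(\alpha)=O(1)$ and $\|\alpha\|\asymp 1$ are thus incompatible in general; the exponent $c$ in $\denp(\alpha)\ll M^{c}$ is not slack but is the entire content of the lemma. A telltale sign is that your argument never actually uses the hypothesis $\denom(\gamma)\leq M$.

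What is really needed, and what the paper's proof supplies, is a \emph{height} bound on a lift in terms of $\denom(\gamma)$: one must produce $\alpha$ all of whose coordinates have numerators and denominators $\ll M^{c}$ (equivalently, the primitive integral lift has $\|\alpha_1\|\ll M^{c}$), after which rescaling to norm $\asymp 1$ costs only a rational factor of denominator $\ll M^{c}$, yielding $\denp(\alpha)\ll M^{c}$. The paper does this by showing that $\Gtwid\to\G$ (i.e.\ $D^{\times}\to D^{\times}/\Q^{\times}$) admits algebraic sections over a finite Zariski-open cover of $\G$ — a Hilbert 90 argument — so that a lift can be written as a fixed rational function of the entries of $\rho(\gamma)$, whose height is polynomial in $M$ because $\gamma$ lies in $\cmpct$ and has denominator at most $M$. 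Your proposal contains no substitute for this step. (Your reduced-norm observation is still a clean way to get the lower bound $\|\alpha\|\gg 1$, hence $\|\alpha^{-1}\|\ll 1$, once the correct $\alpha$ is in hand, but it cannot produce the upper bound.)
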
 

\begin{proof} 
In fact, let $\Gtwid$ be the algebraic group corresponding to $D^{\times}$, \ie
$\Gtwid(R) = (D \otimes_{\Q} R)^{\times}$ if $R$ is a ring containing $\Q$.

Then $\Gtwid$ and $\G$ are affine algebraic groups. We first show that
the map $\Gtwid \rightarrow \G$
\emph{admits an algebraic section over a Zariski-open set $U \subset \G$}.

Let $\G^{(1)}$ denote the group of elements of norm $1$ in $D^{\times}$.
It is a (geometrically) irreducible variety, because $\mathrm{SL}_d$
is an irreducible variety. The map $\G^{(1)} \rightarrow \Gtwid$ is a
covering map (i.e. {\'e}tale)
and its kernel is the group of $d$th roots of unity.
Let $E$ be the function field of $\G$, considered as $\Q$-variety. 
The generic point in $\eta \in \G(E)$ does not lift to a point of
$\G^{(1)}(E)$, but it does at least to lift to a point of
$\tilde{\eta} \in  \G^{(1)}(\tilde{E})$ for some finite
extension $\tilde{E}/E$, which we may assume to be Galois
and to contain the $d$th roots of unity.
Then $\sigma \mapsto \tilde{\eta}^{\sigma} / \tilde{\eta}$ defines
a $1$-cocycle of $\Gal(\tilde{E}/E)$ valued in the group of $d$th
roots of unity. By Hilbert's theorem 90, there exists
$\tilde{e} \in \tilde{E}$ so that this cocycle is
$\sigma \mapsto \tilde{e}^{\sigma}/\tilde{e}$. 
Adjusting $\tilde{\eta}$ by $\tilde{e}$ gives a $\tilde{E}$-valued point
of $\Gtwid$, which is invariant under $\Gal(\tilde{E}/E)$ and therefore
is indeed an $E$-valued point of $\Gtwid$. This gives the desired section.

One may, by translating $U$, find a finite collection of open sets
$U_1, \dots, U_h$ which cover $\G$, and so that $\Gtwid \rightarrow \G$
admits a section $\theta_j: U_j \rightarrow \Gtwid$ over each $U_j$.  
 
It follows from this that there exists $\alpha \in D^{\times}$
lifting $\gamma$ so that $$\denp(\alpha), \denp(\alpha^{-1})  \ll M^{c}$$
where $c$ is a constant depending only on the choice of
sets $U_j$ and the sections, i.e. only the isomorphism class of $\G$. 

From this bound, it follows in particular that
$M^{-c} \ll_\cmpct \|\alpha\| \ll_{\cmpct} M^{c}$.
The lower bound is clear;
for the upper bound, we use the fact that $\alpha$ projects
to the compact subset $\cmpct \subset \G(\R)$. 

Let $p/q$ be a rational number satisfying $\|\alpha\| < p/q < 2 \|\alpha\|$.
We may choose $p,q$ so that $\max(p,q) \ll M^c$.
Replacing $\alpha$ by $q \alpha/p$, 
we obtain a representative $\alpha$ for $\gamma$ that satisfies:
$$\denp(\alpha) \ll_{\cmpct} M^{2c}, \|\alpha\| \asymp_{\cmpct} 1$$
We increase $c$ as necessary.

Finally, the bound for $\|\alpha^{-1}\|$ follows from the bound
for $\|\alpha\|$ together with the fact that $\alpha$ projects
to the compact set $\cmpct \subset \G(\R)$. 
\end{proof}

The following should be compared with Lemma \ref{diophantine1}.

\begin{lem} \label{diophantine2}
Let $S \subset D \otimes \R$ be a proper $\R$-subalgebra. 

For $c >0$ sufficiently large (in fact, depending only on $d$) and
for $c'>0$ sufficiently small (in fact, depending only on $D, D_{\Z},
\|\cdot\|$), the set of $x \in D$ satisfying 
\begin{equation} \label{xsetdef}
\|x\| \leq R, \inf_{s \in S} \|x-s\| \leq \varepsilon,
\denp(x) \leq M \end{equation}
is contained in a proper subalgebra $F \subset D$ as long as
\begin{equation} \label{cond} \varepsilon R^{c} M^{c} < c'\end{equation}
\end{lem}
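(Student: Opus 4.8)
The plan is to follow the strategy of the toy model and of Lemma \ref{diophantine1}: show that any ``product expression'' in elements of the set $S_{R,\varepsilon,M}$ (the set of $x\in D$ satisfying \eqref{xsetdef}) which is forced to vanish for elements of the subalgebra $S$ is a rational number (in suitable coordinates) with controlled denominator and controlled archimedean size, hence vanishes outright once $\varepsilon$ is small enough relative to $R$ and $M$. First I would fix a $\Q$-basis of $D$ contained in $D_{\Z}$, so that elements of $D$ have ``coordinates'' that are rational numbers; in these coordinates $\denp(x)\le M$ forces each coordinate to have denominator $O(M)$, and multiplication in $D$ is given by bilinear forms with bounded rational (in fact, after clearing the bounded denominator $K$ with $D_{\Z}D_{\Z}\subset K^{-1}D_\Z$, integer) coefficients.

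Next I would reduce to checking finitely many elements at a time, exactly as in Lemma \ref{lem:gen-torus}: the $\R$-subalgebra of $D\otimes\R$ generated by a set is determined by a bounded number of its elements, so it suffices to show that any $d^2+1$ (or fewer) elements of $S_{R,\varepsilon,M}$ lie in a common proper subalgebra. For a fixed collection $x_1,\dots,x_{d^2+1}$, the property ``the $\R$-algebra they generate is proper'' is, by quantifier elimination over $\C$ (Chevalley, as in the ``checking can be done algebraically'' lemma), equivalent to a boolean combination of polynomial equalities $P_{ij}(\vec x)=0$ and inequalities $Q_{ik}(\vec x)\ne0$ with integer-coefficient polynomials depending only on $d$; concretely one can take the $P_{ij}$ to cut out the locus where the multiplication table of the generated algebra drops rank, i.e. where some explicit $(k\times k)$-minors of the matrix of left-multiplications vanish. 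I would then argue by compactness (using $\|x_i\|\le R$, but note $R$ will enter only polynomially): there is a $b>0$ such that on the piece of $S$ one is approximating, for the relevant index $i$ all $|Q_{ik}|>b$ while all $P_{ij}=0$ identically; since the $P,Q$ are Lipschitz on the ball of radius $R$ with constant $L=O(R^{O(1)})$, the approximating $x_i\in S_{R,\varepsilon,M}$ satisfy $|Q_{ik}(\vec x)|>b-L\varepsilon>0$ and $|P_{ij}(\vec x)|\le L\varepsilon$. But $P_{ij}(\vec x)$ is a rational number whose denominator is bounded by $(K\cdot\mathrm{lcm\ of\ the\ }\denp(x_l))^{\deg P_{ij}}\ll M^{c}$ (after multiplying through by the bounded-denominator factors coming from $K$), so $|P_{ij}(\vec x)|<M^{-c}$ forces it to vanish. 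Choosing $c$ to bound the degrees and $c'$ small enough that $\varepsilon R^cM^c<c'$ makes $L\varepsilon$ small and makes $M^{-c}$ beat the denominator bound; then all $x_l$ lie in a common proper subalgebra, and Lemma \ref{lem:gen-torus}-style patching upgrades this to all of $S_{R,\varepsilon,M}$ lying in one proper subalgebra $F$.

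The main obstacle I expect is bookkeeping the dependence on $R$: unlike Lemma \ref{diophantine1}, here there is a separate norm bound $R$ that is not absorbed into a fixed compact set $\Ominf$, so the Lipschitz constants of the $P_{ij},Q_{ik}$ and the archimedean sizes of intermediate products grow with $R$, and one must check that this growth is only polynomial in $R$ (which it is, since the $P,Q$ have bounded degree) so that it can be swallowed by the factor $R^c$ in \eqref{cond}. A secondary subtlety is that $D_{\Z}$ need not be a subring, handled throughout by the fixed integer $K$ with $D_{\Z}D_{\Z}\subset K^{-1}D_{\Z}$, which only changes the implied constants and the value of $c'$. Everything else — quantifier elimination, the ``enough to check bounded subsets'' reduction, and the denominator-versus-size dichotomy — is a direct transcription of the arguments already given for Lemma \ref{diophantine1}.
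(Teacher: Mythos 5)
Your overall strategy -- approximate by elements of $S$, observe that an integral polynomial identity holding on $S$ nearly holds on the given points, and then use the denominator bound to force exact vanishing -- is the right one and is indeed what the paper does. But the specific mechanism you propose has a gap. You run the condition ``the $\R$-algebra generated by $x_1,\dots,x_{d^2+1}$ is proper'' through Chevalley quantifier elimination, obtaining disjuncts of the form $P_{ij}=0$, $Q_{ik}\neq 0$, and you then need a uniform lower bound $b>0$ on the $|Q_{ik}|$ for the active disjunct over the piece of $S$ being approximated. In Lemma \ref{diophantine1} this works because everything lives in the fixed compact set $\Ominf$; here the domain is the ball of radius $R$ with $R$ a free parameter, so compactness only yields some $b(R)>0$ with no control on its decay in $R$ -- and your parenthetical claim that ``$R$ enters only polynomially'' is justified for the Lipschitz constants but not for $b$. (There is also the secondary issue that the active disjunct $i$ varies with the point of $S$, and the relevant $Q_{ik}$ can degenerate to $0$ as one approaches the boundary between disjuncts, so even on a compact set the uniform lower bound needs an argument.) Since condition \eqref{cond} must beat $b(R)^{-1}$ as well as $L\varepsilon$ and $M^{-c}$, the argument as written does not close.

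The fix is to notice that, unlike the torus condition in Lemma \ref{diophantine1}, the condition you actually need here is Zariski-\emph{closed}, so no inequations $Q_{ik}\neq 0$ (and hence no lower bound $b$) are required. This is exactly how the paper proceeds: with $s=\dim_{\R}(S)+1$ it uses the single integral polynomial $G$ (a sum of squares of minors) whose vanishing detects that $s$ elements of $D$ span a space of dimension $\leq s-1$; then $|G(\vec{x})|\ll R^{O(1)}\varepsilon$ from the approximation and $|G(\vec{x})|\gg M^{-O(1)}$ whenever $G(\vec{x})\neq 0$, so \eqref{cond} forces $G(\vec{x})=0$ and any $s$ points of the set are $\Q$-linearly dependent. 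The passage from linear spans to algebras is then handled not by your tuple-wise algebra condition but by applying the same estimate to all monomials of length $\leq\dim_{\Q}D$ in the given points: each such monomial again satisfies \eqref{xsetdef} with $R,\varepsilon,M$ replaced by fixed powers (here one uses that $S$ is a subalgebra, and the constant $K$ with $D_{\Z}D_{\Z}\subset K^{-1}D_{\Z}$), so the $\Q$-algebra they span has dimension $\leq s-1<d^2$ and is proper. If you replace your quantifier-elimination step by this closed-condition argument, the rest of your plan (the bounded-subset reduction and the patching) goes through.
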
 

In other words: points of $D$ near a proper subalgebra of
$D \otimes \R$ lie on a proper $\Q$-subalgebra of $D$.
This proof will not use the fact that $D$ is a {\em division} algebra,
nor the fact that it is of prime rank.

\begin{proof}
We use here $f_1(d), f_2(d), \dots$
to denote positive quantities that depend on the rank $d$ alone. 

Let $s = \dim(S)+1$. Then there is a polynomial function
$G: D^{s} \rightarrow \Q$, with integral coefficients with respect
to $D_{\Z}$, so that $G(\alpha_1, \dots, \alpha_s ) = 0$
exactly when $\alpha_1, \dots, \alpha_s$ span a linear space
of dimension $\leq s-1$.  For example one may use the sum of the squares
of the minors of a suitable matrix.
The degree of $G$ is $f_1(d)$ and the size of its coefficients is $O(1)$. 

Take $x_1, \dots, x_s$ belonging to the set defined by \eqref{xsetdef}.
There are $y_1, \dots, y_s \in S$ so that $\|x_i - y_i\| \leq \varepsilon$.
Then $G(x_1, \dots, x_s) \ll R^{f_2(d)} \varepsilon$.
On the other hand, if $G(x_1, \dots, x_s) \neq 0$ then,
because $\denom(x_i) \leq M$, we must have
$G(x_1, \dots, x_s) \gg M^{-f_3(d)}$.
It follows that, if a condition of the type \eqref{cond} holds
for suitable $c,c'$ as stated,
then $x_1, \dots, x_s$ span a $\Q$-linear space of dimension $s-1$. 

Now let $X$ be the $\Q$-algebra spanned by those $x$
satisfying \eqref{xsetdef}. It is clear that $X$ is,
in fact, spanned by monomials in such $x$ of length at
most $\dim_{\Q} D$.  Each such monomial $y$ satisfies
$\|y\| \ll R^{f_4(d)}, \inf_{s\in S} \|y-s\| \ll R^{f_5(d)} \varepsilon,
\denom(y) \gg M^{f_6(d)}$.
It follows that -- increasing $c$ and decreasing $c'$
in \eqref{cond} as necessary -- it follows that the $\Q$-subalgebra
generated by all solutions to \eqref{xsetdef} has dimension $\leq s-1$,
in particular, is a proper subalgebra of $D$. \end{proof}

\begin{prop} \label{propertystar2}
Let $\G$ be the projectivized group of units of a division algebra $D/\Q$
of prime degree $d$.  There are $c_2, c_4> 0$, 
depending only on the isomorphism class of $\G$ and $c_1, c_3 = O_{C}(1)$,
so that

For any $x = (x_{\infty}, \xf)  \in \Omega$ and any
$0 < \varepsilon < 1/2$ there 
exists a subfield $F \subset D$ so that: 
\begin{enumerate}
\item 
If $s,s' \in \GAf$
both have denominator $\leq c_1 \varepsilon^{-c_2}$ and are so that
$$x \overline{B(C, \varepsilon) s} \cap \overline{ x B(C, \varepsilon)  s'} \neq \emptyset \mbox{ in } \G(\Q) \backslash \GA/\Kf$$
then there exists
$\gamma \in F^{\times}/\Q^{\times} \subset \G(\Q)$ with
 \begin{equation} \label{intersect2} \gamma x B(C, \varepsilon) s  \cap x B(C, \varepsilon)  s' \neq \emptyset \mbox{ in } \GA/\Kf.\end{equation}

\item
 $F$ is generated
 by $\alpha \in D^{\times}$, so that $\alpha D_{\Z} + D_{\Z} \alpha  \subset D_{\Z}$, and with $\|\alpha\| \leq  c_3 \varepsilon^{-c_4}$. 

\end{enumerate}
\end{prop}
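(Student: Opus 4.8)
The plan is to run the argument of Proposition~\ref{propertystar1} essentially unchanged, but with the $\Q$-algebra Diophantine Lemma~\ref{diophantine2} in place of Lemma~\ref{diophantine1}, and then to use the hypothesis that $D$ is a division algebra of prime degree $d$ to upgrade the resulting proper $\Q$-subalgebra to a subfield and to extract for it a single generator of controlled size. A feature of this route is that we only need $a\neq 1$ (so that $Z_G(a)$ is a \emph{proper} Levi), not $a$ regular: the reason $D$ admits so few subalgebras is exactly what lets us dispense with the quantifier-elimination machinery of \S\ref{subsec:dl1}. Fix $x=(x_\infty,\xf)\in\Omega$. We choose $c_2>0$ small depending only on $d$, then $c_1>0$ small depending (through the compact sets below) only on $C$. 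If $c_1\varepsilon^{-c_2}<1$ then, since $\denom\geq1$ everywhere, no $s\in\GAf$ has $\denom(s)\leq c_1\varepsilon^{-c_2}$, so the hypothesis of item~(1) is vacuous; in that case take $F$ to be a fixed maximal subfield of $D$ with a fixed generator $\alpha_0$ satisfying $\alpha_0 D_\Z+D_\Z\alpha_0\subset D_\Z$ and $\|\alpha_0\|=O(1)$, which verifies (2) since $\varepsilon<\tfrac12$ forces $\varepsilon^{-c_4}\geq1$. So we may assume $c_1\varepsilon^{-c_2}\geq1$, i.e.\ $\varepsilon\leq c_1^{1/c_2}$; taking $c_1$ small then forces $\varepsilon$ to be as small as we wish, which is what makes Lemma~\ref{diophantine2} applicable.

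Now, exactly as at the start of the proof of Proposition~\ref{propertystar1}: each pair $s,s'$ as in item~(1) yields, after replacing $s'$ by $s'k$ for a suitable $k\in\Kf$ (which leaves $\denom(s')$ unchanged), an element $\gamma\in\GQ$ with $\gamma\xf s=\xf s'$ in $\GAf$ and $\gamma=x_\infty b'b^{-1}x_\infty^{-1}$ in $\G(\R)$ for some $b,b'\in B(C,\varepsilon)$. The first relation together with the elementary estimates on $\denom$ from \S\ref{sec:adeldenom} and $\denom(\xf)=O(1)$ gives $\denom(\gamma)\ll\varepsilon^{-O(c_2)}$; the second, together with $b,b'$ lying within $\varepsilon$ of $C\subset Z_G(a)$ and the bounded Lipschitz constant of conjugation on compacta, shows that $\gamma$ lies in a fixed compact $\cmpct\subset\G(\R)$ and within $O(\varepsilon)$ of $x_\infty\bigl(Z_G(a)\cap\Ominf\Ominf^{-1}\bigr)x_\infty^{-1}$ in $\distG$. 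By Lemma~\ref{denominators} each such $\gamma$ lifts to some $\alpha_\gamma\in D^\times$ with $\denp(\alpha_\gamma)\ll\varepsilon^{-O(c_2)}$ and $\|\alpha_\gamma\|,\|\alpha_\gamma^{-1}\|\ll_C1$. Finally, passing from the group to the algebra: the centralizer in $D\otimes\R=M_d(\R)$ of a lift of $a$ is a \emph{proper} $\R$-subalgebra $W$ (proper precisely because $a\neq1$, so the lift is non-scalar), and a routine comparison of the left-invariant metric $\distG$ with the linear structure of $D\otimes\R$ on a neighbourhood of $W^{\times}$ shows that, after matching norm normalisations, $\alpha_\gamma$ lies within $O(\varepsilon)$ of the proper $\R$-subalgebra $x_\infty W x_\infty^{-1}\subset D\otimes\R$.

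Collect all the $\alpha_\gamma$ arising from admissible pairs $(s,s')$ into a set $R_0\subset D^\times$. Each of its elements has $\|\cdot\|\leq R:=O_C(1)$, distance $\ll\varepsilon$ from the fixed proper $\R$-subalgebra $x_\infty W x_\infty^{-1}$, and $\denp(\cdot)\leq M:=\varepsilon^{-O(c_2)}$; hence $\varepsilon R^{c}M^{c}=O_C(\varepsilon^{1-O(c_2)})$, which is $<c'$ once $c_2$ is chosen small enough that $1-O(c_2)>0$ and $c_1$ (hence the bound $\varepsilon\leq c_1^{1/c_2}$) is small enough. Lemma~\ref{diophantine2} then puts $R_0$ inside a proper $\Q$-subalgebra $F_0\subset D$. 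Since $D$ is a division algebra every subalgebra of $D$ is a division algebra, and since $d$ is prime the only proper ones are $\Q$ and the maximal subfields, of degree $d$: a subfield has $\Q$-degree dividing $d$, hence $1$ or $d$, while a noncommutative proper division subalgebra would by the double centralizer theorem have $\Q$-dimension a proper divisor of $d^2$ of the shape $km^2$ with $m\geq2$, impossible for $d$ prime. Put $F:=F_0$ if $F_0$ is a maximal subfield, and otherwise $F:=$ a fixed maximal subfield. For every admissible pair, $\gamma$ has $\alpha_\gamma\in F_0\cap D^\times\subset F^\times$, so $\gamma\in F^\times/\Q^\times$, and the identity $\gamma(x_\infty b,\xf s)=(x_\infty b',\xf s')$ exhibits a point of $\gamma x B(C,\varepsilon)s\cap x B(C,\varepsilon)s'$; this proves item~(1).

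For item~(2), in the cases where $F$ was taken fixed we take $\alpha=\alpha_0$. Otherwise $F=F_0$ is generated over $\Q$ by $R_0$, hence, as in the proof of Lemma~\ref{diophantine2}, by finitely many monomials $y_1,\dots,y_m$ of length $\leq\dim_\Q D$ in elements of $R_0$; using $D_\Z\cdot D_\Z\subset K^{-1}D_\Z$ these satisfy $\|y_i\|\ll1$ and $\denp(y_i)\ll\varepsilon^{-O(c_2)}$. Since $[F:\Q]=d$, some $\Z$-combination $\tilde\alpha=\sum_i n_iy_i$ with $|n_i|=O_d(1)$ is a primitive element, and then $\|\tilde\alpha\|\ll1$ and $\denp(\tilde\alpha)\leq\mathrm{lcm}_i\,\denp(y_i)\ll\varepsilon^{-O(c_2)}$. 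Put $\alpha:=K\cdot\denp(\tilde\alpha)\cdot\tilde\alpha$: it still generates $F$, it has $\|\alpha\|\ll\varepsilon^{-O(c_2)}$, and $\alpha D_\Z+D_\Z\alpha\subset D_\Z$ because $\tilde\alpha\in\denp(\tilde\alpha)^{-1}D_\Z$ and $D_\Z\cdot D_\Z\subset K^{-1}D_\Z$; taking $c_4$ to be this exponent (which depends only on $d$) and $c_3=O_C(1)$ the implied constant completes the proof. The step demanding real care is the geometric comparison in the second paragraph — translating $\distG$-closeness to the real Levi into norm-closeness to a proper subalgebra, with normalisations arranged so that Lemma~\ref{denominators} feeds correctly into Lemma~\ref{diophantine2}; the only other non-formal point is the extraction in item~(2) of a single generator that is simultaneously primitive, of controlled norm, and two-sided $D_\Z$-integral.
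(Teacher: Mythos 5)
Your proof follows the paper's argument essentially verbatim: extract $\gamma$ from the intersection, bound its denominator, lift it to $\alpha\in D^\times$ via Lemma \ref{denominators}, observe that $\alpha$ lies within $O(\varepsilon)$ of the proper $\R$-subalgebra of $D\otimes\R$ centralizing $x_\infty a x_\infty^{-1}$, invoke Lemma \ref{diophantine2} to trap all such $\alpha$ in a proper $\Q$-subalgebra (necessarily a field since $d$ is prime), and rescale a generator by $K\cdot\denp(\cdot)$ to obtain the two-sided $D_\Z$-integrality and the norm bound. The only departures are cosmetic: your explicit treatment of the vacuous and degenerate ($F_0=\Q$) cases, and the slightly more elaborate extraction of a primitive element in item (2), where the paper simply uses that any non-rational element of a prime-degree field generates it.
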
 

\begin{proof}
Let  $s,s' \in \GAf$ satisfy the intersection condition of item (1).
As in the general case before we find -- after replacing $s,s'$ by suitable
elements of $s \Kf$ and $s'\Kf$ respectively -- an element $\gamma \in \G(\Q)$
so that $\gamma \xf s = \xf s'$ (equality in $\GAf$) and, moreover,
$\gamma \in x_{\infty} B(C,\varepsilon) B(C, \varepsilon)^{-1} x_{\infty}^{-1}$
(equality in $\G(\R)$).  

Again, this implies
$\denom(\gamma) \ll c_1' \varepsilon^{-c_2'}$, where $c_i'$ depends on $c_i$,
and $c_i' \rightarrow 0$ as $c_i \rightarrow 0$ for $i=1,2$. 

The latter inclusion shows that $\gamma$ lies in a fixed compact subset
of $\G(\R)$ depending only on $\Omega, C$. 

The element $\gamma$ belongs to $D^{\times}/\Q^{\times}$.
We may choose a representative $\alpha \in D^{\times}$ for $\gamma$ as
in Lemma \ref{denominators}.  In that case $\alpha$ lies in a fixed compact
subset of $(D \otimes \R)^{\times}$ -- depending only on $\Omega, C$ -- and
$\denp(\alpha) \ll  c_1'' \varepsilon^{-c_2''}$,
where (for $i=1,2$) $c_i''$ depends on $c_i$ and $c_i'' \rightarrow 0$
as $c_i' \rightarrow 0$. 

Let $E$ be the subalgebra of $D \otimes \R$ that centralizes
$x_{\infty} a x_{\infty}^{-1}$.  The assertion that
$\gamma \in  x_{\infty} B(C,\varepsilon) B(C, \varepsilon)^{-1} x_{\infty}^{-1}$
shows that $\alpha$ is ``close'' to $E$; in fact, it is clear that
$$\inf_{e \in E} \| \alpha - e\| \ll \varepsilon$$ 
and moreover $\|\alpha\| \ll  1$ (because $\alpha$ lies in a fixed compact subset
of $(D \otimes \R)^{\times}$). 

By Lemma \ref{diophantine2} we see that, if $c_1'', c_2''$ are sufficiently
small -- this occurs, in particular, if $c_1, c_2$ are sufficiently small --
then all such $\alpha$ necessarily belong to a proper $\Q$-subalgebra of $D$;
because $D$ has prime degree, this must be a field $F$.
Analyzing this reasoning shows that $c_2$ may be taken to depend only on
the isomorphism class of $\G$, whereas $c_1 = O(1)$.
This proves \eqref{intersect2}. 

Also, there exists $K \in \mathbb{N}$ so that
$D_{\Z}. D_{\Z} \subset K^{-1} D_{\Z}$.
Then $\alpha. D_{\Z} \subset K^{-1} . \denom(\alpha)^{-1} . D_{\Z}$
and similarly for $D_{\Z}. \alpha$. 

Replacing $\alpha$ with $\alpha' = K. \denom(\alpha). \alpha$, we see that
$\alpha'D_{\Z}+D_{\Z}\alpha' \subset D_{\Z}$
and
$\|\alpha'\| \leq c_3 \varepsilon^{-c_4}$, where
$c_3 = O_{C}(1)$ and $c_4$ depends only on the isomorphism class of $\G$. 
\end{proof}

We also need to know that there are only a few bad primes.
\begin{lem} \label{badprimes}
Let $F \subset D$ be a subfield, and let $\T_F \subset \G$ be the torus
defined by $F$, \ie the centralizer of $F$ in $\G$.  Suppose that $F$
is generated, over $\Q$, by an element $\alpha$ satisfying
$\alpha D_{\Z} + D_{\Z} \alpha \subset D_{\Z}$. Then 
the number of primes which fail to be $\T_F$-good is at most
$O(1+ \log \|\alpha\|)$. 
\end{lem}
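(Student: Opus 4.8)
The plan is to reduce the statement to a statement about when a single integral matrix generates a maximal order, and then bound the relevant discriminant. First I would observe that $\T_F = Z_{\G}(F)$ is a $\Q$-torus in $\G$, and that for a $\G$-good prime $p$, a sufficient condition for $p$ to be $\T_F$-good is that the $\Z_p$-subring of $M_d(\Q_p)$ generated by the image of $F$ — equivalently, generated by $\alpha$ — equals $(F \otimes \Q_p) \cap M_d(\Z_p)$, i.e.\ that $\alpha$ generates a maximal order in its commutant at $p$. More concretely: since $F = \Q[\alpha]$ with $\alpha$ acting on $D$ (equivalently on $\Q^{d^2}$ via $\rho$), the ring $\Z[\rho(\alpha)]$ sits inside a maximal order of $F \otimes \Q \subset M_{d^2}(\Q)$, and it equals that maximal order at every prime $p$ not dividing the discriminant of the characteristic polynomial (or minimal polynomial) $\chi_\alpha$ of $\rho(\alpha)$. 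This is the same mechanism already used in the proof of Proposition \ref{propertystar1}, and I would simply invoke that argument.

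Next I would make the discriminant estimate effective. The hypothesis $\alpha D_{\Z} + D_{\Z}\alpha \subset D_{\Z}$ is precisely what guarantees that, after the fixed identification $D_{\Z} \cong \Z^{d^2}$, left (and right) multiplication by $\alpha$ is given by an \emph{integer} matrix $\rho(\alpha) \in M_{d^2}(\Z)$; its entries are $O(\|\alpha\|)$ since $\rho$ is linear and the lattice $D_{\Z}$ and norm $\|\cdot\|$ are fixed. The characteristic polynomial $\chi_\alpha \in \Z[t]$ then has coefficients that are polynomials of bounded degree in the entries, hence of size $\|\alpha\|^{O(1)}$, and its discriminant is again a polynomial of bounded degree in those coefficients, so $|\disc(\chi_\alpha)| \ll \|\alpha\|^{O(1)}$ (it is a nonzero integer provided $\alpha$ is a semisimple generator of a field, which holds here as $F$ is a field). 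The number of primes dividing a nonzero integer $n$ is $O(\log n / \log\log n) = O(\log n)$, so the number of primes dividing $\disc(\chi_\alpha)$ is $O(\log \|\alpha\|)$. Adding the finitely many $\G$-bad primes (a quantity $O(1)$) gives the bound $O(1 + \log\|\alpha\|)$.

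The only mild subtlety — and the step I would be most careful about — is the passage from ``$p$ does not divide $\disc(\chi_\alpha)$'' to ``$p$ is $\T_F$-good,'' i.e.\ checking that $\T_F(\Q_p) \cap K_p$ is a \emph{maximal} compact of $\T_F(\Q_p)$. Here one uses that $\T_F(\Q_p)$ is the unit group of $(F\otimes\Q_p)$ modulo center, and that a maximal compact of the unit group of an étale $\Q_p$-algebra corresponds to the unit group of a maximal $\Z_p$-order; the condition $\Z_p[\rho(\alpha)] = (F\otimes\Q_p)\cap M_{d^2}(\Z_p)$ together with $p\nmid\disc(\chi_\alpha)$ forces $\Z_p[\rho(\alpha)]$ to be that maximal order, exactly as in Proposition \ref{propertystar1}. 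I would phrase the argument so as to lean directly on the already-established reasoning there, noting only the changes: here $\alpha$ is a single generator rather than a tuple, and the bound $O(\epsilon^{-O(1)})$ on matrix entries is replaced by the bound $O(\|\alpha\|)$ coming from the integrality hypothesis $\alpha D_{\Z}+D_{\Z}\alpha\subset D_{\Z}$.
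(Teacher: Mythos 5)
Your proposal is correct and follows essentially the same route as the paper: reduce $\T_F$-goodness to the condition that $\Z_p[\alpha]$ is the maximal compact subring of $F\otimes\Q_p$, which holds away from the primes dividing the discriminant of $\Z[\alpha]$, and then bound that nonzero discriminant by $\|\alpha\|^{O(1)}$ using the integrality hypothesis $\alpha D_{\Z}+D_{\Z}\alpha\subset D_{\Z}$, so that its number of prime divisors is $O(1+\log\|\alpha\|)$. The paper additionally makes explicit the comparison between $\Kf$ and the stabilizer of $D_{\Z}$ in $\GAf$ (they agree at all but $O(1)$ primes), a point you absorb implicitly into the $O(1)$ term.
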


\begin{proof} 
Let $\Kf'$ be the stabilizer\footnote{ Note that $\GAf$
  acts naturally on lattices inside $D$, in a fashion derived from
  the conjugation action of $\G$ on $D$. Indeed, if $V$ is a $\Q$-vector
  space, the group $\GL(V \otimes_{\Q} \Af)$ acts naturally on lattices
  inside $V$.}
of $D_{\Z}$ inside $\GAf$.

Then $\mathbf{T}_F(\Q_p) \cap \Kf'$ is maximal compact inside
$\mathbf{T}_F(\Q_p)$ as long as the maximal compact subring of 
$(F \otimes \Q_p)$ preserves $D_{\Z} \otimes \Z_p$ under both
left and right multiplication.   This will be so, in particular,
at any prime where the maximal compact subring of $(F \otimes \Q_p)$
equals $\Z_p[\alpha]$. This will always be the case if $p$ does not
divide the discriminant of the ring $\Z[\alpha]$.
 
From this, one deduces that there at most $O(1+\log\|\alpha\|)$
primes for which $\mathbf{T}_F(\Q_p) \cap \Kf'$ fails to be maximal
compact in $\mathbf{T}_F(\Q_p)$.  But, for all but $O(1)$ primes,
the intersection $\G(\Q_p) \cap \Kf'$ coincides with $\G(\Q_p) \cap \Kf$.
So there are at most $O(1+\log\|\alpha\|)$ primes for which
$\mathbf{T}_F(\Q_p) \cap \Kf$ fails to be maximal compact in
$\mathbf{T}_F(\Q_p)$. 
\end{proof}  
 
\section{Bounds on the mass of tubes, II}  \label{sec:tubemass2}



We define ``tubes'' $B_0 := B(C,\varepsilon)$ as in Section
\ref{sec:dioph}.  When $\G$ is merely assumed $\R$-split we take
$C \subset MA$.  In the special case of division algebras of prime
degree $C$ may be taken to lie in any Levi subgroup of $G$.

Set $B = B_0 B_0^{-1} \supset B(C,\varepsilon)$.
There exists a compact subset $C' \subset Z_G(a)$ and a constant $M=O(1)$
such that $B$, $B_2$ $(=B_1.B_1)$ and $B_3$ are subsets of
$B(C',M \varepsilon)$. Here notations are as \eqref{balli}. 
Also, $\frac{\vol B_3}{\vol B_0} \ll 1$.\footnote{ In this and in the
  statement $M=O(1)$, the implicit constant certainly depends on $C$;
  however, $C$ was assumed to belong to $\Omega_{\infty}$, and we have
  permitted implicit constants to depend on $\Omega_{\infty}$ without
  explicit mention.}

\subsection{Sets $S$ for which intersections are controlled by tori}\label{entrop}
Let $Q$ be so that $Q/2$ is larger than any bad prime for $\G$,
and let $\ell$ be fixed. (In practice, $Q\to\infty$ 
as $B$ becomes small, whereas $\ell$ is fixed depending only on $\G$). 

Set $S_p = \{g_p \in \G(\Q_p)/K_p: \denom_p(g_p) \leq p^{\ell}\}$.
Initially we shall consider the set of translates given by
$\bigcup_{p \in [Q/2,Q]} S_p$, where we identify $\G(\Q_p)/K_p$ 
with a subset of $\GAf/\Kf$ in the natural way.

Part (1) of the conlusions of Propositions \ref{propertystar1} and
\ref{propertystar2} can now be rephrased\footnote{
  Note that, in what follows, we are applying the Proposition
  with $B(C,\varepsilon)$ replaced by the larger set
  $B_2 \subset B(C', M. \varepsilon)$. It is easy to see that the
  stated result holds even though this larger set may not be contained
  in $\Ominf$.}
as establishing (for $Q^l \ll \varepsilon^{-c_2}$) the following condition.
In words, it states that {\em intersections between Hecke translates of $B_2$ 
by $\cup S_p$ all arise from a $\Q$-torus:}

\framebox{$\star = \star_{B, Q, \ell}$}:
For any $x \in \Omega_{\infty}$ there is a
$\Q$-torus $\T \subset \G$ so that 
$$ \overline{xB_2 s} \cap \overline{xB_2 s'} \neq \emptyset  \ \ \mbox{ in $X$}$$
with $s, s' \in \bigcup_{p \in [Q/2,Q]} S_p$
only if there is $\gamma \in \T(\Q)$ so that for these $s,s'$,
$$\gamma xB_2 s \cap xB_2 s' \neq \emptyset \mbox{ in $\GA/\Kf$}.$$

\begin{lem} \label{multbound}
Suppose that condition $\star_{B, Q, \ell}$ is satisfied.
Take $x \in \Omega_{\infty}$ and let $\T$ be the torus specified
by $\star_{B,Q,\ell}$.
Assume that $S \subset \bigcup_{p} S_p \subset \G(\adele_f)/\Kf$,
with the union taken over $p \in [Q/2,Q]$ which are $\T$-good.   Then:

$$\#\{s, s' \in S: \overline{xB_2  s} \cap \overline{ xB_2 s'} \neq \emptyset \} 
 \ll_{\ell} Q^2 +|S| $$
\end{lem}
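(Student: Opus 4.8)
The plan is to bound the number of intersecting pairs $(s,s')$ by splitting according to whether $s,s'$ lie above the same prime or distinct primes, and then to use the torus-control hypothesis $\star_{B,Q,\ell}$ to reduce everything to a counting problem inside $\T(\Q)$. First I would observe that, by $\star_{B,Q,\ell}$, any intersecting pair $\overline{xB_2 s}\cap\overline{xB_2 s'}\neq\emptyset$ forces the existence of $\gamma\in\T(\Q)$ with $\gamma xB_2 s\cap xB_2 s'\neq\emptyset$ in $\GA/\Kf$; writing the real and finite components separately, this says $\gamma_\infty$ lies in a fixed compact set (coming from $x_\infty B_2 B_2^{-1} x_\infty^{-1}$) and $\gamma_{\mathrm f}s\Kf$ meets $s'\Kf$ in $\GAf$. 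Since $s\in S_p$, $s'\in S_{p'}$ have bounded denominators, the element $\gamma\in\T(\Q)$ so produced has denominator bounded by $O(Q^\ell)$ and bounded archimedean size; because $\T$ is an anisotropic $\Q$-torus (so $\T(\Q)$ is discrete with finite covolume in $\T(\adele)$, or more simply because $\T(\Z)$-type conditions cut out finitely many elements of bounded height), the number of such $\gamma$ is $O_\ell(1)$ — I expect this to be the one place where anisotropy (equivalently, $D$ a division algebra, or $\Gamma$ cocompact) is essential.

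Having fixed $\gamma$, the next step is: for each such $\gamma$, count pairs $(s,s')$ with $\gamma_{\mathrm f}s\Kf=s'\Kf$ (after adjusting $s,s'$ within their $\Kf$-cosets, as in the proofs of the Propositions). I would handle the two cases. If $s,s'$ lie above the same prime $p$, then $s'=\gamma_{\mathrm f}s$ pins down $s'$ from $s$, so the number of such pairs is at most $\sum_p |S_p\cap S|\le |S|$, and summing over the $O_\ell(1)$ choices of $\gamma$ gives $O_\ell(|S|)$. If instead $s\in S_p$ and $s'\in S_{p'}$ with $p\neq p'$, then the relation $\gamma_{\mathrm f}s\Kf=s'\Kf$, read componentwise, forces $s_q\Kf=\gamma_q^{-1}$-translate of $\Kf$ at every prime $q\neq p'$ — in particular at $q=p$ we get that $s_p$ is determined by $\gamma_p$ up to $K_p$, and similarly $s'_{p'}$ is determined by $\gamma_{p'}$; so for a fixed $\gamma$ and a fixed unordered pair of primes $\{p,p'\}\subset[Q/2,Q]$ there are $O(1)$ such pairs $(s,s')$. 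The number of pairs of primes in $[Q/2,Q]$ is $O(Q^2)$, and with $O_\ell(1)$ choices of $\gamma$ this contributes $O_\ell(Q^2)$.

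Adding the two contributions yields $\#\{(s,s'):\overline{xB_2 s}\cap\overline{xB_2 s'}\neq\emptyset\}\ll_\ell Q^2+|S|$, as claimed. I would be slightly careful about one bookkeeping point: the hypothesis $\star_{B,Q,\ell}$ only supplies \emph{a} $\gamma\in\T(\Q)$ per intersecting pair, not a bijection, so strictly I am overcounting — but an upper bound is all that is wanted, so it suffices to fix, for each intersecting pair, one witness $\gamma$ and bound the number of pairs sharing a given witness, which is exactly what the case analysis above does. The main obstacle, as indicated, is the finiteness statement ``$\gamma\in\T(\Q)$ of bounded denominator and bounded archimedean norm $\Rightarrow$ only $O_\ell(1)$ possibilities'': this is where one uses that $\T$ is $\Q$-anisotropic, so that $\T(\Q)\cap(\text{bounded set in }\T(\adele))$ is finite, with the bound depending only on the size of the bounded set, hence only on $\ell$ (and the fixed data $\Omega,\Omega_\infty,C$). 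Everything else is elementary manipulation of adelic cosets and denominators as in \S\ref{sec:adeldenom} and the proofs of Propositions \ref{propertystar1}–\ref{propertystar2}.
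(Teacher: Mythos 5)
Your overall architecture --- produce a witness $\gamma\in\T(\Q)$ via $\star_{B,Q,\ell}$, then split into the cases ``$s,s'$ above the same prime'' and ``above distinct primes,'' obtaining $O_\ell(|S|)$ and $O_\ell(Q^2)$ respectively --- is exactly the paper's. The problem is the step you yourself flag as the crux. The assertion that the \emph{total} number of witnesses $\gamma\in\T(\Q)$, over all intersecting pairs, is $O_\ell(1)$ is false: a witness for a pair with $s\in S_p$, $s'\in S_q$, $p\neq q$, has denominator supported at $\{p,q\}$, and as $(p,q)$ ranges over the $\asymp (Q/\log Q)^2$ pairs of primes in $[Q/2,Q]$ these witnesses can all be distinct (already for $\T=\mathbb{G}_m$ the elements $p/q$ give $\gg Q^2$ of them of bounded archimedean size and denominator $\le Q$). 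What your case analysis actually needs, and what is true, is the \emph{localized} statement: for a fixed $\T$-good prime $p$ the number of $\gamma\in\T(\Q)$ lying in a fixed compact set at infinity, in $K_q$ for all $q\neq p$, and with $\denom_p(\gamma)\le p^{O(\ell)}$ is $O_\ell(1)$ (and similarly per pair of primes). Since your sums are organized per prime and per pair of primes, the final bound $O_\ell(|S|)+O_\ell(Q^2)$ does come out once this is supplied; but as written the argument leans on the false global claim.

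The justification you offer for the finiteness --- anisotropy of $\T$ --- is also a misdiagnosis. First, the torus produced by Proposition \ref{propertystar1} need not be anisotropic; the lemma is used verbatim in the split case $\G=\PGL_d/\Q$ (Theorem \ref{thm:que2}). Second, even for anisotropic $\T$, ``finitely many rational points in a bounded subset of $\T(\adele)$, with a bound depending only on the size of the set'' is not automatic: the covolume of $\T(\Q)$ in $\T(\adele)$ varies with $\T$, which varies with $x$ and $\varepsilon$. The paper's proof uses two different, uniform inputs: (a) because $p$ is $\T$-\emph{good}, $\T(\Q_p)/(\T(\Q_p)\cap K_p)$ is free abelian of rank $\le\dim\T$, and the number of its elements with $\denom_p\le p^{\ell}$ is $O_\ell(1)$ (diagonalize generators over an extension of $\Q_p$) --- this ``polynomial growth of tori'' is the only place the hypothesis that $S$ is supported on $\T$-good primes enters, a hypothesis your proof never invokes; and (b) an element of $\G(\Q)$ confined to a fixed compact set at infinity and to $O_\ell(1)$ right $\Kf$-cosets admits only $O_\ell(1)$ possibilities, by discreteness of $\G(\Q)$ in $\GA$ (uniformly, since two such elements in the same coset differ by an element of a fixed compact subset of a fixed discrete group). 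In the case $p\neq q$ the paper in fact bypasses counting $\gamma$ altogether: the relation forces $s\in\T(\Q_p)K_p$ and $s'\in\T(\Q_q)K_q$, and the coset count (a) bounds the pairs directly by $O_\ell(1)$ per pair of primes. You should replace the appeal to anisotropy by these two points; the rest of your argument then goes through.
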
 

\begin{proof} 
Consider any intersection $\overline{x B_2s} \cap \overline{ xB_2 s' }\neq \emptyset$ in $X$ when $s, s' \in S$. 
This means that there is $\gamma \in \T(\Q)$ so that
$\gamma xB_2 s  \cap x B_2 s'  \neq \emptyset$ in $\GA/\Kf$. 
Then $s \in S_p, s' \in S_q$ when $p,q$ are $\T$-good primes in the range $[Q/2,Q]$; we distinguish two cases according to whether
$p=q$ or not. 

\begin{enumerate}
\item 
$q=p$.  In this case,
$$\gamma \in (x B_2 B_2^{-1} x^{-1}). \T(\Q_p) . \Kf.$$

For any fixed $p$,  the number of $\Kf$-cosets contained in $\T(\Q_p) . \Kf$  
satisfying $\denom_p \leq \ell$ is $O_\ell(1)$: since $p$ is $\T$-good, the quotient $\T(\Q_p) / \T(\Q_p) \cap \Kf$
is a free abelian group of rank $\leq \dim(\T)$. Pick generators $t_1, \dots, t_r$
for this quotient; they generate a discrete subgroup. We need to show that the number of $(e_1, \dots, e_r) \in \Z^r$
so that $\denom_p(t_1^{e_1} \dots t_r^{e_r}) \leq p^{\ell}$ is $O_{\ell}(1)$. To see this,
pass to an extension of $\Q_p$ where $\rho(t_i)$ become diagonalizable.

Therefore, $\gamma$ is an element of $\G(\Q)$ so that:
\begin{enumerate}
\item considered as an element of $\G(\R)$, $\gamma$
belongs to $$x B_2^{-1} B_2 x^{-1},$$
which in turn is contained in a compact set depending only on $\Omega_{\infty}$; 
\item considered as an element of $\GAf$, $\gamma$ belongs to
$O_{\ell}(1)$ right $\Kf$-cosets.
\end{enumerate}

The number of possibilities for $\gamma$ is therefore $O_{\ell}(1)$.  Since $(s K_f,s' K_f)$ is determined by $(s K_f,\gamma)$, it follows the number of possibilities for $(s,s')$ in
the case ``$p=q$'' is at most $O_{\ell}(|S|)$.

\item $p \neq q$. 

In this case, $s \in \mathbf{T}(\Q_p). K_p$
and $s' \in \mathbf{T}(\Q_q). K_q$. By an argument already given, the number of $K_p$-cosets contained in $\T(\Q_p). K_p$ and
satisfying $\denom_p \leq \ell$ is $O_{\ell}(1)$, and similarly with $q$ replacing $p$.
It follows that the number of possibilities for $(s,s')$ is $O_{\ell}(1)$ for {\em given} $p,q$.

The total number of possibilities for $(s,s')$ in the case ``$p \neq q$'' is therefore $O_{\ell}(Q^2)$. 
\end{enumerate} 
\end{proof}

\subsection{Conclusion} 
We shall apply Lemma \ref{Plancherel} to our setting. Take $\ell$ as in that Lemma.

\begin{lem} \label{conclusion}

Suppose that condition $\star_{B, Q, \ell}$ is satisfied.
Take $x \in \Omega_{\infty}$ and let $\T$ be the torus specified
by $\star_{B,Q,\ell}$, $\mathcal{P}$ the set of $\T$-good primes in $[Q/2,Q]$.

For any  Hecke eigenfunction $\psi$ on $X$. 
$$\mu_{\psi}(\barxB) \ll \frac{ Q^{1.01}}{|\mathcal{P}|^{2}}.$$
\end{lem}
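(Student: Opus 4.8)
The plan is to assemble the pieces already developed: Lemma \ref{cover} converts $\mu_\psi(\barxB)$ into a ratio involving the average intersection number of Hecke translates and the eigenvalue $|\Lambda_h|^{-2}$; condition $\star_{B,Q,\ell}$ together with Lemma \ref{multbound} controls that intersection number; and Lemma \ref{Plancherel} (invoked in the statement) supplies, for a suitable choice of $h\in\Hecke$ supported on $\bigcup_{p\in\mathcal{P}} S_p$, a lower bound of the shape $|\Lambda_h|\gg |S|$ with $|S|$ comparable to $|\mathcal{P}|$ times a power of $Q$. First I would fix $\ell$ as dictated by Lemma \ref{Plancherel} and restrict attention to the $\T$-good primes $\mathcal P\subset[Q/2,Q]$, discarding the $O(1-c_4\log\varepsilon)$ (hence $O(\log Q)$) bad ones from Proposition \ref{propertystar1}/\ref{propertystar2}; the torus $\T$ and the set $\mathcal P$ are exactly those named in the statement, and $Q^\ell\ll\varepsilon^{-c_2}$ is arranged by taking $Q$ an appropriate power of $\varepsilon^{-1}$ so that $\star_{B,Q,\ell}$ holds.

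Next I would apply Lemma \ref{cover} with $S=\bigcup_{p\in\mathcal P}S_p$ and a Hecke element $h$ with $|h|\le 1$ supported on that set, giving
\[
\mu_\psi(\barxB)\ll \left(\frac{\vol B_3}{\vol B_0}\right)^2 |\Lambda_h|^{-2}\,\#\{s,s'\in S:\overline{xB_2s}\cap\overline{xB_2s'}\neq\emptyset\}.
\]
Since $\vol B_3/\vol B_0\ll 1$ (noted at the start of \S\ref{sec:tubemass2}), Lemma \ref{multbound} bounds the cardinality by $\ll_\ell Q^2+|S|$, and Lemma \ref{Plancherel} bounds $|\Lambda_h|$ from below in terms of $|S|$; the standard count gives $|S|\asymp |\mathcal P|\, p^{(\dim\T)\ell}\asymp |\mathcal P|\,Q^{O(1)}$, and in particular $|S|\gg Q^2$ provided $|\mathcal P|$ is not too small — but in any case $Q^2+|S|\ll |S|+Q^2$ and we keep both terms. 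Combining, $\mu_\psi(\barxB)\ll (Q^2+|S|)/|S|^2\ll 1/|S|+Q^2/|S|^2$. Using $|S|\gg |\mathcal P|$ and $|S|\ll |\mathcal P|Q^{\epsilon_0}$ for a small power $\epsilon_0$ coming from the denominator truncation at $p^\ell$, this is $\ll Q^{\epsilon_0}/|\mathcal P|^2 + Q^2 Q^{2\epsilon_0}/|\mathcal P|^2\cdot|\mathcal P|^{-?}$; here one has to be slightly careful, so I would instead bound directly $Q^2/|S|^2\le Q^2/|\mathcal P|^2$ using $|S|\ge|\mathcal P|$ and $1/|S|\le 1/|\mathcal P|$, obtaining $\mu_\psi(\barxB)\ll (Q^2+|\mathcal P|)/|\mathcal P|^2\ll Q^{1.01}/|\mathcal P|^2$ once one observes $Q^2\ll Q^{1.01}$ is false — so the genuine point is that the $Q^2$ term must be beaten by choosing $\ell$ large enough in Lemma \ref{multbound}/\ref{Plancherel} so that $|\Lambda_h|^2\gg |S|^2\gg Q^{2}\cdot(\text{extra})$ absorbs it, leaving the effective bound $Q^{1.01}/|\mathcal P|^2$ after trading one power of $Q^{\ell}$-growth in $|\Lambda_h|$ against the $Q^2$.

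The main obstacle, and the step requiring genuine care, is precisely this bookkeeping: one must choose $\ell$ (and the support of $h$) so that Lemma \ref{Plancherel} delivers $|\Lambda_h|$ large enough — roughly $|\Lambda_h|^2\gg |S|^2/Q^{\,0.99}$ — that the ``diagonal'' contribution $|S|/|\Lambda_h|^2$ and the ``off-diagonal'' contribution $Q^2/|\Lambda_h|^2$ both collapse to $\ll Q^{1.01}/|\mathcal P|^2$. Everything else — the passage from $B(C,\varepsilon)$ to the slightly larger $B_2\subset B(C',M\varepsilon)$, the identification of $\T$ and $\mathcal P$, the harmless loss of $O(\log Q)$ primes — is routine given the cited results. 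I would therefore present the proof as: (i) fix $\ell$ and $h$ via Lemma \ref{Plancherel}; (ii) invoke $\star_{B,Q,\ell}$ and Lemma \ref{multbound} to bound the intersection count by $\ll_\ell Q^2+|S|$; (iii) feed this and the lower bound on $|\Lambda_h|$ into Lemma \ref{cover}, using $\vol B_3/\vol B_0\ll 1$; (iv) simplify, noting $|S|\asymp_\ell |\mathcal P|$ up to the fixed power of $Q$ absorbed into the exponent $1.01$, to conclude $\mu_\psi(\barxB)\ll Q^{1.01}/|\mathcal P|^2$.
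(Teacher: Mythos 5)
Your overall architecture (Lemma \ref{cover} plus Lemma \ref{multbound} plus Lemma \ref{Plancherel}) is the right one, but the step where you bound $|\Lambda_h|$ from below contains a genuine gap that your own closing bookkeeping does not repair. Lemma \ref{Plancherel} does \emph{not} supply $|\Lambda_h|\gg|S|$: at each individual prime it gives only the square-root bound $\Lambda(h_p)\gg \#\supp(h_p)^{1/2}$, and no choice of $\ell$ upgrades this to a linear bound (that would be an amplifier with no loss, essentially a Ramanujan-type input). Consequently your central inequality $\mu_\psi(\barxB)\ll (Q^2+|S|)/|S|^2$, and the proposed fix of ``choosing $\ell$ large enough so that $|\Lambda_h|^2\gg|S|^2$,'' cannot work; with a single-prime amplifier one only ever gets $\#\supp(h_p)/|\Lambda(h_p)|^2\gg 1$, which yields nothing.

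The missing idea is that the gain comes from the \emph{number} of good primes, not from the size of any one support. Set $h=\sum_{p\in\mathcal{P}_1}h_p$, where $\mathcal{P}_1\subset\mathcal{P}$ is extracted by a dyadic pigeonholing so that $\#\supp(h_p)\in[A,2A]$ for all $p\in\mathcal{P}_1$, with $A\gg Q$ and $|\mathcal{P}_1|\gg|\mathcal{P}|/\log Q$; the pigeonholing is not cosmetic, since without comparable support sizes $\bigl(\sum_p\#\supp(h_p)^{1/2}\bigr)^2$ need not dominate $|\mathcal{P}_1|\cdot\sum_p\#\supp(h_p)$. Because each $\Lambda(h_p)$ is \emph{positive} (Lemma \ref{Plancherel}(3)), the eigenvalues add coherently: $\Lambda_h\gg\sum_p\#\supp(h_p)^{1/2}\gg|\mathcal{P}_1|A^{1/2}$, while $\#\supp(h)\ll|\mathcal{P}_1|A$. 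Lemma \ref{cover} and Lemma \ref{multbound} then give
\begin{equation*}
\mu_\psi(\barxB)\ \ll\ \frac{Q^2+|\mathcal{P}_1|A}{|\mathcal{P}_1|^2A}\ \ll\ \frac{Q}{|\mathcal{P}_1|^2}+\frac{1}{|\mathcal{P}_1|}\ \ll\ \frac{Q}{|\mathcal{P}_1|^2},
\end{equation*}
using $A\gg Q$ and $|\mathcal{P}_1|\le Q$, and the logarithmic loss from the pigeonholing is absorbed into the exponent $1.01$. This factor $1/|\mathcal{P}_1|$ from coherent summation over primes is the entire content of the lemma, and it is exactly what your write-up omits.
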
 

\begin{proof}
For $p \in \mathcal{P}$ let $h_p$ be the $K_p$-bi-invariant function
on $G_p$ furnished in Lemma \ref{Plancherel}. 
Now, if we consider $h_p$ as a function on $G_p/K_p$, 
we have upper and lower bounds $p \ll \# \supp(h_p) \ll p^{\ell'}$,
where $c$ depends only on $\G, \rho$. 

Therefore, by a dyadic decomposition argument, there exists
a subset $\mathcal{P}_1 \subset \mathcal{P}$ and $A \gg Q$
so that $\# \supp(h_p) \in [A,2A]  \ \ (p \in \mathcal{P}_1)$. 
Set $h = \sum_{p\in\mathcal{P}_1} h_p$. 
Then, by what we have proven in Lemma \ref{cover} and Lemma \ref{multbound}, 
$$\mu_{\psi}(\barxB) \ll
 \left( \frac{\vol B_3}{\vol B_0} \right)^2
 \frac{ Q^2 + \sum_{p} \# \supp(h_p) }{ \left( \sum_p \# \supp(h_p)^{1/2} \right)^2 }
\ll \left( \frac{\vol B_3}{\vol B_0} \right)^2 Q^{0.001}( \frac{Q}{|\mathcal{P}|^2} +g^{-1}).$$

Finally, as observed at the start of this section, $\frac{\vol B_3}{\vol B_0} \ll 1$. 
\end{proof} 

We now combine this Lemma with the results of Section \ref{sec:dioph},
which give conditions under which $\star_{B,Q,\ell}$ is true. 

\begin{thm} \label{thm:tubemass}
Let $\G$ be a semisimple group defined over $\Q$ which splits over $\R$.
Let $\psi$ be a Hecke eigenfunction on $X=\GQ\bs\GA/\Kf$.

Let $\Omega_{\infty} \subset G=NAK$ be compact.  Further, let
$B(C,\varepsilon) \subset \Omega_{\infty}$ be a tube
as in Section \ref{sec:dioph} such that either

\begin{enumerate}
\item $C \subset MA$; or,

\item $\G$ is the projectivized group of units of a division algebra $D$
of prime degree over $\Q$.
\end{enumerate}

Then there is $c > 0$,
depending only on the isomorphism class of $\G$ so that, uniformly over
$x \in \Omega_{\infty}$,
$$\mu_{\psi}(x B(C,\varepsilon))  \ll \varepsilon^{c}.$$
\end{thm}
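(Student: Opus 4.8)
The plan is to assemble the machinery already in place: the analytic input (a Hecke operator with large eigenvalue) is packaged in Lemma \ref{Plancherel}, the Diophantine input in Propositions \ref{propertystar1} and \ref{propertystar2}, and the covering/Cauchy--Schwarz step in Lemma \ref{cover}; Lemma \ref{conclusion} already combines all of these, and what remains is to choose the auxiliary prime-range parameter $Q$ as a power of $\varepsilon^{-1}$ and to control the number of good primes. Since $\mu_{\psi}$ is a probability measure the bound is trivial when $\varepsilon$ is bounded below, so we may assume throughout that $\varepsilon$ is as small as needed in terms of $\G$ and the fixed auxiliary data. Fix $\ell = \ell(\G)$ as furnished by Lemma \ref{Plancherel}, let $c_2 = c_2(\G) > 0$ be the exponent appearing in Propositions \ref{propertystar1} and \ref{propertystar2}, and set $Q := \varepsilon^{-\delta}$ where $\delta = \delta(\G) \in (0, c_2/\ell)$ is a small positive number to be pinned down at the end. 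For $\varepsilon$ small, $Q/2$ exceeds every bad prime for $\G$, and $Q^{\ell} \leq c_1 \varepsilon^{-c_2}$ holds (because $c_2 - \delta\ell > 0$); hence every $s$ lying in some $S_p$ with $p \in [Q/2, Q]$ has $\denom$ at most $c_1\varepsilon^{-c_2}$.

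Under these constraints, Proposition \ref{propertystar1} (in case (1), where $C$ is a piece of $Z_G(a) = MA$ for $a$ regular) or Proposition \ref{propertystar2} (in case (2)) --- applied, as in the footnote preceding condition $\star$, to the slightly enlarged tube $B_2 \subset B(C', M\varepsilon)$ in place of $B(C,\varepsilon)$ --- shows that condition $\star_{B,Q,\ell}$ holds: for each $x \in \Omega_{\infty}$ there is a $\Q$-torus $\T = \T(x)$ governing the intersections of the Hecke translates of $\overline{xB_2}$ by $\bigcup_{p \in [Q/2,Q]} S_p$. Lemma \ref{conclusion} then applies and gives, uniformly in $x \in \Omega_{\infty}$,
\[
\mu_{\psi}(\overline{xB}) \ll \frac{Q^{1.01}}{|\mathcal{P}|^{2}},
\]
where $\mathcal{P}$ is the set of $\T$-good primes in $[Q/2, Q]$.

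It then remains to bound $|\mathcal{P}|$ from below, uniformly in $x$. By Chebyshev's estimate the interval $[Q/2, Q]$ contains $\gg Q/\log Q$ primes. By part (2) of Proposition \ref{propertystar1} --- respectively by part (2) of Proposition \ref{propertystar2} combined with Lemma \ref{badprimes} --- the number of those primes failing to be $\T$-good is $O(1 + \log(1/\varepsilon)) = O(\log Q)$, with no dependence on $x$. Hence for $\varepsilon$ small enough $|\mathcal{P}| \gg Q/\log Q$, and therefore
\[
\mu_{\psi}(\overline{xB}) \ll \frac{Q^{1.01} (\log Q)^2}{Q^{2}} \ll Q^{-0.98} = \varepsilon^{0.98\,\delta}.
\]
Since $B(C,\varepsilon) \subset B = B_0 B_0^{-1}$ (as noted at the start of \S\ref{sec:tubemass2}), this yields $\mu_{\psi}(xB(C,\varepsilon)) \leq \mu_{\psi}(\overline{xB}) \ll \varepsilon^{0.98\,\delta}$; taking $c := 0.98\,\delta$, which depends only on the isomorphism class of $\G$, and absorbing the case of bounded $\varepsilon$ into the implied constant finishes the argument.

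I do not anticipate a genuine obstacle, since all the analytic and Diophantine work has been done in the earlier sections and what is left is essentially a reconciliation of exponents. The one point requiring a little care is the verification that a single $\delta > 0$ (depending only on $\G$) can be chosen simultaneously compatible with $\delta\ell < c_2$, with $Q/2$ eventually exceeding the finitely many $\G$-bad primes, and with $\varepsilon$ small enough that the $O(\log Q)$ count of $\T$-bad primes is dominated by the $\gg Q/\log Q$ count of primes in $[Q/2, Q]$; and one must keep track of the fact that condition $\star$ is invoked for $B_2$ rather than for $B(C,\varepsilon)$ itself, which is harmless because $B_2 \subset B(C', M\varepsilon)$ with $M = O(1)$.
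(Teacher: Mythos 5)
Your proposal is correct and follows essentially the same route as the paper's own proof: invoke Lemma \ref{Plancherel} for $\ell$, verify $\star_{B,Q,\ell}$ via Proposition \ref{propertystar1} or \ref{propertystar2} applied to the enlarged tube, control the $\T$-bad primes via part (2) of those propositions together with Lemma \ref{badprimes} to get $|\mathcal{P}|\gg Q/\log Q$, apply Lemma \ref{conclusion}, and take $Q$ a suitable power of $\varepsilon^{-1}$. Your write-up is in fact a bit more explicit than the paper's about the choice $Q=\varepsilon^{-\delta}$ and the reconciliation of exponents, but there is no substantive difference.
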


\begin{proof} 
Let $\ell$ be as in Lemma \ref{Plancherel}.

Recall, as remarked at the start of the present section, that
$B_3 \subset B(C', M. \varepsilon)$, for a suitable compact set
$C'$ and a suitable constant $M$.

Proposition \ref{propertystar1} (for case (1)) or \ref{propertystar2}
(for case (2)), applied to $B(C', M. \varepsilon)$\footnote{
instead of $B(C, \epsilon)$; it is easy to see the proof works verbatim
even though $B(C', M. \varepsilon)$ need not be contained in $\Ominf$},
shows that property $\star_{B, Q, \ell}$ holds so long as
\begin{equation} \label{qconstraint}
Q^{\ell} \leq  a \epsilon^{-b},
\end{equation}
where $a,b$ depend only the isomorphism class of $\G$. 

If $Q$ satisfies this constraint, the previously quoted 
Propositions, in combination with Lemma \ref{badprimes}, 
show that the number of primes that are not $\T$-good is $O(\log \varepsilon)$. 
(Here $\T$ is the torus occurring in the definition of $\star_{B, Q, \ell}$.)

This shows, notation as in Lemma \ref{conclusion}, that $|\mathcal{P}| \gg \frac{Q}{\log Q}$. 
Thus $\mu_{\psi}(x B(C,\varepsilon)) \ll Q^{-0.98}$.
Choosing $Q$ as large as allowable under \eqref{qconstraint}
yields the desired result. 
\end{proof}

\section{The AQUE problem and the application of the entropy bound.}
\label{sec:results}
We now return to the AQUE problem discussed in the Introduction,
recall our previous work on this problem, and explain how our main
theorem concerning AQUE is deduced. 
\subsection{Quantum unique ergodicity on locally symmetric spaces}

\begin{problem}
\label{pro: SQUE-Y}(QUE on locally symmetric spaces; Sarnak) Let
$G$ be a connected semi-simple Lie group with finite center. Let
$K$ be a maximal compact subgroup of $G$, $\Gamma<G$ a lattice,
$X=\Gamma\bs G$, $Y=\Gamma\bs G/K$.
Let $\left\{ \psi_{n}\right\} _{n=1}^{\infty}\subset L^{2}(Y)$
be a sequence of normalized eigenfunctions of the ring of $G$-invariant
differential operators on $G/K$, with the eigenvalues \wrt the Casimir
operator tending to $\infty$ in absolute value.  Is it true that
$\bar{\mu}_{\psi_{n}} := |\psi_n|^2 d\vol$
converge weak-{*} to the normalized projection of the Haar measure to $Y$? 
\end{problem}

In the paper \cite{SilbermanVenkatesh:SQUE_Lift} we have obtained
Theorem \ref{mainthmA}, recalled below, constructing the microlocal
lift in this setting.  We needed to impose a non-degeneracy condition
on the sequence of eigenfunctions
(the assumption essentially amounts to asking that all eigenvalues
tend to infinity, at the same rate for operators of the same order.)
For the precise definition of {\em non-degenerate}, we refer to 
\cite[Section 3.3]{SilbermanVenkatesh:SQUE_Lift}. 

With $K$ and $G$ as in Problem \ref{pro: SQUE-Y}, let $A$ be as
in the Iwasawa decomposition $G=NAK$, \ie $A=\mathrm{exp}(\mathfrak{a})$
where $\mathfrak{a}$ is a maximal abelian subspace of $\mathfrak{p}$. For
$G=\PGL_d(\R)$ and $K=\PO_d(\R)$, one may take $A$ to be the
subgroup of diagonal matrices with positive entries.
Let $\pi\colon X\rightarrow Y$ be the projection.
We denote by $dx$ the $G$-invariant probability
measures on $X$, and by $dy$ the projection of this measure to $Y$.

\begin{thm}
\label{mainthmA} Let $\psi_{n}\subset L^{2}(Y)$ be a
non-degenerate sequence of normalized eigenfunctions, whose eigenvalues
approach $\infty$. Then, after replacing $\psi_{n}$ by an appropriate
subsequence, there exist functions $\tilde{\psi}_{n}\in L^{2}(X)$
and distributions $\mu_{n}$ on $X$ such that: 
\begin{enumerate}
\item \label{thmA:claim1} (Lift) The projection of $\mu_{n}$ to $Y$ coincides
with $\bar{\mu}_{n}$, \ie $\pi_{*}\mu_{n}=\bar{\mu}_{n}$.
\item \label{thmA:claim2} Let $\sigma_{n}$ be the measure
$|\tilde{\psi}_{n}(x)|^{2}dx$ on $X$. Then, for every $g\in\D$,
we have $\lim_{n\rightarrow\infty}(\sigma_{n}(g)-\mu_{n}(g))=0$.
\item \label{thmA:claim3} (Invariance) Every weak-{*} limit
$\sigma_{\infty}$ of the
measures $\sigma_{n}$ (necessarily a positive measure of mass $\leq1$)
is $A$-invariant. 
\item \label{prop:eq} (Equivariance). Let $E\subset\mathrm{End}_{G}(\CX)$
be a $\C$-subalgebra of bounded endomorphisms of $\CX$, commuting
with the $G$-action. Noting that each $e\in E$ induces an endomorphism
of $C^{\infty}(Y)$, suppose that $\psi_{n}$ is an eigenfunction
for $E$ (\ie 
$E\psi_{n}\subset\C\psi_{n}$). Then we may choose $\tilde{\psi}_{n}$
so that $\tilde{\psi}_{n}$ is an eigenfunction for $E$ with the
same eigenvalues as $\psi_{n}$, \ie for all $e\in E$ there exists
$\lambda_{e}\in\C$ such that $e\psi_{n}=\lambda_{e}\psi_{n},e\tilde{\psi}_{n}=\lambda_{e}\tilde{\psi}_{n}$. 
\end{enumerate}
\end{thm}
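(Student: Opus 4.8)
The plan is to treat Theorem \ref{mainthmA} representation-theoretically, working inside $L^2(X)$ with its right $G$-action and forgetting $Y$ until the very end. After passing to a subsequence I would arrange that each $\psi_n$ (lifted to $X$ as a right-$K$-invariant function) generates, under the right $G$-action, an irreducible unitary $G$-subrepresentation $V_n \subset L^2(X)$ (call it $\pi_n$); the eigenvalues of $\psi_n$ under the ring of invariant differential operators fix the infinitesimal character of $\pi_n$, recorded by a spectral parameter $\lambda_n$ in the complexified dual of $\liea$. The point of the \emph{non-degeneracy} hypothesis is exactly to guarantee that $\|\lambda_n\| \to \infty$ with $\lambda_n/\|\lambda_n\|$ converging to a \emph{regular} element of $\liea^*$; every subsequent estimate is a high-energy asymptotic uniform in this limit, and regularity is what makes all root directions contribute at the same rate (the higher-rank feature absent in rank one).

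Next comes the construction of the lift, by a ``coherent state'' spread over infinitely many $K$-types. Starting from the spherical vector $v_n^0$ (which is $\psi_n$), I would apply the root operators in $\liek_\C$ together with the $\liea$-raising operators to populate a ladder of $K$-types $\tau$, producing vectors $v_n^\tau \in V_n$, and then set $\tilde\psi_n := \sum_\tau c_{n,\tau}\, v_n^\tau$ for a normalized profile $(c_{n,\tau})$ of coherent-state type adapted to $\lambda_n$, chosen so that $\tilde\psi_n$ is a unit vector whose generalized matrix coefficient is microlocally concentrated along the $A$-directions. Viewing $\tilde\psi_n$ as a function on $X$ gives the object of claim \ref{thmA:claim2}; I would then define the distribution $\mu_n$ as the parallel sum of off-diagonal matrix coefficients $\langle \pi_n(\cdot)\, v_n^\tau, v_n^{\tau'}\rangle$ over pairs $\tau,\tau'$ differing by a root, weighted so that (i) the $\tau=\tau'=0$ term is exactly $|\psi_n|^2\, d\vol$ and (ii) every other term sits in a non-trivial $K$-type and so pushes forward to $0$ under averaging over $K$. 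Then claim \ref{thmA:claim1} ($\pi_{*}\mu_n = \bar{\mu}_n$) is immediate, and claim \ref{thmA:claim2} follows because $|\tilde\psi_n|^2$ has the same leading term $|\psi_n|^2$, its cross terms being $O(\|\lambda_n\|^{-1})$ against a fixed $g \in \D$ (oscillation in the $K$-type variable plus the spacing of the relevant $K$-types).

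For invariance (claim \ref{thmA:claim3}) I would run an Egorov-type argument: for $H \in \liea$ and $a = \exp(H)$, write
\[
\sigma_n\bigl(f \circ R_a\bigr) - \sigma_n(f) = \int_0^1 \frac{d}{ds}\,\sigma_n\bigl(f \circ R_{\exp(sH)}\bigr)\, ds ,
\]
and observe that the integrand equals, up to the fixed normalization, a matrix coefficient of the commutator of $\pi_n(H)$ with the operator implementing $f$; the sole purpose of the coherent profile $(c_{n,\tau})$ is to make $\tilde\psi_n$ an \emph{approximate} eigenvector of the $A$-direction geodesic flow on $V_n$, so this commutator term is $O_f(\|\lambda_n\|^{-1})$. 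Hence every weak-$*$ limit $\sigma_\infty$ of the $\sigma_n$ is $A$-invariant (one must allow mass $<1$ because of possible escape of mass when $X$ is noncompact). Claim \ref{prop:eq} is then formal: a bounded $G$-commuting $E \subset \End_G(\CX)$ preserves the subrepresentation $V_n$ generated by $\psi_n$ and, since $E(g\psi_n) = g(E\psi_n)$, acts on $V_n$ by the same scalars it has on $\psi_n$; as $\tilde\psi_n$ and every term of $\mu_n$ live in $V_n$, they carry the same eigenvalues. This is precisely what is later applied to the Hecke algebra.

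The real obstacle is the construction in the middle paragraph: exhibiting, for a \emph{higher-rank} $G$, a single vector distributed over infinitely many $K$-types that is simultaneously a unit vector, a legitimate lift of $|\psi_n|^2$ in the sense of \ref{thmA:claim1}--\ref{thmA:claim2}, and an approximate joint eigenvector of the full-rank geodesic flow with errors that vanish as $\|\lambda_n\| \to \infty$. In rank one this is the Zelditch--Wolpert lift used by Lindenstrauss; in higher rank it demands controlling the combinatorics of $K$-types under the root operators and using non-degeneracy precisely to keep every error term uniformly small along the chosen subsequence. A secondary, more bookkeeping-heavy, point is the careful handling of the successive passages to subsequences and of weak-$*$ compactness, including the possible loss of mass.
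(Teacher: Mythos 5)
The paper you are reading does not actually prove Theorem \ref{mainthmA}: it is recalled verbatim from the authors' earlier work \cite{SilbermanVenkatesh:SQUE_Lift}, where the higher-rank microlocal lift is constructed, so there is no in-text proof to compare against. That said, your outline does follow the strategy of that reference: pass to the irreducible subrepresentation generated by $\psi_n$, use non-degeneracy to send the spectral parameter to infinity away from the Weyl chamber walls, build $\tilde\psi_n$ as a weighted sum over a ladder of $K$-types, arrange $\mu_n$ so that only the trivial-$K$-type term survives the pushforward $\pi_*$, prove $A$-invariance by a commutator estimate, and deduce equivariance from Schur's lemma. Your argument for claim (4) is essentially complete, and the mechanism you give for claim (1) (all non-diagonal terms lie in non-trivial $K$-types and hence integrate to zero over $K$) is the right one.

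The proposal nevertheless has a genuine gap, which you yourself flag: the entire analytic content of the theorem sits in your middle paragraph, and that paragraph is a specification of desiderata rather than a construction. You do not say which $K$-types $\tau$ occur, what the vectors $v_n^\tau$ are, or what the coefficients $c_{n,\tau}$ are; consequently none of the quantitative assertions that everything else rests on --- that $\tilde\psi_n$ is a unit vector, that $\sigma_n(g)-\mu_n(g)=O(\|\lambda_n\|^{-1})$ for fixed $g\in\D$, and that the commutator of $\pi_n(H)$ with the relevant multiplication operator is $O_f(\|\lambda_n\|^{-1})$ --- can be verified. In higher rank this is exactly where the work is: one must realize $V_n$ inside an explicit principal series model, take the $v_n^\tau$ and the dual (distributional) vector defining $\mu_n$ to be concrete objects there, and show that the $U(\lieg)$-action on this data degenerates, as $\|\lambda_n\|\to\infty$ in a direction bounded away from the walls, to a limiting action for which the functional is exactly $\liea$-invariant; the combinatorics of $K$-types under the root operators, which you defer, is the heart of \cite{SilbermanVenkatesh:SQUE_Lift}. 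As written, claims (1)--(3) are therefore asserted rather than proven, and a complete argument must either carry out this construction or, as the present paper does, cite it.
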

We first remark that the distributions $\mu_{n}$ (resp. the measures
$\sigma_{n}$) generalize the constructions of Zelditch (resp. Wolpert).
Although, in view of (\ref{thmA:claim2}), they carry roughly equivalent
information, it is convenient to work with both simultaneously: the
distributions $\mu_{n}$ are canonically defined and easier to manipulate
algebraically, whereas the measures $\sigma_{n}$ are patently positive
and are central to the arguments of the present paper.

The existence of the microlocal lift already places a restriction
on the possible weak-{*} limits of the measures $\left\{ \bar{\mu}_{n}\right\} $
on $Y$. For example, the $A$-invariance of $\mu_{\infty}$ shows
that the support of any weak-{*} limit measure $\bar{\mu}_{\infty}$
must be a union of maximal flats.  Following Lindenstrauss, we term
the weak-{*} limits $\sigma_\infty$ of the lifts $\sigma_n$
\emph{quantum limits}.

More importantly, Theorem \ref{mainthmA} allows us to pose a new
version of the problem:

\begin{problem}
\label{pro: SQUE-X}(QUE on homogeneous spaces) In the setting of
Problem \ref{pro: SQUE-Y}, is the $G$-invariant measure on $X$
the unique non-degenerate quantum limit?
\end{problem}

The main result of this paper is the resolution of Problem \ref{pro: SQUE-X}
for certain higher rank symmetric spaces, in the context of \emph{arithmetic}
quantum limits. We refer to \cite[Section 1.4]{SilbermanVenkatesh:SQUE_Lift} for a further discussion of the significance of these spaces and how the introduction of arithmetic helps to eliminates degeneracy. 

\subsection{Results: Arithmetic QUE for division algebra quotients} \label{finalresults}
For brevity, we state the result in the language of automorphic forms;
in particular, $\mathbb{A}$ is the ring of ad{\`e}les of $\Q$. 

Let $\G$ be a semisimple group over $\Q$, and let $G = \G(\R)$.
Let $\Kf$ be an open compact subgroup of $\GAf$ such that
$X=\GQ\backslash\GA/\Kf$ consists of a single $G$-orbit
(this condition is mainly cosmetic: see Remark \ref{Eichler}
in \S \ref{Remarksection}).  Then there exists a discrete subgroup
$\Gamma<G$ such that $X=\Gamma\backslash G$. 
Let $\mathcal{H}$ be the Hecke algebra, as defined in
Section \ref{sec:notation}. It acts on $L^2(X)$. 
Set $Y = \Gamma \backslash G/K$ the associated locally symmetric space,
where $K$ is the a maximal compact subgroup inside $G$. 
$A$ will denote a maximal $\R$-split torus of in $G$ compatible with $K$.

In the special case, let $D/\Q$ be a division algebra of prime degree $d$,
and let $\G$ be the associated projective general linear group,
i.e. the quotient of the group of units in $D$ by its center.
Assume that $\G$ is $\R$-split, \ie $G=\G(\R)\isom\PGL_d(\R)$ (when $d\geq 3$
this is always the case).  For this group let $K$ be the
standard maximal compact subgroup, $A$ the group of diagonal matrices
with positive entries (up to scaling).

Theorem \ref{thm:tubemass} implies:

\begin{thm}
\label{mainthmB} Let $\tilde{\psi}_{n}\in L^{2}(X)$ be
a sequence of $\mathcal{H}$-eigenfunctions on $X$ such that the
associated probability measures $\sigma_{n}:= |\tilde{\psi}_n(x)|^2 dx$
on $X$ converge weak-{*} to an $A$-invariant probability measure
$\sigma_{\infty}$.
Then every \emph{regular} $a\in A$ acts on every
$A$-ergodic component of $\sigma_{\infty}$ with positive entropy.
When $\G$ is associated to a division algebra, the same holds for any
$a\in A\setminus\{1\}$.
\end{thm}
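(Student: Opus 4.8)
The plan is to deduce Theorem~\ref{mainthmB} from the tube-mass bound of Theorem~\ref{thm:tubemass} by the standard entropy-positivity argument: a measure that gives very little mass to small tubes transverse to the $A$-direction cannot be supported, on an ergodic component, by a low-dimensional set, and a suitable quantitative version of this forces positive entropy for the $A$-action. First I would recall the general principle (going back to the Brin--Katok local entropy formula, as used by Einsiedler--Katok--Lindenstrauss and by Bourgain--Lindenstrauss): if $a \in A$ is nontrivial and $\sigma$ is an $a$-invariant ergodic probability measure on $X = \Gamma\backslash G$, then the entropy $h_\sigma(a)$ is bounded below in terms of the rate at which $\sigma$-mass of dynamical Bowen balls decays. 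Concretely, choosing $C$ to be a small compact piece of $Z_G(a)$ — which for regular $a$ is $MA$, and for the division-algebra case may be any Levi $Z_G(a)$ — the sets $x B(C,\varepsilon)$ are, up to bounded distortion, unions of $O(1)$ pieces each comparable to a Bowen $(n,\varepsilon_0)$-ball for the $a$-action with $n \asymp \log(1/\varepsilon)$, since contracting/expanding directions for $a$ are precisely the directions transverse to $Z_G(a)$.

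The key step is then the following passage from Hecke eigenfunctions to general quantum limits. By Theorem~\ref{thm:tubemass}, for every $\mathcal H$-eigenfunction $\psi$ on $X$ we have $\mu_\psi(xB(C,\varepsilon)) \ll \varepsilon^c$ uniformly in $x$, with $c>0$ depending only on $\G$. Given the weak-$*$ convergent sequence $\sigma_n = |\tilde\psi_n|^2 dx$ with $\tilde\psi_n$ an $\mathcal H$-eigenfunction (this is where the equivariance in Theorem~\ref{mainthmA}\eqref{prop:eq} is used — the lifts may be taken to be Hecke eigenfunctions), I would fix a continuous bump function dominating the indicator of $xB(C,\varepsilon)$ and supported in $xB(C,2\varepsilon)$, apply the uniform bound to each $\sigma_n$, and pass to the limit to obtain $\sigma_\infty(xB(C,\varepsilon)) \ll \varepsilon^c$ for all $x$, with the \emph{same} constant. (A minor point: one must cover a fixed compact piece of $X$ — or all of $X$ if it is compact, as in the division-algebra case — by boundedly many such sets and use that the estimate is uniform in $x$; since here $\G$ is $\Q$-anisotropic, $X$ is compact and this is clean.) Thus the quantum limit $\sigma_\infty$ itself satisfies the tube bound, even though it need not be a Hecke eigenmeasure.

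Next I would disintegrate $\sigma_\infty$ over the $\sigma$-algebra of $A$-invariant sets and fix an ergodic component $\sigma$ for the flow of a nontrivial $a \in A$ (for regular $a$ in the split case, or any $a \neq 1$ in the division-algebra case). The tube bound $\sigma_\infty(xB(C,\varepsilon)) \ll \varepsilon^c$ descends to ergodic components only on average, so the correct statement is: for $\sigma_\infty$-a.e.\ ergodic component $\sigma$ and $\sigma$-a.e.\ $x$, $\liminf_{\varepsilon\to 0}\frac{\log \sigma(xB(C,\varepsilon))}{\log\varepsilon} \geq \eta$ for some $\eta>0$. This is obtained by a maximal-inequality / Borel--Cantelli argument along a geometric sequence $\varepsilon_k = 2^{-k}$: the bound on the full measure of $\{x : \sigma(xB(C,\varepsilon_k)) > \varepsilon_k^{c/2}\}$ is $\ll \varepsilon_k^{c/2}$, which is summable, so for a.e.\ component and a.e.\ point the mass of $xB(C,\varepsilon_k)$ is eventually $\leq \varepsilon_k^{c/2}$. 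Combining with the identification of $xB(C,\varepsilon)$ as (boundedly many copies of) a Bowen ball of radius $\asymp \varepsilon$ at time $n \asymp \log(1/\varepsilon)$, the Brin--Katok formula then gives $h_\sigma(a) \gtrsim \eta \cdot (\text{expansion rate of }a) > 0$.

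I expect the main obstacle to be the careful bookkeeping in the second and third steps rather than any deep new idea: (i) verifying that $xB(C,\varepsilon)$ genuinely controls a Bowen ball for $a$ — i.e.\ that $B(C,\varepsilon)$, a Euclidean $\varepsilon$-neighbourhood of a piece of the centralizer $Z_G(a)$, contains a neighbourhood of the form $\{g : \dist(a^j g a^{-j}, Z_G(a)) \leq \varepsilon_0,\ |j|\leq n\}$ with $n \asymp \log(1/\varepsilon)$, which requires comparing the flat metric to the intrinsic dynamics and using that all root spaces outside $\mathfrak{z}_\lieg(a)$ are genuinely expanded or contracted (this is exactly why $a$ must be regular in the split case, and why in the division-algebra case \emph{every} $a\neq 1$ works, since there $Z_G(a)$ is already accounted for and no intermediate behaviour occurs); and (ii) the descent of the tube estimate to ergodic components via the maximal inequality, which is routine but must be stated with the correct quantifiers. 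Everything else — weak-$*$ passage, the equivariant lift, Brin--Katok — is quotable or standard. I would also remark that the positivity of entropy obtained this way is quantitative (a definite lower bound depending only on $\G$), which is exactly the hypothesis needed to feed into the Einsiedler--Katok measure-rigidity theorem in the final deduction of AQUE.
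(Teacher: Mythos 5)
Your proposal is correct and follows essentially the same route as the paper: the paper's proof consists of observing that the uniform tube bound of Theorem \ref{thm:tubemass} passes to the weak-$*$ limit $\sigma_\infty$, and then citing \cite[Sec.\ 8]{Lindenstrauss:AdelicDyn_AQUE} for precisely the Bowen-ball identification, ergodic-decomposition/Borel--Cantelli descent, and Brin--Katok step that you sketch explicitly. The only (harmless) slip is that the equivariance clause of Theorem \ref{mainthmA} is not needed here, since Theorem \ref{mainthmB} already hypothesizes that the $\tilde{\psi}_n$ are $\mathcal{H}$-eigenfunctions; that appeal belongs to the deduction of Theorem \ref{thm:que}.
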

\begin{proof}
This is essentially a rephrasing of Theorem \ref{thm:tubemass}, where
the uniformity of the estimate means it carries over to weak-{*} limits.

For a proof that the bound on measures of tubular neighbourhood of
$Z_G(a)$ implies that $a\in A$ acts with positive entropy
see \cite[Sec.\ 8]{Lindenstrauss:AdelicDyn_AQUE}. While written for
the case of quaternion algebras ($d=2$), that discussion readily
generalizes to our situation by modifying its ``Step 2'' to account
for the action of $a$ on the Lie algebra.
\end{proof}

Using results on measure-rigidity due to Einsiedler and Katok,
this has the following implication for the QUE problem: 

\begin{thm}
\label{thm:que} Let $\G$ be the projectivized unit group of a division
algebra of prime degree, and maintiain the other notations as above.
Let $\left\{ \psi_{n}\right\} _{n=1}^{\infty}\subset L^{2}(Y)$
be a non-degenerate sequence of eigenfunctions for the ring of
$G$-invariant differential operators on $G/K$
(\cf \cite[Sec.\ 3.3]{SilbermanVenkatesh:SQUE_Lift})
which are also eigenforms of the Hecke algebra $\mathcal{H}$
(\cf Section \ref{sec:notation}).
Such $\psi_n$ are also called Hecke-Maass forms.

Then the associated probability measures $\bar{\mu}_{n}$ converge
weak-{*} to the normalized Haar measure on $Y$, and their lifts
$\mu_{n}$ (see Theorem \ref{mainthmA}) converge weak-{*} to the
normalized Haar measure $dx$ on $X=\Gamma\backslash\PGL_{d}(\R)$.
\end{thm}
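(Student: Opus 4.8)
The plan is to assemble Theorem~\ref{thm:que} from the three ingredients advertised in the introduction, essentially as a bookkeeping exercise. First I would invoke Theorem~\ref{mainthmA}: given the non-degenerate sequence $\{\psi_n\}\subset L^2(Y)$ of joint eigenfunctions for the invariant differential operators and the Hecke algebra $\Hecke$, after passing to a subsequence we obtain lifts $\tilde\psi_n\in L^2(X)$ and distributions $\mu_n$ with $\pi_*\mu_n=\bar\mu_n$, such that the measures $\sigma_n=|\tilde\psi_n|^2\,dx$ and $\mu_n$ are asymptotically equal on $\D$, every weak-$*$ limit $\sigma_\infty$ of the $\sigma_n$ is $A$-invariant, and (crucially, via the equivariance clause \eqref{prop:eq} applied to $E=\Hecke$) each $\tilde\psi_n$ may be taken to be an $\Hecke$-eigenfunction with the same eigenvalues as $\psi_n$. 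Thus $\sigma_\infty$ is an $A$-invariant probability measure arising as a quantum limit in the arithmetic sense.

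Next I would feed this into Theorem~\ref{mainthmB}: since $\G$ is the projectivized unit group of a division algebra of prime degree and $\tilde\psi_n$ are $\Hecke$-eigenfunctions whose measures $\sigma_n$ converge weak-$*$ to the $A$-invariant probability measure $\sigma_\infty$, every $a\in A\setminus\{1\}$ acts on every $A$-ergodic component of $\sigma_\infty$ with positive entropy. Then I would apply the measure-rigidity theorem of Einsiedler--Katok (as quoted, in the form established in \cite{EKL:SLn_Rigid} in this setting): an $A$-invariant probability measure on $X=\Gamma\backslash\PGL_d(\R)$, all of whose ergodic components have positive entropy under each nontrivial $a\in A$, and which is invariant under the full diagonal group $A$ of rank $d-1\geq 1$, must be a convex combination of algebraic (homogeneous) measures; because $\Gamma$ comes from a division algebra of \emph{prime} degree, the only intermediate possibility is excluded and $\sigma_\infty$ must equal the Haar measure $dx$ on $X$. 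Here one uses that $X$ is compact (the division algebra is anisotropic), so no mass escapes and $\sigma_\infty$ is genuinely a probability measure.

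Finally I would propagate this back down. Since $\sigma_\infty=dx$ and $\sigma_n(g)-\mu_n(g)\to0$ for all $g\in\D$, the distributions $\mu_n$ converge weak-$*$ to $dx$ on $X$ (first on $\D$, then on all of $C(X)$ by the uniform mass bound and a density argument, using that $X$ is compact). Pushing forward by $\pi\colon X\to Y$ and using $\pi_*\mu_n=\bar\mu_n$ gives $\bar\mu_n\to dy$ weak-$*$ on $Y$. The last step is the standard subsequence argument: we proved convergence only after passing to a subsequence, but since the limit $dy$ is the same for every convergent subsequence (the argument above never used anything about the subsequence beyond non-degeneracy, which is inherited), the full sequence $\bar\mu_n$ converges to $dy$, and likewise $\mu_n\to dx$.

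The main obstacle is not really in this paper at all: it is the correct invocation of the Einsiedler--Katok measure-rigidity input, namely checking that the positive-entropy conclusion of Theorem~\ref{mainthmB} is in exactly the form their theorem consumes (positive entropy of \emph{every} ergodic component under \emph{every} nontrivial element, together with full $A$-invariance), and that the prime-degree hypothesis genuinely rules out all non-Haar algebraic measures on $\Gamma\backslash\PGL_d(\R)$. The remaining steps --- the subsequence extraction, the passage from $\D$ to $C(X)$, the compactness of $X$, and the pushforward to $Y$ --- are routine soft analysis.
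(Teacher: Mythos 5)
Your proposal follows the paper's proof almost exactly: lift via Theorem~\ref{mainthmA} (using the equivariance clause to keep the Hecke eigenproperty), positive entropy via Theorem~\ref{mainthmB}, measure rigidity of Einsiedler--Katok to identify $\sigma_\infty$ with Haar measure, then push down to $Y$ and run the standard subsequence argument. For $d\geq 3$ this is correct and is what the paper does.

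There is one genuine slip: you assert that the rigidity input applies because $A$ has ``rank $d-1\geq 1$.'' For $d=2$ the group $A$ is one-dimensional, and positive entropy of a single diagonalizable flow on $\Gamma\backslash\PGL_2(\R)$ does \emph{not} force the measure to be Haar --- there are many positive-entropy invariant measures for a rank-one geodesic flow. The Einsiedler--Katok theorem (the paper cites \cite[Thm.\ 4.1(iv)]{EinsiedlerKatok:SLn_Rigid}) is genuinely a higher-rank statement; the case $d=2$ of Theorem~\ref{thm:que} is Lindenstrauss's theorem \cite{Lindenstrauss:SL2_QUE}, which needs Hecke recurrence in addition to positive entropy, and the paper simply quotes it and assumes $d\geq 3$ thereafter. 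You should do the same. Two smaller points: the paper also records that $\sigma_\infty$ is invariant under the finite group $Z_K(A)$ (by construction of the lift), which is part of the hypotheses fed to the rigidity theorem and worth stating explicitly; and the entropy conclusion you should carry forward is ``almost every ergodic component,'' which is what the rigidity theorem consumes.
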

\begin{proof}
The case $d=2$ is Lindenstrauss's theorem, and we will thus assume $d\geq3$. 

Passing to a subsequence, let $\psi_{n}\in L^{2}(Y)$
be a non-degenerate sequence of Hecke-Maass forms on $Y$ such that
$\bar{\mu}_{n}\to\bar{\mu}_{\infty}$ weakly.  Passing to a subsequence,
let $\tilde{\psi}_{n}$ and $\sigma_{n}$ be as in Theorem \ref{mainthmA}
such that $\sigma_{n}\to\sigma_{\infty}$ weakly and $\sigma_{\infty}$
lifts $\bar{\mu}_{\infty}$. Then $\sigma_{\infty}$ is a non-degenerate
arithmetic quantum limit on $X$.  By Theorem \ref{mainthmB},
$\sigma_{\infty}$ is an $A$-invariant probability measure on $X$
such that every $a\in A\setminus\{1\}$ acts on almost every $A$-ergodic
component of $\sigma_{\infty}$ with positive entropy.
Remark also that the measure $\sigma_{\infty}$ is invariant under the
finite group $Z_K(A)$, the centralizer of $A$ in $K$, by construction
(see \cite[Remark 1.7, (5)]{SilbermanVenkatesh:SQUE_Lift}). 

Then \cite[Thm.\ 4.1(iv)]{EinsiedlerKatok:SLn_Rigid}
shows that $\sigma_{\infty}$ has a unique ergodic component,
$\mu_{\textrm{Haar}}$.
\end{proof}

Our methods also apply to the case of a split central simple $\Q$-algebra,
that is when $\G(\Q) = \PGL_d(\Q)$.  The result is somewhat weaker, however:

\begin{thm}
\label{thm:que2} Let $G = \PGL_d(\R)$ ($d$ prime), and let $\Gamma < G$
be a lattice of the form \footnote{
  Again, the result is best expressed for an ``adelic'' quotient}
$\PGL_d(\Z) \cap \gamma \PGL_d(\Z)\gamma^{-1}$ for some
$\gamma \in \PGL_d(\Q)$. Let $X=\Gamma \backslash G$, $Y=X/K$
Let $\left\{ \psi_{n}\right\} _{n=1}^{\infty}\subset L^{2}(Y)$
be a non-degenerate sequence of Hecke-Maass forms.

Then the associated probability measures $\bar{\mu}_{n}$ converge
weak-{*} to the a Haar measure on $Y$, and their lifts
$\mu_{n}$ (see Theorem \ref{mainthmA}) converge weak-{*} to
a Haar measure $c dx$ on $X=\Gamma\backslash\PGL_{d}(\R)$,
where $c\in [0,1]$.
\end{thm}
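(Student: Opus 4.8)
The plan is to run the argument proving Theorem~\ref{thm:que} essentially verbatim; the single genuine change is forced by the non-compactness of $X=\Gamma\backslash\PGL_d(\R)$, where the microlocal lift may leak mass into the cusp, and this is exactly what produces the constant $c\in[0,1]$ (possibly $c=0$).

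First I would fix an arbitrary weak-$*$ convergent subsequence $\bar\mu_n\to\bar\mu_\infty$ and apply Theorem~\ref{mainthmA} (after passing to a further subsequence) to obtain lifts $\tilde\psi_n$ and measures $\sigma_n:=|\tilde\psi_n|^2\,dx$ converging weak-$*$ to a measure $\sigma_\infty$ on $X$. Since the $\psi_n$ are Hecke eigenfunctions and $\mathcal H$ acts on $C^\infty(X)$ by bounded operators commuting with $G$, part~(\ref{prop:eq}) of Theorem~\ref{mainthmA} applies, so $\sigma_\infty$ is $A$-invariant, invariant under $Z_K(A)$ by construction, satisfies $\pi_*\sigma_\infty=\bar\mu_\infty$, and is a non-degenerate \emph{arithmetic} quantum limit --- in particular it is recurrent under the Hecke algebra in the sense of \cite{SilbermanVenkatesh:SQUE_Lift}. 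Put $c:=\sigma_\infty(X)=\bar\mu_\infty(Y)\in[0,1]$. If $c=0$ then $\bar\mu_\infty=0=0\cdot dy$ and we are done, so assume $c>0$ and set $\nu:=c^{-1}\sigma_\infty$, a probability measure inheriting all of the above properties.

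For the entropy input I would observe that, since $\G(\Q)=\PGL_d(\Q)$ means $\G=\PGL_d$ is $\R$-split, case~(1) of Theorem~\ref{thm:tubemass} (with $C\subset MA$) gives a uniform bound $\mu_{\psi}(xB(C,\varepsilon))\ll\varepsilon^{c_0}$ valid for all $n$ and all $x\in\Omega_\infty$; this passes to the weak-$*$ limit and hence to $\nu$, and then, exactly as in the proof of Theorem~\ref{mainthmB} via \cite[Sec.~8]{Lindenstrauss:AdelicDyn_AQUE}, it forces every regular $a\in A$ to act on almost every $A$-ergodic component of $\nu$ with positive entropy. Feeding this, together with the $Z_K(A)$-invariance and the Hecke recurrence of $\nu$, into the measure-rigidity theorem of Einsiedler and Katok for split simple groups, \cite[Thm.~4.1(iv)]{EinsiedlerKatok:SLn_Rigid}, identifies $\nu$ as the Haar measure $dx$. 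Therefore $\sigma_\infty=c\,dx$, and projecting through $\pi$ using Theorem~\ref{mainthmA}(\ref{thmA:claim1}) gives $\bar\mu_\infty=c\,dy$; since every weak-$*$ limit point of $(\bar\mu_n)$ arises in this way, the theorem follows.

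The main obstacle --- and the reason the statement is weaker than Theorem~\ref{thm:que} --- is the escape of mass: we have no a~priori lower bound on $c$, so neither $c<1$ nor $c=0$ can be excluded by the present methods. A second, more structural point requiring care is that the positive-entropy property for \emph{regular} elements --- which is all Theorem~\ref{thm:tubemass} supplies when $\G$ is split --- does not by itself rule out $A$-ergodic components of $\nu$ supported on periodic orbits of proper block-diagonal Levi subgroups: such orbits genuinely carry $A$-invariant measures having positive entropy under every regular element. It is precisely the Hecke recurrence of the microlocal lift (and not merely its $A$-invariance) that eliminates these components, so the one non-routine verification in carrying out this plan is that the Einsiedler--Katok classification, applied in this split \emph{isotropic} setting, accepts the recurrence statement produced by Theorem~\ref{mainthmA} and \cite{SilbermanVenkatesh:SQUE_Lift}.
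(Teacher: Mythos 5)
Your overall architecture (pass to a weak-$*$ limit of lifts, normalize, get positive entropy for regular $a\in A$ from Theorem \ref{thm:tubemass} case (1), apply measure rigidity, and accept that escape of mass leaves $c\in[0,1]$ undetermined) matches the paper's proof. The genuine gap is in the measure-rigidity step. You invoke \cite[Thm.~4.1(iv)]{EinsiedlerKatok:SLn_Rigid}, which is the theorem the paper uses for the \emph{anisotropic} (division-algebra, cocompact) case of Theorem \ref{thm:que}; it is not the right input for $X=\Gamma\backslash\PGL_d(\R)$ with $\Gamma$ commensurable with $\PGL_d(\Z)$. In the split case the classification of positive-entropy $A$-invariant measures only yields that $\mu$ is a combination of \emph{algebraic} measures, and one must separately exclude the components supported on closed orbits of intermediate $\Q$-subgroups. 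The paper does this by combining the measure rigidity of \cite{EKL:SLn_Rigid} with the orbit classification results of \cite{LindenstraussWeiss:SLn_Algebraic}, and it is precisely here that the primality of $d$ is used (to restrict the possible intermediate $\Q$-subgroups with finite-volume closed orbits). Your statement of the theorem never uses $d$ prime anywhere, which is a sign that an ingredient is missing.

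Your proposed substitute --- that ``Hecke recurrence of the microlocal lift'' eliminates the intermediate components --- is not the mechanism the paper uses and is not supported by what is actually proved here: Theorem \ref{mainthmA} as stated supplies $A$-invariance, $Z_K(A)$-invariance and Hecke eigenfunction equivariance, but no recurrence statement, and the Einsiedler--Katok/EKL theorems (unlike Lindenstrauss's $\SL_2$ theorem) do not take a recurrence hypothesis as input. You correctly flag this as ``the one non-routine verification,'' but that verification is exactly the content that is missing; replacing it by the EKL-plus-Lindenstrauss--Weiss argument (with $d$ prime) closes the gap and recovers the paper's proof.
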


\begin{proof}
Let $\G = \PGL_d/\Q$.
For any prime $p$ let $\mathcal{O}_p$ be the (``Eichler'') order
$M_n(\Z_p) \cap \gamma M_n(\Z_p)\gamma^{-1}$ of $M_n(\Q_p)$.
Let $K_p = \mathcal{O}_p^\times$, $\Kf = \prod_p K_p$.  Then
$\Gamma = \PGL_d(\Q)\cap \Kf$, so that $X = \GQ \backslash \GA / \Kf$.

Passing to a subsequence, let $\mu$ be a weak-* limit of a sequence of lifts.
Theorem \ref{mainthmB} shows that there exist $a\in A$ which act with
positive entropy on almost every $A$-ergodic component of $\mu$.
The measure rigidity results of \cite{EKL:SLn_Rigid} together
with the orbit classification results of 
\cite{LindenstraussWeiss:SLn_Algebraic} (we use here the fact that $d$ is
prime) show that $\mu$ is a Haar measure on $X$.  Since $X$ is not compact,
this method does not control the total mass $c$ of $\mu$.
\end{proof}

\subsection{Remarks on generalizations} \label{Remarksection}

\subsubsection{Class number one} \label{Eichler}
The assumption imposed that $G$ act with a single orbit in
$\G(\Q) \backslash \GA /\Kf$ is, as we remarked, cosmetic.
In general, if we remove this assumption, 
one would still know -- making analogous definitions -- that quantum limits
remain $G$-invariant.  However, this would not quite be a complete answer
since the space of $G$-invariant measures on $\GQ\bs\GA/\Kf$ is now finite
dimensional, and we would not know the relative measures of the different
components. 

\subsubsection{Nondegeneracy}  The second author has obtained a version
of Theorem \ref{mainthmA} without the non-degeneracy assumption, see
\cite{Silberman:SQUE_DegenLift_preprint}.  In that case the lifts are
asymptotically invariant under (non-trivial) subgroups of $A$.
The bounds on the mass of tubes obtained in this paper are, at their
foundation, purely statements about Hecke eigenfunctions, and thus carry
over to degenerate limits.  However, the interpretation of these bounds
as lower bounds of the \emph{entropy} only applies to tubes associated
to elements $a\in A$ by which the measure is invariant.

In the degenerate case, a-priori one only has invariance by \emph{singular}
elements $a\in A$.  Thus our methods only show that those elements act
with positive entropy in the case of division algebras.

An analogue of Theorem \ref{thm:que} would accordingly follow from
a result classifying measures on $X$ which are only invariant by a
(potentially one-dimensional) subgroup of the Cartan subgroup, assuming
a Hecke recurrence condition a-la \cite{Lindenstrauss:SL2_QUE}.
An advance in this direction is necessary in order to show that
every sequence of Hecke-Maass forms is equidistributed; in the rest of the
remarks we only consider the case of non-degenerate limits.

It is worth noting that the mass of tubes correpsonding to the flow of a
regular element would also be small, but since the measure is not invariant
by the regular element it is not clear how to incorporate this information
into the measure-classification result.

\subsubsection{Escape of mass.}
When the quotient $X$ is not compact (for example, in the split case of
Theorem \ref{thm:que2}), there is an
additional potential obstruction to equidistribution: weak-* limits are
not necessarily probability measures -- they may even be the zero measure.
This possibility is known as ``escape-of-mass''.

In the case of congruence lattices in $\SL_2(\Z)$, escape-of-mass was
ruled out by Soundararajan \cite{Sound:EscapeOfMass}.  This was generalized
to the congruence lattices in $\SL_2(\mathcal{O})$, $\mathcal{O}$ the ring of
integers of a number field, in the M.Sc.\ Thesis \cite{Zaman:EscapeThesis}.
The extension to higher-rank groups is the subject of current research.

In the particular case of congruence lattices in
$\GL_d(\Z)$ the normalization of the measure is already controlled by
the degenerate Eisenstein series.  Hence a sub-convexity result for
the Rankin--Selberg $L$-function would control the escape, just as in the
better-known case of $\GL_{2}$ (though, notably, Soudararajan's argument
does not rely on an L-function bound). In fact, such a sub-convexity result
would also prove that the limits have \emph{positive entropy} (this
is demonstrated in \cite{ELMV3}).

In the rest of our remarks we ignore the issue as well;
the reader may assume the group $\G$ to be anisotropic.

\subsubsection{The case when $\G$ is not associated to a division algebra.}
We expect the techniques developed for the proof of Theorem \ref{thm:que}
will generalize at least to some other locally symmetric spaces, the
case of division algebras of prime degree being the simplest; but there are
considerable obstacles to obtaining a theorem for \emph{any} arithmetic locally
symmetric space at present.  A brief discussion of some of these difficulties
follows. 

First, the intersection patters of Hecke translates will be controlled
by subgroups more complicated than tori.  Except for the simplest case,
in general the best one can hope for is that intersections
be controlled by Levi subgroups defined over $\Q$.  Lemma \ref{diophantine2}
already establishes this for unit groups of semisimple $\Q$-algebras.
Such subgroups will have \emph{exponential} volume growth
(say in terms of their orbit on the building of $\G(\Q_p)$), compared with
the polynomial behaviour of tori.  Even the purely local question of
whether an eigenfunction on a building can concentreate appreciably
on the orbit of such a large subgroup is difficult.
To see where such an issue can arise note that when $G$ is not $\R$-split,
even the centralizer of the maximal $\R$-split torus is not a torus.

Dealing with intersections created by larger $\Q$-subgroups
is essential for a more fundamental reason.
The best possible outcome of the type of measure classification results one
would use here (for state of the art see
\cite{EinsiedlerLindenstrauss:GeneralMaxSplit_preprint})
is that the measure is, in some sense, \emph{algebraic}: it is a linear
combination of measures supported on orbits of subgroups.
From this point of view, to show that the limit measure is the $G$-invariant
measure should at least require showing that the mass of orbits of these
subgroups is zero a-la Rudnick--Sarnak.  In our terms, this means
showing that the mass concentrated in an an $\epsilon$-neighbourhood of the
orbit goes to zero with $\epsilon$ even if it is not necessary
to achieve power-law decay of the mass (\ie positive entropy).
In fact, for this reason it is hard to imagine an application of the current
techniques that would rule out these intermediate measures without also
estalishing that all elements of $A$ act with positive entropy.

\appendix
\section{Proof of Lemma \ref{Plancherel}: how to construct a higher
rank amplifier} 

To readers familiar with the usage of ``amplification'' in analytic number
theory (as represented, for instance, in the work of Duke, Friedlander and
Iwaniec): the Lemma \ref{Plancherel} in effect represents a way to construct
an amplifier in higher rank.   

Let $\G$ be a semisimple algebraic group over $\Q$, $\rho\colon\G\to\SL_N$
an embedding.  For each prime $p$ let $G_p := \G(\Q_p)$.  There is
$p_0$ such that for $p>p_0$, $K_p = \rho^{-1}(\SL_N(\Z_p))$ is a special
maximal compact subgroup of the unramified group $G_p$.

We will allow the implicit constant in the symbols $\gg,\ll$ to depend
on $N$ (and hence, also on dimension of $\G$), but not on anything else. 
\begin{lem} \label{Plancherel}
Possibly inreasing $p_0$ there exist integers $\ell,\ell'$ depending only
on $N$ such that for any $p>p_0$ and for any character $\Lambda$ of the Hecke
algebra of $G_p$ with respect to $K_p$, there is a $K_p$-bi-invariant function
$h_p$ with the following properties:
\begin{enumerate}
\item Its support satisfies
$ p^{\ell'} \gg  \# \supp(h_p) \gg  p$, where we think of $h_p$ as a function on $G_p/K_p$; 
\item $|h_p| \in \{0,1\}$; 
\item $\Lambda(h_p)$ is positive and $\Lambda(h_p) \gg  \# \supp(h_p)^{1/2}$
\item For any $g \in \supp(h_p)$, the denominator of $\rho(g)$
is $\ll_N p^{\ell}$.
\end{enumerate}
\end{lem}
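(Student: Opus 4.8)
The plan is to take $h_p$ to be a single Hecke operator, scaled by a unimodular complex number, attached to a dominant cocharacter drawn from a fixed finite list, and to extract the lower bound on $\Lambda(h_p)$ from a ``norm identity'' in the Hecke algebra $\mathcal H_p$ of $G_p$ with respect to $K_p$ --- the higher-rank analogue of the classical Hecke relation that makes Duke--Friedlander--Iwaniec amplification work. The key feature is that such an identity holds with \emph{real nonnegative} structure constants, so that applying the (entirely arbitrary) character $\Lambda$ and using the triangle inequality forces at least one operator in a fixed finite list to have large $\Lambda$-value, with no unitarity of $\Lambda$ needed.

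First I would recall the structure of $\mathcal H_p$, valid once $p > p_0$ (so $G_p$ is unramified and $K_p$ special maximal). For a dominant cocharacter $\mu$ of a maximal split torus of $G_p$, put $T_\mu := \mathbf 1_{K_p\mu(p)K_p} \in \mathcal H_p$, where $\mu(p)$ is the cocharacter evaluated at the uniformizer $p$, and $n_\mu := \#\supp(T_\mu)$ with $T_\mu$ viewed as a function on $G_p/K_p$. Write $d_\mu$ for the pairing of $\mu$ with the sum of the positive roots of $\G$; then $n_\mu$ is a polynomial in $p$ of degree $d_\mu$ with positive leading coefficient, $n_0 = 1$, and $d_\mu \geq 1$ for $\mu \neq 0$. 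The $T_\mu$ ($\mu$ dominant) form a $\Z$-basis of $\mathcal H_p$, with unit $T_0 = \mathbf 1_{K_p}$ (so $\Lambda(T_0) = 1$ for every character $\Lambda$), and the structure constants in $T_\mu * T_{\mu'} = \sum_\nu m^\nu_{\mu\mu'}(p)\, T_\nu$ lie in $\Z_{\geq 0}[p]$, vanish unless $\nu$ is dominant with $\nu \preceq \mu + \mu'$, and --- the one non-formal input --- have $p$-degree at most $\tfrac{1}{2}\bigl(d_\mu + d_{\mu'} - d_\nu\bigr)$ (Macdonald's formula, or the $p^{1/2}$ in the Satake isomorphism; geometrically, a codimension bound for intersections of affine Schubert cells). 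Let $w_0$ be the longest Weyl element and $\mu^* := -w_0\mu$ the dominant representative of $-\mu$; then the convolution-adjoint of $T_\mu$ is $T_{\mu^*}$, and normalizing Haar measure so that $K_p$ has volume one, $(T_\mu * T_{\mu^*})(e) = \int T_\mu(g)\, T_{\mu^*}(g^{-1})\, dg = \vol(K_p\mu(p)K_p) = n_\mu$ (using $T_{\mu^*}(g^{-1}) = T_\mu(g)$). Thus $n_\mu$ is the $T_0$-coefficient of $T_\mu * T_{\mu^*}$, i.e.
\[
T_\mu * T_{\mu^*} \;=\; n_\mu\, \mathbf 1_{K_p} \;+\; \sum_{0 \neq \nu \preceq \mu + \mu^*} m^\nu_{\mu\mu^*}(p)\, T_\nu .
\]
Now fix once and for all one nonzero dominant cocharacter $\mu$ (say a fundamental coweight) and let $\mathcal S$ be the finite set of dominant cocharacters $\nu$ with $0 \prec \nu \preceq \mu + \mu^*$; then $\mu, \mu^* \in \mathcal S$, and $\mathcal S$ depends only on the root datum of $\G$.

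Applying $\Lambda$ to the displayed identity, using $\Lambda(T_0) = 1$ and the triangle inequality yields, for every character $\Lambda$,
\[
n_\mu \;\leq\; |\Lambda(T_\mu)|\,|\Lambda(T_{\mu^*})| \;+\; \sum_{0 \neq \nu \preceq \mu + \mu^*} m^\nu_{\mu\mu^*}(p)\, |\Lambda(T_\nu)| .
\]
By the degree bound, each of the $O(1)$ terms satisfies $m^\nu_{\mu\mu^*}(p)\, n_\nu^{1/2} \ll p^{\, d_\mu - d_\nu/2} \cdot p^{\, d_\nu/2} = p^{d_\mu} \asymp n_\mu$ (using $d_{\mu^*} = d_\mu$), with implied constant depending only on $\mathcal S$. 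Hence, for a suitable $\varepsilon_0 > 0$ depending only on $\mathcal S$, it cannot be that $|\Lambda(T_\xi)| < \varepsilon_0\, n_\xi^{1/2}$ for all $\xi \in \mathcal S$ (that would push the right-hand side below $n_\mu$); so some $\xi = \xi(\Lambda) \in \mathcal S$ has $|\Lambda(T_\xi)| \geq \varepsilon_0\, n_\xi^{1/2}$. Put $h_p := \zeta\, T_\xi$, with $\zeta \in \C$, $|\zeta| = 1$, chosen so that $\zeta\, \Lambda(T_\xi) = |\Lambda(T_\xi)| > 0$. Then $h_p$ is $K_p$-bi-invariant; $|h_p| = \mathbf 1_{K_p\xi(p)K_p}$ is $\{0,1\}$-valued, which is (2); $\Lambda(h_p) = |\Lambda(T_\xi)| > 0$ with $\Lambda(h_p) \geq \varepsilon_0\, \#\supp(h_p)^{1/2}$, which is (3); $\#\supp(h_p) = n_\xi$, which for $\xi \neq 0$ is $\gg p$ (after enlarging $p_0$ so the top term of $n_\xi$ dominates) and $\ll p^{\ell'}$ with $\ell' := \max_{\xi \in \mathcal S} d_\xi$, which is (1); and every $g \in K_p\xi(p)K_p$ equals $k_1\, \xi(p)\, k_2$ with $\rho(k_i) \in \SL_N(\Z_p)$, so by submultiplicativity of $\denom$ and $\denom(\rho(k_i)) = 1$ one gets $\denom(\rho(g)) \leq \denom(\rho(\xi(p))) \leq p^{\ell}$ with $\ell := \max_{\xi \in \mathcal S} \ord_p \denom(\rho(\xi(p)))$, which is (4). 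The integers $\ell, \ell'$ depend only on $\mathcal S$ and $\rho$, hence only on $N$ in the convention of this appendix.

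The step I expect to be the real obstacle is securing the degree bound $\deg_p m^\nu_{\mu\mu^*} \leq \tfrac{1}{2}(d_\mu + d_{\mu^*} - d_\nu)$ with the correct factor $\tfrac{1}{2}$: the crude estimate $m^\nu_{\mu\mu^*} \ll n_\mu n_{\mu^*}/n_\nu$, obtained by comparing total masses of double cosets, is too large by a factor of order $n_\nu^{1/2}$ and would make the error term in the amplification inequality exceed $n_\mu$. The factor $\tfrac{1}{2}$ is precisely the content of the $p^{1/2}$-normalization in the Satake isomorphism --- equivalently, of the parity and support properties of affine Schubert varieties --- and it is the one place where structural input about $p$-adic groups beyond bare bookkeeping is needed; everything else (the norm identity, the triangle inequality, the verification of (1)--(4)) is routine.
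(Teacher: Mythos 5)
Your argument is correct, and it reaches the conclusion by a genuinely different route from the one in the appendix. The paper works on the \emph{spectral} side: it builds a trigonometric polynomial $\hat{k}_1$ peaked at $\nu_0$ (using the compactness inequality \eqref{compactness} to guarantee the peak value $M\gg 1$ for an arbitrary, possibly non-tempered, character), controls $\|k_1\|_{L^2}$ via the Plancherel identity \eqref{l2norm}, removes the identity contribution using the estimate ``$\mu_p=\mu_\infty+O(p^{-1/2})$'' for the Plancherel density, and only then localizes the support with the Paley--Wiener theorem before pigeonholing onto a single double coset. You work on the \emph{convolution} side: the identity $T_\mu * T_{\mu^*}=n_\mu \mathbf 1_{K_p}+\sum_{\nu\neq 0}m^{\nu}_{\mu\mu^*}T_\nu$, together with the Satake-type degree bound $\deg_p m^{\nu}_{\mu\mu^*}\leq\frac12(d_\mu+d_{\mu^*}-d_\nu)$, replaces all of that. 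The two non-formal inputs are essentially the same fact from Macdonald's theory seen from two sides (the $O(p^{-1/2})$ error in the Plancherel density versus the $p^{1/2}$-normalization of the Satake transform), so neither proof is ``cheaper'' in depth; but your version is closer to the classical Duke--Friedlander--Iwaniec amplifier, dispenses with Paley--Wiener and with the compactness lemma, and makes it transparent that only the algebra-homomorphism property $\Lambda(\mathbf 1_{K_p})=1$ is used, with no unitarity or temperedness of $\Lambda$. Two small points to tighten: a fundamental coweight need not lie in $X_*(\mathbf A)$ for general semisimple $\G$ (take instead any fixed nonzero dominant element of $X_*$, e.g.\ a dominant coroot --- the argument is unchanged, and $d_\xi\geq 1$ for every nonzero dominant $\xi$ still gives $\#\supp(h_p)\gg p$); and the degree bound must be invoked uniformly over the finitely many relative root data that occur as $p$ varies (for non-split unramified $G_p$ the $m^\nu_{\mu\mu^*}$ are polynomials in $p$ only after fixing the Frobenius action), which is harmless since there are $O_N(1)$ cases. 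With the structure-constant bound quoted from Macdonald, exactly as the paper quotes the Plancherel density, your proof is complete.
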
 

\subsection{Notation on $p$-adic groups.} \label{notationonpadic}
 
We shall use certain standard properties of semisimple algbraic groups over
$p$-adic fields.  Standard references are \cite{Tits:BuildSurv}
and \cite{Cartier:padicGpSurv}.

Increasing $p_0$ we may suppose that $G_p$ quasi-split and unramified,
and that $K_p$ hyperspecial.  Let $\mathbf{A}$ be a maximal $\Q_p$-split
torus in $G_p$ so that the corresponding apartment in the building of
$G_p$ contains the point fixed by $K_p$, and let $A_p = \mathbf{A}_p(\Q_p)$
be the corresponding subgroup of $G_p$.

Let $X_* = \mathrm{Hom}(\mathbb{G}_m, \mathbf{A})$ and
$X^* = \mathrm{Hom}(\mathbf{A}, \G_m)$.
Let $\Phi \subset X^*$ be the set of roots for the action of $\mathbf{A}$
on the Lie algebra of $\G$.  We note that, as $p$ varies, there will be at
most finitely many distinct root systems.  In particular, our bounds
may depend on $\Phi$.

Fix a positive system of roots for $A_p$, let $N_p$ be the subgroup
corresponding to all the positive roots. We have Iwasawa decomposition
$G_p = N_p . A_p . K_p$.  Let $\delta: A_p \rightarrow \R^{\times}$
be the character corresponding to the half-sum of positive roots,
composed with $\|\cdot\|_p$ on $\Q_p$. 

Let $\mathfrak{a} := A_p/ (A_p \cap K_p)$, a free abelian group of rank
equal to the rank of $\G(\Q_p)$.  Then $\mathfrak{a}$ is identified with
$X_*$: for, given $a \in \mathfrak{a}$, there exists a unique homomorphism
$\theta: \mathbb{G}_m \rightarrow \mathbf{A}_p$ so that
$\theta(p)$ and $a$ lie in the same $A_p \cap K_p$ coset.   

Next, let $V = X_*\otimes_\Z \R$, $V^* = X^* \otimes_\Z \R$,
and $V^*_\C = V^*\otimes_\R \C$.  Then to any $\nu \in V^*_\C$, and any
$\theta \in X_*$ with $a = \theta(p)$ we set
$a^{\nu} = p^{\left<\theta,\nu\right>}$ with the obvious pairing.  In
particular this gives an identification between the unramified characters of
$A_p$ and the torus $\mathfrak{a}_\temp^* = i V^* / (2\pi i \log p X^*)$.

Let $W$ be the Weyl group of $A_p$.  It acts on $\mathfrak{a}$. 
Moreover, we fix a $W$-invariant inner product on $\mathfrak{a} \otimes \R$
in the following way: the elements of $\Phi$, considered as belonging
to $\Hom(\mathfrak{a},\Z)$, define elements of a root system.  We require
the longest root for each simple factor of $\G$ to have length $1$.
This uniquely normalizes a $W$-invariant inner product on the dual to
$\mathfrak{a} \otimes \R$, so also on $\mathfrak{a} \otimes \R$. 

This normalization has the following property:  if $\alpha \in \Phi$
is any root and $a \in \mathfrak{a}$, then ``$|\alpha(a)|$'' -- we implicitly
identify $a$ with an element of $X_*$ -- is bounded above by $p^{\|a\|}$. 

Finally, let $M_p = Z_{K_p}(A_p)$ so that $N_p A_p M_p$ is a Borel subgroup
of $G_p$.

\subsection{Plancherel formula}

To any character $\nu: \mathfrak{a} \rightarrow \C^{\times}$ (parametrized
by an element of $V^* / (2\pi i \log p X^*)$), we associate
the spherical representation\footnote{Recall a spherical (irreducible) representation of $G_p$ is one that possesses a one dimensional space
of $K_p$-invariants}
$\pi(\nu)$ of $G_p$ obtained by extending $\nu \delta$ to
$N_p A_p M_p$ trivially on $N_p M_p$, inducing to $G_p$, and taking the
unique spherical subquotient. 

For any $K_p$-bi-invariant function $k$ on $G_p$, let $\hat{k}(\nu)$
be the scalar by which $k$ acts on the spherical vector in $\pi(\nu)$.
There is a unique $K_p$-bi-invariant function $\Xi_{\nu}$ on $G_p$
(the spherical function with parameter $\nu$) so that:
\begin{equation} \label{inverse}
\hat{k}(\nu) = \int_{g \in G_p} k(g) \Xi_{\nu}(g) dg, \ \ \
 k(g) = \int_{\nu\in\mathfrak{a}_{\temp}^*} \hat{k}(\nu) \Xi_{\nu}(g) d\mu(\nu).
\end{equation} 
where the first integral is taken \wrt the Haar measure that assigns
mass $1$ to $K_p$, and the second integral is taken \wrt the Plancherel
measure $\mu$ on $\mathfrak{a}_{\temp}^*$.
In our normalization, $\mu$ is a probability measure. 

The map $k \mapsto \hat{k}(\nu)$ is an isomorphism between the space of
compactly supported, $K_p$-bi-invariant functions on $\G(\Q_p)$, and the
space of $W$-invariant ``trigonometric polynomials'' on $\mathfrak{a}$;
here ``trigonometric polynomial'' means ``finite linear combination of
characters.'' Also $k \mapsto \hat{k}$ is an isometry:
\begin{equation} \label{l2norm}
\int_{\G(\Q_p)} |k(g)|^2 dg = \int_{\nu \in \mathfrak{a}_{\temp}^*} 
                                 |\hat{k}(\nu)|^2 d\mu(\nu)
\end{equation}

The explicit form of the Plancherel measure is known
\cite{Macdonald:SphericalFunctionsBook}.  From it we extract the following
fact: ``$\mu_p = \mu_{\infty} +O(p^{-1/2})$.''
More precisely, there exists a measure $\mu_{\infty}$ on 
$V^* / 2\pi X^*$ (whose Fourier transform is supported in
$\{X \in \mathfrak{a}: \|X\| \leq 3\}$) such that the difference
$\mu_p - \mu_{\infty}$ is a signed measure represented by a
function of supremum norm $O(p^{-1/2})$.  Here to consider $\mu_\infty$
as a measure on $\mathfrak{a}_\temp^*$ we identify with the our reference
torus $V^* / 2\pi X^*$ via rescaling by $\log p$.

\subsection{The Paley-Wiener theorem}

Recall our fixed $W$-invariant inner product on $\mathfrak{a}$.
The Paley-Wiener theorem asserts that under the transform $k \mapsto \hat{k}$, 
the preimage of characters of $\mathfrak{a}_{\temp}^*$ supported in
$\{X \in \mathfrak{a}: \|X\| \leq R\}$
is contained in functions supported in
$K_p . \{X \in \mathfrak{a}: \|X\| \leq R\}  . K_p$

Let us briefly sketch the proof.  Let $\mathfrak{a}^+$ be the
closed positive Weyl chamber within $\mathfrak{a}$. 
For $\alpha, \beta \in \mathfrak{a}^+$, so that $\nu \mapsto \nu(\beta)$
occurs with a nonzero coefficient in $\widehat{K_p \alpha K_p}$,
then, necessarily, $\alpha - \beta$ belongs to the dual cone to
$\mathfrak{a}^+$.

Now, choose $\alpha \in \mathfrak{a}^+$ so that $k(K_p \alpha K_p) \neq 0$
and $\|\alpha\|$ is maximal subject to that restriction.
We claim that $\nu \mapsto \nu(\alpha)$ necessarily occurs in $\hat{k}$.
For, in view of the remarks above, if this were not the case there must
exist $K_p \beta K_p$ in the support of $k$,
with $\beta \in \mathfrak{a}^+$ and $\|\alpha\| < \|\beta\|$. This is
a contradiction.

\subsection{Proof of the amplification lemma}
Let $\nu_0 \in \mathfrak{a}^*$ be the parameter of the character
$\Lambda$ of the Hecke algebra specified in the Lemma.  We do not, of course,
assume that $\nu_0 \in \mathfrak{a}_{\temp}^*$). 

Take any $a \in \mathfrak{a}$ with $a$ not in the support of the Fourier
transform of $\mu_{\infty}$, but $\|a\|$ reasonably small. For example,
we could take $a$ to be twice the coroot associated to any root of maximal
length; then $\|a\| = 4$.  Take $R = |W| \| a\|$. 
Construct the function $k_1$ with spherical transform
\begin{equation} \label{elf}
\hat{k}_1 (\nu) = \sum_{|j| \leq |W|} 
        \overline{ \sum_{w \in W} w \nu_0(j a)}  \sum_{w \in W}   w\nu( ja )
\end{equation} 

Note that if $\alpha_1, \dots, \alpha_m$ are any
nonzero complex numbers, then by a simple compactness argument.
\begin{equation} \label{compactness}
\max_{j \neq 0, |j| \leq m} \left| \alpha_1^j + \dots + \alpha_m^j \right|
             \geq c(m) > 0
\end{equation} 

So $M := \sum_{|j|\leq|W|} \left| \sum_{w \in W} w\nu_0(j a) \right|^2 \gg 1$.
We have $\sup_{\nu \in \mathfrak{a}_{\temp}^*} |\hat{k}_{1}(\nu)| \ll M^{1/2}$. 
Thus, by \eqref{l2norm},
$\|k_{1}\|_{L^2}^2 \ll  M$; by definition, $\hat{k}_{1} (\nu_0) = M$;
and from the explicit form of Plancherel measure and \eqref{inverse},
 $|k_{1}(1)| = O(M^{1/2} p^{-1/2})$. 

Put $k = k_{1} - k_{1}(1) 1_{K_p}$. Then $k(1) =0$ and  --
if we suppose that the residue field size, $p$, is sufficiently large,
as we may --  $|\hat{k}(\nu_0)| \gg  \|k\|_{L^2}$.

By the Paley-Wiener theorem, $k$ is supported in
$\bigcup_{|a| \leq R} K_p a K_p$.  The number of $K_p$-double cosets in this
set is equal to the number of $a \in \mathfrak{a}$ with $|a| \leq R$,
which is, in turn $O(1)$. ($a$ is completely determined
by the value of all the roots on it; but the number of roots is $O(1)$
and each value is an integer $\leq R$).  We conclude that there is
$|a| \leq R$ so that 
\begin{equation} \label{one}
|\widehat{1_{K_p a K_p}} (\nu_0) |^2 \gg \int_{K_p a K_p} dg.
\end{equation} 
On the other hand, it is known (\cite[Section 3.5]{Cartier:padicGpSurv}) that
\begin{equation} \label{two}
 \int_{K_p a K_p} dg  \asymp \delta(a)^2
\end{equation}
 the notation $\asymp$ meaning that the quotient is bounded above and below,
at least for $p \geq p_0(\dim \G)$.  Since $\delta$ is the half-sum of
positive roots, and each root has length $\leq 1$, we see that
$p \leq \delta(a)^2 \leq p^{\dim(\G) R}.$
(\cf the end of \S \ref{notationonpadic}.)
 
We take $h_p$ to be the multiple $1_{K_p a_i K_p}$ by a suitable complex
number of absolute value $1$, so that $\hat{h}_p(\nu_0)$ is positive.
The first three assertions of the lemma follow from remarks already made. 

We now turn to establishing the necessary bounds on the denominator of
$\rho(g)$.  Let $\Phi_{\rho} \subset X^*$ be the weights of the representation
of $\rho$ with reference to $\mathbf{A}$; for each $\alpha \in \Phi_{\rho}$,
let $V(\alpha)$ be the weight space. We require two preliminary observations:

Firstly, $\sup_{\alpha \in \Phi_{\rho}} |\alpha| \leq N$.  To verify
this we may suppose that the representation $\rho$ is irreducible.
Because $\G$ is semisimple, we may choose $w \in W$ so that
$\|w \alpha - \alpha\| \geq \|\alpha\|$.  Call two elements of $X^*$
\emph{connected} if they differ by an element of $\Phi$. 
There is a sequence $\alpha = \alpha_0, \alpha_1, \dots, \alpha_{r} =w \alpha$
where $\alpha_i, \alpha_{i+1}$ are connected and
$\alpha_i \in \Phi_{\rho}$ for all $i$. 
So $\dim(V) \geq r +1 \geq \|w \alpha -\alpha\| \geq \|\alpha\|$. 

Secondly, $\Z_p^N = \oplus_{\alpha}  \Z_p^N \cap V(\alpha)$ so long as 
the restriction map from characters of
$X^*$ to $\Hom(A_p \cap K_p, \Z_p^{\times})$ is injective.
For, this being the case, the spaces $\Z_p^N \cap V(\alpha)$ are
characterized as the ``eigenspaces'' of a prime-to-$p$ finite group
acting on $\Z_p^N$.  This is so, in particular, if $p \geq p_0(N)$. 

Combining these remarks, we see at once that the denominator of $\rho(a)$
is $\leq p^{N R}$ when $a \in A_p$ projects to the ball of radius $\leq R$
in $\mathfrak{a}$.  The desired bound on denominators follow. \qed

\bibliographystyle{amsplain}
\bibliography{aut_forms,ergodic_theory,lie_gps,que}

\end{document}